\documentclass[11pt]{article}
\usepackage{graphicx} 
\usepackage{booktabs} 
\usepackage{tikz}
\usetikzlibrary{arrows.meta,positioning}
\usepackage{amsmath,amssymb,amsthm,enumerate}
\usepackage{tabularx,array}
\usepackage{xcolor}
\usepackage{float}
\usepackage{bm}
\usepackage[font=small]{caption}
\usepackage{subcaption}
\usepackage{algorithm}
\usepackage{algorithmic}
\usepackage{mathrsfs}
\usepackage{hyperref}
\usepackage{accents}
\usepackage{comment}

\newtheorem{theorem}{Theorem}[section]     
\newtheorem{proposition}[theorem]{Proposition}

\newtheorem{corollary}[theorem]{Corollary}

\theoremstyle{definition}

\theoremstyle{remark}
\newtheorem{remark}{Remark}
\usepackage[
authoryear, round
]{natbib}
\usepackage[margin=1.0in]{geometry}
\usepackage[normalem]{ulem}

\DeclareMathOperator*{\Var}{Var}
\DeclareMathOperator*{\Std}{Std}
\DeclareMathOperator*{\Cov}{Cov}
\DeclareMathOperator*{\Unif}{Unif}
\newcommand{\hz}[1]{\textcolor{red}{\textsf{[HZ: #1]}}}

\newcommand{\rh}[1]{\textcolor{purple}{\textsf{[RH: #1]}}}

\newcommand{\PP}{\mathbb{P}}
\newcommand{\E}{\mathbb{E}}

\newcommand{\cO}{\mathcal{O}}

\newcommand{\ba}{\mathbf{a}}
\newcommand{\bb}{\mathbf{b}}
\newcommand{\bc}{\mathbf{c}}
\newcommand{\be}{\mathbf{e}}
\newcommand{\bh}{\mathbf{h}}
\newcommand{\bk}{\mathbf{k}}
\newcommand{\bK}{\mathbf{K}}

\newcommand{\bp}{\mathbf{p}}
\newcommand{\br}{\mathbf{r}}
\newcommand{\bs}{{\mathbf{s}}}

\newcommand{\bw}{\mathbf{w}}

\newcommand{\bB}{\mathbf{B}}
\newcommand{\bH}{\mathbf{H}}

\newcommand{\bM}{\mathbf{M}}
\newcommand{\bfm}{\mathbf{m}}
\newcommand{\bI}{\mathbf{I}}
\newcommand{\bJ}{\mathbf{J}}
\newcommand{\bP}{\mathbf{P}}
\newcommand{\bS}{\mathbf{S}}

\newcommand{\bbR}{\mathbb{R}}

\newcommand{\calC}{\mathcal{C}}

\newcommand{\calG}{\mathcal{G}}
\newcommand{\calI}{\mathcal{I}}
\newcommand{\calJ}{\mathcal{J}}

\newcommand{\calM}{\mathcal{M}}
\newcommand{\calP}{\mathcal{P}}

\newcommand{\calR}{\mathcal{R}}
\newcommand{\calS}{\mathcal{S}}

\newcommand{\cR}{\mathcal{R}}

\newcommand{\bW}{\mathbf{W}}
\newcommand{\balpha}{\bm \alpha}

\newcommand{\bd}{\mathbf{d}}

\newcommand{\diag}[1]{\operatorname{diag}\!\left(#1\right)}
\newcommand{\half}{\frac{1}{2}}
\newcommand{\ubarP}{\bar{\mathbf{P}}}

\definecolor{hybridcolor}{RGB}{70,130,180} 
\definecolor{arbicolor}{RGB}{220,20,60}    

\title{\vspace{-1cm}

Modeling Stochastic Multi-Agent Interaction in Intraday Battery Energy Storage Dispatch with Market Power}

\author{ Ruimeng Hu\thanks{Department of Mathematics, and Department of Statistics and Applied Probability,
University of California, Santa Barbara, CA 93106-3080, USA.
\, Email: \texttt{rhu@ucsb.edu}.}\and Mike Ludkovski\thanks{Department of Statistics and Applied Probability,
University of California, Santa Barbara, CA 93106-3110, USA.
\, Email: \texttt{ludkovski@pstat.ucsb.edu}.} \and Hezhong Zhang\thanks{Department of Statistics and Applied Probability,
University of California, Santa Barbara, CA 93106-3110, USA.
\, Email: \texttt{hzhang586@ucsb.edu}.}}

\begin{document}

\maketitle


\begin{abstract}
We develop a stochastic game-theoretic model for intraday dispatch of grid-scale battery energy storage systems (BESSs). We assume that each BESS operator competitively manages her state-of-charge to maximize energy arbitrage revenues, driven by the endogenized electricity price that depends on the sum of the charging rates. We characterize the Nash equilibrium of the resulting finite-player linear-quadratic differential game with a shared stochastic driver, obtaining semi-explicit representations of equilibrium feedback controls and equilibrium prices both in the general heterogeneous and the simplified homogeneous BESS setting, via a system of Riccati equations. We then analyze competitive effects, including the marginal externality of additional BESS entering the market, the benefit of coordination and the corresponding market power of large operators, and supply effects from hybrid-type BESSs. We further study the asymptotic regime as the number of agents grows large. Our model provides a quantitative testbed to study the impact of decentralized BESS deployment on the grid and the resulting reduction in daily price spreads.

\end{abstract}

\noindent\textbf{Keywords:} battery energy storage system (BESS), competitive dispatch, linear-quadratic stochastic differential game

\section{Introduction}

Battery energy storage systems (BESSs) have transformed the power grid over the past decade. Complementing the ongoing deployment of weather-driven renewable energy sources, BESSs allow for smoothing and time-transfer of intermittent energy production. BESS provides an economic solution to the ``duck curve'' challenge of solar energy that creates a large intra-day ramp-down and ramp-up effect that challenges thermal generators, and moreover have proven invaluable for supporting reliability during peak demand periods by providing additional supply.  Deployment of BESS has followed an exponential curve, going from negligible to a major segment of the grid in just a few years. For example, in the California Independent System Operator (CAISO) grid, there was 0.5 GWh of installed BESS capacity in 2018, 1.5 GWh in 2020, 8.5 GWh in 2021, 27.1 GWh in 2023 \cite{caiso} and 59.6 GWh by end of 2025 \cite{modo}, with BESS now routinely providing 20-30\% of supply during the evening peak hours. In turn, widespread adoption of BESS has enabled further penetration by renewable sources. In power grids at the forefront of this change---notably CAISO and ERCOT ---there is the unmistakable pattern of setting records of BESS charging/discharging, record high renewable generation and record low net supply, all feeding off one another.

Within a deregulated power market, the vast majority of grid-scale BESS are merchant-owned. Thus, the grid is highly decentralized—--with multiple agents deploying independent BESS---shifting attention to their strategic interactions. Indeed, BESS operators act competitively and in their own self-interest, maximizing battery profitability. Specifically, BESS operators monetize their physical assets by trading on the short-term power markets, including day-ahead and real-time electricity price markets, as well as ancillary service markets. BESS dispatch decisions are then driven by price signals. The fundamental strategy is to charge the battery when prices are low and discharge it when prices are high. This price-driven behavior is aligned with grid needs through the price formation mechanism: high net load is associated with high prices and low net load is associated with low (or often negative) prices. Consequently, the buy-low sell-high strategy matches the logic of charging when there is excess supply (mid-day due to solar over-production) and discharging when there is high demand (evening when the sun sets, or early morning), broadly aligning with the first-best dispatch strategy that would be picked by a central planner.

Although the market's invisible hand is conceptually attractive, competition creates serious side-effects. BESS operators rightly worry about ``cannibalization'', whereby actions of other operators impact their own profits. Indeed, non-cooperative equilibrium results in over-extension: batteries charge more than is socially optimal when prices are low and discharge more than is optimal when prices are high. The immediate effect is that price variability, the life-blood of BESS operation, is lowered significantly, making everyone less profitable and eventually putting in danger further BESS build-out. 

In this article, we develop a tractable stochastic game model that quantifies these effects, offering a multi-faceted testbed to study the interaction of BESS on the power market. In our model, each operator manages the state-of-charge (SOC) $S^i$ of her own BESS. The corresponding dispatch control $\alpha^i$ affects future flexibility for the respective asset, and also contributes to the overall supply of electricity, impacting wholesale prices. This leads to a finite-player dynamic game in which each agent optimizes a local objective that depends both on her own actions and the aggregate behavior of others.  Notably, the game coupling is through the sum of all the players' controls, as well as through common noise interpreted as the exogenous net load process $Q$ that aggregates the rest of the power generators (thermal, renewable, hydro, etc.) as well as the non-dispatchable loads. 

In line with current grid practice, we consider two types of BESS: ``arbitrageurs'' and ``hybrid''. Hybrid BESS operators jointly dispatch the battery and the energy generated by a renewable source (in practice typically solar). Such configurations are presently the dominant type of new BESS construction, driven by regulatory advantages and the cost of renewable curtailment. Notably, we allow for correlation between the self-generation of the hybrid BESS and the net load $Q$.  Arbitrageur BESS are pure-storage assets that seek to monetize price fluctuations across hours of the day. While historically the primary revenue source for BESS was from ancillary service provision, energy arbitrage is expected to be critical for future BESS operations \citep{butters2025soaking}.

Several studies have examined the application of BESS in hybrid renewable energy systems. We build upon \cite{aung2025intraday} who investigate operation of hybrid assets on operational time scales, focusing on optimal dispatch of the coupled BESS. Their model considers wind-hybrid assets, where the BESS is dynamically controlled to counter the deviations of realized wind power output relative to a given dispatch target. By considering the intertemporal constraints of the BESS storage capacity and power rating limits, their model results in studying a state-constrained finite-horizon stochastic optimal control problem. \cite{belloni2016stochastic} use approximate dynamic programming with discretizations of the wind process and models under linear dynamics of BESS and wind with linear performance criterion.   Reinforcement learning approaches to battery dispatch are reviewed in \cite{sage2025deep}.

The literature on multi-agent BESS models is primarily limited to  mean-field formulations with a continuum of BESSs (interpreted as small residential units unlike our large, discrete grid-scale storage assets). The seminal work by \cite{alasseur2020extended} considered an extended mean field game (MFG) model for a power market with small-scale distributed generation and storage. \cite{hubert2025mean} studies the optimal investment behavior of renewable electricity producers in a competitive market, where both prices and installation costs are influenced by aggregate industry activity. \cite{dumitrescu2024price} characterizes the optimal operating strategy for a stylized storage system, assuming an arbitrary exogenous price process, and determines the equilibrium price in a market driven by an exogenous demand process. \cite{feron2022price} models the price through the mean of the agents' strategies and, by incorporating a demand forecast, identifies the optimal strategies in both finite-player and mean field game settings.  \cite{gobet2022extended} considers McKean-Vlasov-type and \cite{gobet2026federated} federated control, which are different versions of cooperative mean-field for a continuum of infinitesimal residential BESSs.  \cite{fabini2026trading} considers a ``kinetic'' McKean-Vlasov formulation where an aggregator controls the second derivative (``acceleration'') of the SOC process, ensuring that the charging rates are smooth. \cite{BalakinRoger} studies the possibility of tacit collusion between two identical BESSs that  compete with several thermal Cournot-type generators.  They show that head-on competition is not always an equilibrium since it dramatically shrinks the arbitrage spreads and instead construct profit-maximizing asymmetric leader-follower equilibria.

A related strand of energy economics studies the impact of renewables and BESSs on the power market at large. On the one hand, the variable output of renewable generators imposes reliability costs; on the other hand, by smoothing supply, storage suppresses the peaks and valleys of prices. Once the aggregate BESS capacity is large enough, batteries stop being price-takers; the resulting market power leads to dis-economies of scale. Price impact affects both the optimal behavior of an individual BESS who ought to internalize the impact of her actions, as well as introduces competitive effects between BESSs. To this end, several works have sought to construct dynamic equilibrium models of the power market in the presence of a price-making BESS sector. \cite{karadumaneconomics} utilizes a structural merit-order model of power prices to account for equilibrium effects of storage, including implications for incumbent generator bidding. BESS profitability is shown to be significantly reduced due to price effects. The recent comprehensive study by \cite{butters2025soaking} builds both a dynamic equilibrium model for the strategy of a single myopic BESS, as well as determines the aggregate capacity of BESS that can be profitable in a given market and renewable subsidy setting. As the battery fleet expands in size, BESS operations lower the variation in mean equilibrium prices across hours of the day, putting an intrinsic cap on the deployment of new BESS. The above qualitative findings have been confirmed empirically.  \cite{lamp2022large} documents a strong reduction in average intra-day wholesale price spreads in CAISO during the late 2010s as additional BESS entered the market. \cite{kirkpatrick2025estimating} estimates the impact of BESS on reducing grid congestion; congestion is a key driver of local marginal price (LMP) fluctuations. \cite{karadumaneconomics} finds similar evidence in Australia. 

In this paper, we focus on a Linear-Quadratic (LQ) formulation of the problem, which allows for analytic tractability and clear interpretation of strategic behaviors. We derive semi-explicit solutions to the Hamilton–Jacobi–Bellman (HJB) equations governing optimal regional storage control and characterize the resulting equilibrium in both the general heterogeneous and the homogeneous case by utilizing the Markovian structure of the model. The derived Riccati ODE system can be numerically solved to consider markets with up to 50 operators. Furthermore, the framework allows us to study a Major-Minor version as well as consider the asymptotics as the number of agents increases. Our setup offers several modifications of cited works. For instance, we incorporate a common exogenous generation process $Q_t$ that impacts all agents, and we also explicitly capture the price impact of individual dispatch strategies. Moreover, unlike standard MFG formulations (e.g.~\cite{feron2022price}) our power price is a function of the \emph{sum} of controls, rather than the mean control.

From the power market perspective, our analysis provides novel insights into competitive effects due to price-making BESS fleets. We quantify the price effect channel whereby by seeking to profit from intra-day price spreads, energy arbitrageurs inevitably drive those spreads down, affecting not only themselves, but also other market stakeholders. Our model allows us to separate this impact into a competition effect and a supply effect. For the former, we moreover carry out new experiments that document the value of coalition formation and the potential benefit of aggregating multiple BESS units into a ``cartel''.

The rest of the article is organized as follows. Section \ref{sec:formulation} sets up the dynamics of the competitive BESS market and defines the optimization problems of the agents. Section \ref{sec:HJB_sys} provides the semi-explicit solution of the general heterogeneous and the special homogeneous cases in terms of a system of Riccati ODEs. In Section \ref{sec:comparative}, we investigate various sensitivities of our model, including BESS cost parameters, number of agents in the market, and their correlation. Section \ref{sec:major-minor} analyzes the impact of BESS sizing, including Major-Minor and oligopoly setups, which shed light on the competitive effects and the merit of coalition-forming. Section \ref{sec:asymptotics} provides asymptotic analysis as the number of operators goes to infinity, connecting to a mean-field formulation; Section \ref{sec:conclusion} concludes. The proofs of all the theorems are delegated to the Appendix.

\section{Power Market with Competitive BESS}\label{sec:formulation}
In this section we describe our power market model  which focuses on hybrid renewable electricity suppliers who competitively provide power. The suppliers interact through power prices $P_t$ which in turn depend on the aggregate supply of power. To focus on the behavior of the BESS and the respective state-of-charge dynamics, we aggregate all other producers into a total power production process $(Q_t)$ and concentrate on the interaction among $N$ BESS agents\footnote{Hereafter, we shall use \emph{agent} and \emph{operator} interchangeably.}. 

The basic economics of hybrid BESS operation is to maximize revenue given stochastic generation, battery state-of-charge (SOC) and power price processes. Specifically, the hybrid asset generates electricity at a rate $\mu_t$. That electricity can be stored in the BESS or sold to the grid (or additional electricity can be purchased and stored), with the grid-facing charge/discharge rate denoted as $\alpha_t$. In turn, the $\alpha_t$'s of the BESS operators and the non-renewable generation $Q_t$ determine the price $P_t$. Finally, to capture engineering features of the storage asset, we introduce frictions regarding the charge/discharge rates and the state-of-charge. 

To achieve maximal tractability, we work in a linear-quadratic framework that keeps all stochastic dynamics affine in the state variables. This allows us to convert equilibrium computation to a system of Riccati ODEs, which in turn can be efficiently scaled to (essentially arbitrarily) large markets. While restrictive in some sense, our setup is flexible in several other features that we incorporate in our formulation. Specifically, we (i) keep most coefficients explicitly time-dependent to capture the inherent intraday and weekly seasonality of power markets and prices; (ii) allow for common noise between the non-renewable generation $Q_t$ and the hybrid generation $\mu_t$; (iii) introduce regional asset-specific prices $P^i_t$ that proxy the locational marginal prices prevalent in electricity markets (e.g., arising due to transmission grid constraints); (iv) introduce regional price impact that can capture geographic separation among BESS operators. Although competition inherently cannibalizes profits, these effects are heterogeneous, with some operators being more exposed to competitive spillovers than others.  

\subsection{State Dynamics}
To formally define our mathematical setup, consider a probability space $(\Omega, \mathcal{F}, \PP)$. We shall consider $N$ competitive agents that face $N+1$ sources of risk. Our finite horizon $[0,T]$ should be interpreted as a single planning period, typically $T=24$ hours; the description below is motivated by the typical intra-day dynamics of power markets and renewable generation. 

The exogenous, non-renewable power production $(Q_t)$ evolves autonomously and is driven by the common noise (Wiener) process $(W^0_t)$. This noise reflects grid-wide uncertainties, such as weather conditions that affect all agents simultaneously. We assume that $Q_t$ follows a mean-reverting Ornstein-Uhlenbeck process that fluctuates around the time-dependent reference level $\{ \theta(t) \}$:
\begin{equation} \label{Q process}
    dQ_t = \kappa(\theta(t) - Q_t)\,dt + \sigma^0\,dW_t^0,
\end{equation}
where $\kappa > 0$ denotes the mean reversion rate and $\sigma^0$ the volatility of generation.  

The instantaneous mean generation rate of each hybrid asset $i$ is denoted by $\mu^i_t$. We assume the form
\begin{equation}\label{mu_Q}
    \mu^i_t = a^i(t) + b^i(t) Q_t, \qquad i=1,\ldots,N,
\end{equation}
so that individual power generation processes are correlated with the exogenous $Q_t$ via the deterministic factors $b^i(t)$ and also include a time-dependent but deterministic supply process $a^i(t)$.  Interpreting $\mu^i$ as solar generation, $a^i$ represents the idiosyncratic irradiance at the asset location, while $b^i$ captures the systematic grid-wide factor (regional weather conditions affecting all solar suppliers on the grid). 

BESS operators choose their control processes $(\alpha^i_t)$ that represent the charge/discharge rates from the battery: $\alpha_t^i > 0$ corresponds to charging the battery by purchasing energy from the grid, while $\alpha_t^i < 0$ corresponds to discharging or selling energy back to the grid.  The respective state process is constituted by the BESS State-of-Charge (aka the storage level) $S_t^i$ that follows
\begin{align}\label{eq:tilde-S}
 dS^i_t = \underbrace{\mu^i_t dt + \sigma^i dW^i_t}_{\text{Self-Generation}} + \underbrace{\alpha^i_t dt}_{\text{BESS Dispatch}} = (\mu^i_t + \alpha^i_t) dt + \sigma^i dW^i_t, \qquad i=1,\ldots, N,
\end{align}
where $\sigma^i$ is the $i$-th BESS volatility coefficient and $(W_t^i)$ is a Brownian motion representing local uncertainty specific to asset $i$. The interpretation of the $\sigma^i dW^i_t$ term is the stochasticity of the underlying generation beyond the drift $\mu^i_t$. We assume that $W^i$ has a constant correlation $\rho^i$ with the common noise driver $W^0$. Plugging this as well as \eqref{mu_Q}  into \eqref{eq:tilde-S} yields the ultimate dynamics 

\begin{equation}\label{S process}
    dS_t^i = (a^i(t) + b^i(t)Q_t +\alpha_t^i)\,dt + \rho^i\sigma^i\,dW_t^0 + \sigma^i\sqrt{1-(\rho^i)^2}\,dW_t^i
\end{equation}
where the $N+1$ Wiener processes $W^i,  i = 0, 1, \ldots, N$ are all independent.

We denote by $\bS_t = (S^1_t, \ldots S^N_t)$ the vector of all the BESS state-of-charge at time $t$, assumed to be publicly known by all agents.
The control for each agent $i$ is postulated to be in feedback form
\begin{equation}\label{alpha_t}
    \alpha_t^i = \alpha^i(t,Q_t,\bS_t),
\end{equation}
where $(Q_t,\bS_t)\in \bbR^{N+1}$ denotes the full state vector at time $t$, and $\alpha^i: [0,T] \times \bbR^{N+1} \to \bbR$ is a measurable function specifying the control policy for agent $i$. These control policies must belong to the admissible set $\mathcal A_0$, where for $t \in [0,T]$
\[
\mathcal{A}_t := \Big\{ (\alpha_s)_ {s \in [t, T]} \text{ progressively measurable w.r.t. } (\mathcal{F}_s)_{s\geq t}: \mathbb{E}  \Big[\int_t^T |\alpha_s|^2 ds  \Big] < \infty \Big\},
\]
and $(\mathcal{F}_t)_{t \in [0,T]}$ is the filtration generated by $(Q_s, \bS_s)$.
We use $\balpha \equiv( \alpha^1, \ldots, \alpha^N) \in \bbR^N$ to denote the vector, i.e., the control profile, of the BESS charging rates. 
\subsection{Economic Objective}\label{sec:objective}
The primary goal of each agent $i=1,\ldots, N$ is to maximize cumulative revenue over $[0,T]$ which is 
\[
-\int_0^T P^i_t \alpha^i_t dt
\]
where $P_t^i$ is the electricity locational marginal price for BESS $i$, and $\alpha_t^i$ is the above control (charging/discharging rate). The regional price $P^i_t$ is affected both by the actions of the $i$th agent and the decisions of other agents. We assume that the price $P^i$ is determined by a supply-demand equilibrium with a linear demand curve: 
\begin{align}\label{eq:price-impact}
    P_t^i = \bar{P}^i - c_1^i\big(Q_t - \bw_{i}^\top\balpha_t\big),
\end{align}
where $\bar{P}^i$ denotes the base price, $c_1^i > 0$ reflects the price sensitivity, and the local net supply is $Q_t - \bw_i^\top \balpha_t$. Above, $\bw_{i} \in \bbR^N_+$ is a weight vector: $w_{ij}$, which is the $j$-th element of $\bw_i$, captures the influence of the $j$-th BESS on agent $i$. Intuitively, $w_{ij}$ is small when $i$ and $j$ are far apart on the grid and (close to) 1 when $i$ and $j$ are near each other, with $w_{ii}=1$. Thus, discharging by other agents lowers the price for agent $i$ proportionally by $c^i_1 w_{ij}$, and our model can be viewed as a simplified proxy for the network congestion estimated in \cite{kirkpatrick2025estimating}. The base case is $w_{ij} = 1$ for all $i,j$, in which case $P^i_t = \bar{P}^i - c_1^i(Q_t -\sum_i \alpha^i_t)$ and the price is linear in the total \emph{net supply}. The revenue rate $-P^i_t \alpha^i_t$ is therefore linearly decreasing in exogenous supply $Q_t$, linearly decreasing in the discharge rate of other assets $\alpha^j$, and quadratically decreasing in the BESS own discharge rate $\alpha^i$. 

There is strong economic evidence that storage affects prices, i.e., that $P_t^i$ depends on $\alpha_t^i$. In particular, several studies \citep{butters2025soaking,lamp2022large,karadumaneconomics} econometrically estimate the resulting elasticities in real-life markets, which can be directly linked to our $c_1, w_{ij}$ parameters.

To reflect the operational characteristics of the battery, specifically finite charging capacity and finite storage capacity, running costs are imposed on  $\alpha_t^i$ and $S_t^i$. Specifically, we include a quadratic cost $c_2^i (\alpha_t^i)^2$ that penalizes aggressive charging/discharging (also important to reduce long-term battery degradation) and a quadratic cost $c_3^i (S_t^i - \zeta^i(t))^2$ that penalizes SOC deviations from the target level $\zeta^i(t)$.  The default $\zeta^i(t)=\bar{C}^i/2$ is interpreted as half the energy capacity of the BESS. The cost $c_3^i$ can be linked to round-trip efficiency of the battery technology and to battery degradation costs, both of which act to discourage deep cycling and hence penalize fluctuations in $S_t$.

\begin{remark}
More realistically, there are hard physical caps on the magnitude of $\alpha_t^i$ and of $S^i_t$ \citep{aung2025intraday}. In order to keep the LQ structure, we proxy these hard constraints by the soft quadratic penalties. We proxy the constraint $S^i_t \in [0, \bar{C}^i]$ with the penalty $c_3^i (S_t^i - \bar{C}^i/2)^2$. Another motivation is that $\zeta^i(t)$ represents a dispatch target process and the agent wishes to keep the battery SOC close to this level. See \cite{aung2025intraday} for an empirical analysis of the accuracy of replacing such nonlinear hard constraints with quadratic penalties. 
\end{remark}

Finally, we impose a quadratic terminal cost  at $T$ for SOC deviations from the final target SOC level $c_4^i (S_T^i - \zeta^i(T))^2$. The terminal constraint prevents economically irrelevant behavior due to the finite horizon (for example, trying to discharge all stored energy regardless of supply  and price signals). 
The cost parameters $c_2^i \ge 0, c_3^i \ge 0, c_4^i \ge 0$ are non-negative and can be zero. Note that typically we set $c_4^i \gg c_3^i$ in order to have a tight range for the resulting $S^i_T$ terminal SOC levels.

Combining all these cost components, each BESS operator $i$ aims to \emph{minimize} her total expected cost functional:
\begin{align} \label{cost function}
    J^{i}(\balpha) 
    &= \mathbb{E}\Big[\int_0^T \big\{P_t^i\,\alpha_t^i + c_2^{i} (\alpha_t^i)^2 + c_3^{i}(S_t^{i} - \zeta^i(t))^2 \big\}\,dt + c_4^{i}(S_T^{i} - \zeta^i(T))^2\Big]  \\
    &= \mathbb{E}\Big[\int_0^T \big[\underbrace{\big(\bar{P}^i - c_1^i (Q_t - \bw_{i}^\top\balpha_t)\big)\alpha_t^i}_{-\text{Running Revenue}} + \underbrace{c_2^i (\alpha_t^i)^2 + c_3^i\big(S_t^{i} - \zeta^i(t)\big)^2}_{\text{Running Cost}} \big]\,dt + c_4^i\underbrace{\big(S_T^{i} - \zeta^i(T)\big)^2}_{\text{Terminal Cost}}\Big],\nonumber
\end{align}
which depends on the entire control profile $\balpha_t$ due to the $c^i_1 (\bw_{i}^\top\balpha_t) \alpha^i_t = c^i_1 \sum_{j=1}^N w_{ij} \alpha^j_t \alpha^i_t$ term. Moreover, agent behavior is coupled due to dependence on the exogenous common factor $(Q_t)$.

\begin{figure}[t]
\centering
\begin{tikzpicture}[
  box/.style={
    draw,
    rounded corners,
    align=center,
    inner sep=2.5pt,
    minimum width=3.5cm,
    minimum height=0.8cm
  },
  circ/.style={
    draw,
    circle,
    align=center,
    inner sep=0.5pt,
    minimum size=1 cm
  },
  pbox/.style={
    draw,
    rounded corners,
    align=center,
    inner sep=2.5pt,
    minimum width=1.55cm,
    minimum height=0.72cm
  },
  arr/.style={->, semithick}
]

\node[circ] (Q) at (-4.4,  0) {Grid \\ Supply \\ $Q_t$};

\node[box] (A1) at (0,  1.4) {Agent 1: $\alpha^1(t,Q_t,S_t^1, \bS_t^{-1})$};
\node[box] (A2) at (0,  0.15) {Agent 2: $\alpha^2(t,Q_t,S_t^2, \bS_t^{-2})$};
\node        at (0, -0.65) {$\vdots$};
\node[box] (AN) at (0, -1.5) {Agent $N$: $\alpha^N(t,Q_t,S_t^N, \bS_t^{-N})$};

\node[] at (4.65, -.5) {Local prices};

\node[circ] (P1) at (4.4,  1.4) {$P_t^1$};
\node[circ] (P2) at (4.4,  0.15) {$P_t^2$};
\node        at (4.4, -0.7) {$\vdots$};
\node[circ] (PN) at (4.4, -1.5) {$P_t^N$};

\node[pbox] (R1) at (7.5,  1.4) {Cost $J^1$};
\node[pbox] (R2) at (7.5,  0.15) {Cost $J^2$};
\node        at (7.5, -0.65) {$\vdots$};
\node[pbox] (RN) at (7.5, -1.5) {Cost $J^N$};


\draw[arr] (P1) -- (R1);
\draw[arr] (P2) -- (R2);
\draw[arr] (PN) -- (RN);

\draw[arr] (Q) -- (A1.west);
\draw[arr] (Q) -- (A2.west);
\draw[arr] (Q) -- (AN.west);

\foreach \A in {A1,A2,AN}{
  \foreach \P in {P1,P2,PN}{
    \draw[arr] (\A.east) -- (\P.west);
  }
}
\end{tikzpicture}
\caption{Finite-player coupling structure between individual controls $\alpha^i$, individual SOC $S^i$, shared driver $Q_t$, aggregate SOC state $\bS^{-i}_t$ and local prices $P^i_t$.}
\end{figure}

Our model extends the single-agent setting of \cite{aung2025intraday}  to an $N$-agent game, lifting the stochastic control problem to a finite player non-cooperative setup. Let $\balpha^{-i} = (\alpha^1,\alpha^{i-1},\alpha^{i+1},\ldots,\alpha^N)$ be the collection of controls except for agent $i$'s, then operator $i$'s objective can be written as
\[
    J^i(\balpha) = J^i(\alpha^i,\balpha^{-i}),
\]
and interpreted as if we freeze the controls of all other agents $\balpha^{-i}$ while agent $i$ faces a standard stochastic control problem of finding $\inf\limits_{\alpha^i \in \mathcal{A}_0} J^i(\alpha^i,\balpha^{-i})$.

If every agent simultaneously adopts such a best-response strategy, the resulting control profile $\hat\balpha = (\hat\alpha^1,\ldots\hat\alpha^N) \in \bbR^N$ constitutes a \textbf{Nash Equilibrium} (NE). In other words, $\hat\balpha$ is NE if no agent can reduce her cost by unilateral deviation
\begin{align*}
     J^i(\hat\alpha^i,\hat\balpha^{-i}) \leq J^i(\alpha,\hat\balpha^{-i}), \quad \forall i \in \{1,2,\ldots,N\} \text{ and } \alpha \in \mathcal{A}_0. 
\end{align*}
Assuming the infimum is attained, then $\hat\alpha^i$ satisfies
\begin{align*}
    J^i(\hat\alpha^i,\hat\balpha^{-i})=\inf_{\alpha^i} J^i(\alpha^i,\hat\balpha^{-i}), \quad \hat\alpha^i \in \arg\min_{\alpha^i} J^i(\alpha^i,\hat\balpha^{-i}).
\end{align*}
Since each cost functional depends on the full control profile $\balpha$ and the joint state dynamics $(Q_t, \bS_t)$, both of which are influenced by all agents, characterizing a Nash equilibrium requires solving $N$ coupled best-response problems simultaneously.

In the case where admissible controls take the Markov feedback form \eqref{alpha_t}, the conditional cost-to-go for agent $i$ is Markovian in the state variables $(Q_t,\bS_t)$. Thus, for a fixed profile $\bar{\balpha}^{-i}$ of others' strategies, we define the best-response value function of operator $i$ by,
\begin{align}\label{general_value_function}
    V^i(t,q,\bs; \bar{\balpha}^{-i}) &= \inf_{\alpha^i \in \mathcal{A}_t}\mathbb{E}\Big[\int_t^T \big(\bar{P}^i - c_1^i\big(Q_s - w_{ii}\alpha^i_s - \sum_{j\neq i}w_{ij}\bar{\alpha}^j(s,Q_s,\bS_s)\big)\big)\alpha_s^i \nonumber \\ + c_2^i (\alpha_s^i)^2 & + c_3^i(S_s^{i} - \zeta^i(s))^2\,ds + c_4^i(S_T^{i} - \zeta^i(T))^2 \, \big|\, Q_t = q,\, \bS_t =\bs\Big],\, i= 1,2,\ldots,N.
\end{align}
At a Nash equilibrium $\hat\balpha$, the game value of operator $i$ is then given by $V^i(t,q,\bs; \hat{\balpha}^{-i})$. The equilibrium $\hat \balpha$ and the corresponding game value  $V^i(t,q,\bs; \hat{\balpha}^{-i})$ are characterized jointly through a system of coupled Hamilton–Jacobi–Bellman equations. In the next section, we derive this system and provide a semi-explicit representation of $V^i(t,q,\bs; \hat{\balpha}^{-i})$ (cf. Theorem~\ref{General Theorem}). With a slight abuse of notation, we henceforth denote the equilibrium game value simply by $V^i(t, q, \bs)$.

\section{HJB System and Semi-explicit Solutions}\label{sec:HJB_sys}

In this section we present our main methodological results that provide a semi-explicit solution to the Nash equilibrium characterized by the system \eqref{general_value_function}. We start with the general case and then specialize to the homogeneous setting where all agents have identical objectives and state dynamics. All proofs are in Appendix \ref{General Theorem Proof}.

\subsection{General Heterogeneous agents}\label{hetero_agents}

Define the vector notations $\ubarP := [\bar{P}^1,\ldots,\bar{P}^N]^\top$, $\bc_1 := [c_1^1,\ldots,c_1^N]^\top$ and $\bc_2 := [c_2^1,\ldots,c_2^N]^\top$, and the matrix $\bW :=[\bw_1,\ldots,\bw_N]^{\top}$. Moreover, we denote by $\bI_N$ the identity matrix of size $N$, and by $\bJ_N$ a matrix of all ones. 

\begin{theorem}[General Case] \label{General Theorem}
In equilibrium, the Markovian value functions $V^i(t,q,\bs)$, $i = 1, \ldots, N$, satisfy the coupled HJB system: 
\begin{align}
& \partial_t V^i + \inf_{\alpha^i} \Big\{ 
      \alpha^i \big(\bar{P}^i- c_1^i(q -  \bw_{i}^{\top}[\hat\alpha^1, \ldots, \alpha^i, \ldots, \hat\alpha^N]) \big)
    + c_2^i (\alpha^i)^2 + (\alpha^i + a^i(t)+b^i(t) q) \partial_{s^i} V^i\Big\} \notag\\
 &   + c_3^i (s^i - \zeta^i(t))^2
    + \kappa(\theta(t) - q)\partial_q V^i + \sum_{j\neq i}  (\hat\alpha^j + a^j(t)+b^j(t) q) \partial_{s^j} V^i + \frac{1}{2}\mathrm{Tr}(\Sigma\Sigma^\top \partial_{q,\bs}^2V^i)=0,
\label{general HJBE-PDE}
\end{align}
with terminal condition $V^i(T, q, \bs) = c_4^i (s^i - \zeta^i(T))^2$. Here $\partial_{q, \bs}^2 V^i$ denotes the Hessian matrix of $V^i$, and $\Sigma \in \mathbb{R}^{(N+1) \times (N+1)}$ is the covariance matrix with non-zero entries $\Sigma_{00} = \sigma^0$, $\Sigma_{i0} = \sigma^i\rho^i$ and $\Sigma_{ii} = \sigma^i\sqrt{1-(\rho^i)^2}$, $i = 1, \ldots, N$.

Agent $i$'s equilibrium strategy $\hat\alpha^i(t,q,\bs)$ is given by
\begin{align}\label{opt_alpha}
    \hat \alpha^i(t, q, \bs) &= \bfm_{i} \diag \bd ^{-1}(\bc_1 q -[\partial_{s^1}V^1,\ldots,\partial_{s^N}V^N]^{\top}(t, q, \bs) -\ubarP),
\end{align}  
where  $\bfm_i$ is the $i$-th row vector of $\bM$, and 
\begin{equation} \label{d,U,u}
    \bd := [w_{11}, \ldots, w_{NN}]^\top \odot \bc_1+ 2\bc_2, \quad 
    \bM := (\bI_N + \diag \bd ^{-1} \diag {\bc_1} \bW)^{-1},
\end{equation}
provided that $\bI_N + \diag \bd ^{-1} \diag {\bc_1} \bW$ is invertible. 

Then, the game value  $V^i(t, q, \bs)$ of agent $i$ admits a quadratic form:
\begin{equation}\label{ansatz}
    V^i(t, q, \bs) = \begin{bmatrix}
q \\
\bs
\end{bmatrix}^\top \begin{bmatrix}
p^i_{0}(t) & \bp^i(t)^\top \\
\bp^i(t) &  \bP^i(t)
\end{bmatrix}
\begin{bmatrix}
q \\
\bs
\end{bmatrix} + \begin{bmatrix}
r_0^i(t) \\
\br^i(t)
\end{bmatrix}^\top \begin{bmatrix}
q \\
\bs
\end{bmatrix} + u^i(t),
\end{equation}
with $(\bP^i,\bp^i, \br^i,p^{i}_{0}, r_0^i,  u^i)(t)$ satisfying the $N$-coupled ODE system: 
\begin{equation} \label{general_ode_sys}
\left\{
\begin{aligned}
- \dot{\bP}^i(t) &= \frac{1}{2}  
    c_1^i (\bk_2^i(t)^{\top}\bk_5^i(t)+\bk_5^{i}(t)^{\top}\bk_2^i(t)) + c_2^i \bk_2^i(t)^{\top}\bk_2^i(t)  +  c_3^i \be_i \be_i^\top 
-   \bP^i(t) \bK_2(t) 
-  \bK_2^\top(t)  \bP^i(t), \\
- \dot{\bp}^i(t) 
&= -\frac{1}{2}c_1^i \big( k_1^i(t)\bk_5^i(t)^{\top} - \bk_2^{i}(t)^{\top}k_4^i(t) \big)-  c_2^i k_1^i(t)\bk_2^i(t)^{\top}
 - \kappa \bp^i(t) \\
 &\quad + \big(  \bP^i(t) (\bk_1(t)+\bb(t)) - \bK_2^\top(t) \bp^i(t) \big), \\
 - \dot{\br}^i(t) &= -\bar{P}^i \,\bk_2^i(t)+ c_1^i (\bk_2^i(t)^{\top}k_6^i(t)+k_3^i(t)\bk_5^i(t)^{\top}) + 2 c_2^i \bk_2^i(t)^{\top}k_3^i(t)
- 2 c_3^i \zeta^i(t) \be_i \\
&\quad +2 \kappa \theta(t) \bp^i(t) - \bK_2^\top(t) \br^i(t) - 2  \bP^i(t)(\bk_3(t)-\ba(t)), \\
    -\dot{p}^{i}_{0}(t) 
&= -c_1^i k_1^i(t)k_4^i(t) + c_2^i (k_1^i(t))^2 - 2\kappa p^{i}_{0}(t) + 2 \bp^i(t)^\top (\bk_1(t)+\bb(t)), \\
- \dot{r}_0^i(t) &= \bar{P}^i\, k_1^i(t)+ c_1^i \big( k_3^i(t)k_4^i(t)+k_1^i(t)k_6^i(t) \big) - 2 c_2^i k_1^i(t)k_3^i(t)  + \kappa \big( 2 \theta(t) p^{i}_{0}(t) - r_0^i(t) \big)  \\
&\quad +\big( \br^i(t)^\top (\bk_1(t)+\bb(t)) - 2 \bp^i(t)^\top (\bk_3(t)-\ba(t)) \big), \\
- \dot{u}^i(t) &= -\bar{P}^i\,k_3^i(t) + c_1^i\,k_3^i(t)k_6^i(t) + c_2^i \big( k_3^i(t)\big)^2 + c_3^i \big(\zeta^i(t)\big)^2
+ \kappa r_0^i(t) \theta(t) 
- \br^i(t)^\top (\bk_3(t)-\ba(t)) \\
&\quad+ \mathrm{Tr}\Big(\Sigma\Sigma^\top \big[\begin{smallmatrix}
p^{i}_{0}(t) & \bp^i(t)^\top \\
\bp^i(t) &  \bP^i(t)
\end{smallmatrix}\big]\Big),
\end{aligned}
\right.
\end{equation}
and terminal conditions
\begin{equation}\label{general_terminal}
\bP^i(T)= c_4^i \be_i \be_i^\top, \,
\bp^i(T) = \mathbf{0}, \,
\br^i(T) = -2 c_4^i \zeta^i(T) \be_i, \,
p^{i}_{0}(T) = 0,\,
r_0^i(T) = 0, \,
u^i(T) = c_4^i \zeta^i(T)^2.
\end{equation}
The intermediate quantities are
\begin{align} \label{intermediates}
    \bk_1(t) &:= \bM\begin{bmatrix}
        \frac{c_1^{1}-2\be_1^{\top}\bp^{1}(t)}{d_1}\\
        \vdots \\
        \frac{c_1^{N}-2\be_N^{\top}\bp^{N}\!(t)}{d_N} 
    \end{bmatrix},&\quad
    \bK_2(t) &:= \bM\begin{bmatrix}
        \frac{2\be_1^{\top}\bP^{1}(t)}{d_1}\\
        \vdots \\
        \frac{2\be_N^{\top}\bP^{N}\!(t)}{d_N} 
    \end{bmatrix},&  \quad 
    \bk_3(t) &:= \bM\begin{bmatrix}
        \frac{\be_1^{\top}\br^{1}(t)+\bar{P}^1}{d_1}\\
        \vdots \\
        \frac{\be_N^{\top}\br^{N}\!(t)+\bar{P}^N}{d_N} 
    \end{bmatrix},& \\
    \bk_4(t) &:= \mathbf{1}_N-\bW \bk_1(t),& \quad 
    \bK_5(t) &:= \bW \bK_2(t),& \quad
    \bk_6(t) &:= \bW \bk_3(t),& \notag
\end{align}
where  $k_1^i, k_3^i, k_4^i, k_6^i \in \bbR$, are the $i$-th element of $\bk_1, \bk_3, \bk_4, \bk_6 \in \bbR^N$, $\bk_2^i, \bk_5^i \in \bbR^{1\times N}$, are the $i$-th row vector of $\bK_2, \bK_5 \in \bbR^{N \times N}$, $\ba(t) = [a^1(t),\ldots,a^N(t)]^\top, \bb(t) = [b^1(t),\ldots,b^N(t)]^\top$ and $\mathbf{1}_{N}$ is the column vector of all ones of size $N$, and $\be_i$ is the $i$-th standard basis vector of $\bbR^N$. 
\end{theorem}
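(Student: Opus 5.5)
The plan is the standard verification route for linear-quadratic games: derive the coupled HJB system by dynamic programming, solve the first-order conditions for the Nash equilibrium in closed form, then close the system with the quadratic ansatz \eqref{ansatz}, which turns it into the ODEs \eqref{general_ode_sys}.

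\textbf{Step 1: the HJB system.} Fix the feedback profile $\hat\balpha^{-i}$ of the other agents. Agent $i$ then faces a standard Markovian control problem in the state $(Q,\bS)$, whose drift is
\[
\big(\kappa(\theta-q),\; a^j+b^jq+\alpha^j\big)_{j},
\]
and whose diffusion matrix is $\Sigma$, read off from \eqref{Q process} and \eqref{S process}. The dynamic programming principle applied to \eqref{general_value_function} gives \eqref{general HJBE-PDE} directly, with terminal condition $c_4^i(s^i-\zeta^i(T))^2$. Only the terms involving $\alpha^i$ enter the infimum; all other terms sit outside it.

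\textbf{Step 2: the equilibrium controls.} The Hamiltonian is strictly convex in $\alpha^i$, with quadratic coefficient $c_1^iw_{ii}+c_2^i>0$. The first-order condition is
\[
\bar P^i - c_1^i q + c_1^i\sum_j w_{ij}\hat\alpha^j + c_1^iw_{ii}\hat\alpha^i + 2c_2^i\hat\alpha^i + \partial_{s^i}V^i = 0.
\]
Here the own-term $w_{ii}\alpha^i$ is counted twice, which produces $d_i=w_{ii}c_1^i+2c_2^i$. Dividing by $d_i$ and stacking over $i$ gives
\[
(\bI_N+\diag{\bd}^{-1}\diag{\bc_1}\bW)\,\hat\balpha=\diag{\bd}^{-1}\big(\bc_1q-\nabla V-\ubarP\big),
\]
where $\nabla V:=[\partial_{s^1}V^1,\ldots,\partial_{s^N}V^N]^\top$. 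Under the stated invertibility assumption this is \eqref{opt_alpha}. The system of first-order conditions is solved simultaneously; this is the fixed-point characterization of the Nash equilibrium.

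\textbf{Step 3: the quadratic ansatz.} Substitute \eqref{ansatz}. Then $\partial_{s^i}V^i=2\be_i^\top\bp^iq+2\be_i^\top\bP^i\bs+\be_i^\top\br^i$, so $\hat\balpha$ is affine in the state:
\[
\hat\balpha=\bk_1q-\bK_2\bs-\bk_3 .
\]
This matches \eqref{intermediates}, after checking the sign conventions (for instance, that $\bar P^i$ enters $\bk_3$ with sign $+$). It follows that $\bW\hat\balpha$ is also affine, and the local net-supply coefficient $q-\bw_i^\top\hat\balpha$ equals $k_4^iq+\bk_5^i\bs+k_6^i$. Now plug into \eqref{general HJBE-PDE}. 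The running revenue $\hat\alpha^i(\bar P^i-c_1^i(\cdots))$ and the cost $c_2^i(\hat\alpha^i)^2$ become quadratic forms in $(q,\bs)$, built from outer products of $(k_1^i,-\bk_2^i,-k_3^i)$ with $(k_4^i,\bk_5^i,k_6^i)$. The transport term $\sum_j(\hat\alpha^j+a^j+b^jq)\partial_{s^j}V^i$ becomes $\nabla_{\bs}V^{i\top}((\bk_1+\bb)q-\bK_2\bs-\bk_3+\ba)$. With $\nabla_{\bs}V^i=2\bp^iq+2\bP^i\bs+\br^i$, this produces the $-\bP^i\bK_2-\bK_2^\top\bP^i$, $\bP^i(\bk_1+\bb)$ and $-2\bP^i(\bk_3-\ba)$ terms. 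The $q$-drift term produces the $\kappa$ terms, and the trace term feeds only $u^i$. Matching the coefficients of $\bs\bs^\top$, $q\bs$, $\bs$, $q^2$, $q$ and $1$ (symmetrizing the $\bs\bs^\top$ block) yields \eqref{general_ode_sys}. Evaluating the ansatz at $t=T$ gives \eqref{general_terminal}.

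\textbf{Step 4: verification.} If the Riccati system has a solution on $[0,T]$, the resulting $V^i$ are $C^{1,2}$ with quadratic growth, and $\hat\alpha^i$ is affine, hence admissible. A standard verification argument then applies: Itô's formula plus convexity of the Hamiltonian in $\alpha^i$, with localization justified by square-integrability. It shows that $\hat\alpha^i$ is a best response to $\hat\balpha^{-i}$ for each $i$, so $\hat\balpha$ is a Nash equilibrium with values $V^i$.

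\textbf{Main obstacle.} I expect the hardest part to be the bookkeeping in Step 3, specifically getting the cross terms of $c_1^i\hat\alpha^i\,\bw_i^\top\hat\balpha$ right. These terms must be symmetrized (the $\tfrac12(\bk_2^{i\top}\bk_5^i+\bk_5^{i\top}\bk_2^i)$ term), and their signs must be tracked consistently with the convention $\hat\balpha=\bk_1q-\bK_2\bs-\bk_3$. I would first expand everything in the scalar case $N=1$ as a check, then lift the computation to vectors.

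Global existence of the coupled, non-symmetric Riccati system is not claimed by the statement. I would therefore treat it as an assumption: local existence holds by Lipschitz continuity of the right-hand side, and the result extends to all of $[0,T]$ whenever the solution stays bounded.
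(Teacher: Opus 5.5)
Your proposal follows the paper's proof step for step: dynamic programming for the best-response HJB, the first-order condition stacked into the linear system inverted by $\bM$, the affine form $\hat\balpha=\bk_1q-\bK_2\bs-\bk_3$, and coefficient matching under the quadratic ansatz; your Step 4 verification argument is a sound addition that the paper omits. When you carry out the bookkeeping, you will find that the $q$-coefficient of $-c_1^i\hat\alpha^i(q-\bw_i^\top\hat\balpha)$ is $c_1^i(k_3^ik_4^i-k_1^ik_6^i)$, as in the paper's own appendix expansion, so the $+k_1^ik_6^i$ in the stated $\dot r_0^i$ equation is a sign typo rather than a problem with your argument.
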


Theorem \ref{General Theorem} leverages the linear-quadratic structure to express $V^i(t,q,\bs)$ in the form of \eqref{ansatz}, where the coefficients solve the system of Ricatti ordinary differential equations (ODEs) \eqref{general_ode_sys}.
The detailed proof is deferred to Appendix \ref{General Theorem Proof}.
Observe that  \eqref{general_ode_sys}  has a triangular structure with the highest-order quadratic coefficients $\{\bP^i(t)\}_{i=1}^N$ forming a an autonomous subsystem, since the quantities $\bK_2(t)$ and $\bK_5(t)$ appearing in the first line (RHS of $-\dot{\bP}^i(t)$) depend only on $\{\bP^j(t)\}_{j=1}^N$. After that Riccati subsystem is solved, the remaining coefficients enter only through linear or affine-linear equations with time-dependent coefficients determined by $\{\bP^i(t)\}_{i=1}^N$.

\begin{proposition}[Well-Posedness]\label{general_wp}
The well-posedness of the $N$-coupled ODE system \eqref{general_ode_sys} is determined by the existence and uniqueness of the Riccati subsystem $\bP^i$, $i = 1,\ldots,N$. Define 
\begin{align}\label{QSK_0}
\calC_3 &:= \diag{c_3^1\be_1\be_1^\top,\ldots,c_3^N\be_N\be_N^\top}\in\bbR^{N^2\times N^2} \!,\quad
\calS := \diag{\frac{2}{d_1}\bM\be_1\be_1^\top,\ldots,\frac{2}{d_N}\bM\be_N\be_N^\top}\in\bbR^{N^2\times N^2},\nonumber\\
\calC_4 &:= \diag{c_4^1\be_1\be_1^\top,\ldots,c_4^N\be_N\be_N^\top}\in\bbR^{N^2\times N^2},
\end{align}
and let $\bB^i\in\bbR^{N\times N}$, $i=1,\ldots,N$ be the matrices with entries
\begin{equation}\label{B_entries}
b^i_{jl} :=
\frac{2c_1^i}{d_jd_l}
\Big(
m_{ij}\be_j^\top(\bW\bM)\be_l
+
\be_i^\top(\bW\bM)\be_j\,m_{il}
\Big)
+
\frac{4c_2^i}{d_jd_l}m_{ij}m_{il},
\end{equation}
where $m_{ij}$ is the $(i,j)$-th entry of $\bM$.
Then the condition $T < \frac{1}{2N\sqrt{\|\calC_3\|\beta}}\big[\pi - 2\tan^{-1}\big(\frac{N \sqrt{\beta}\|\calC_4\|}{\sqrt{\|\calC_3\|}}\big)\big]$, where  $\beta = \|\calS\|+\|\calS^\top\| + \max_{1\leq i\leq N}\| \bB^{i} \|$, $\|\cdot\|$ being any matrix-induced norm, is sufficient for \eqref{general_ode_sys} to be well-posed on $[0,T]$. 
\end{proposition}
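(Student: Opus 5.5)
The plan rests on the triangular structure noted after Theorem~\ref{General Theorem}. Once $\{\bP^i\}_{i=1}^N$ is known on $[0,T]$, the equations for $\bp^i$ and $p_0^i$ are linear in $(\bp^i,p_0^i)$, since $\bk_1,\bk_4$ are affine in the $\bp^j$. The equations for $\br^i,r_0^i$ are then linear, since $\bk_3,\bk_6$ are affine in the $\br^j$. Finally $u^i$ is obtained by a quadrature. All of these have coefficients continuous in $t$ as long as $\bP^i$, $\theta$, $\zeta^i$, $\ba$, $\bb$ are continuous. Linear ODEs with continuous coefficients have unique global solutions on $[0,T]$. So well-posedness of \eqref{general_ode_sys} reduces to existence and uniqueness on $[0,T]$ of the autonomous Riccati subsystem for $\{\bP^i\}$.

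\emph{Step 1: rewrite the $\bP^i$ subsystem.} Using $\bK_2=\sum_j \frac{2}{d_j}\bM\be_j\be_j^\top\bP^j$, I will rewrite the right-hand side of $-\dot\bP^i$ as
\[
c_3^i\be_i\be_i^\top+\sum_{j,l} b^i_{jl}\,\bP^j\be_j\be_l^\top\bP^l-\sum_j\Big(\bP^i\tfrac{2}{d_j}\bM\be_j\be_j^\top\bP^j+\bP^j\big(\tfrac{2}{d_j}\bM\be_j\be_j^\top\big)^\top\bP^i\Big).
\]
The quadratic form comes from $\bk_2^i=\sum_j\frac{2}{d_j}m_{ij}\be_j^\top\bP^j$ and $\bk_5^i=\be_i^\top\bW\bK_2$. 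After symmetrization, the $c_1^i$ and $c_2^i$ terms give exactly the entries \eqref{B_entries}. Equivalently, in the lifted variable $\mathcal{P}=\diag{\bP^1,\ldots,\bP^N}\in\bbR^{N^2\times N^2}$, the data are $\calC_3$, $\calS$, the $\bB^i$, and $\mathcal{P}(T)=\calC_4$. The right-hand side is a polynomial, hence locally Lipschitz. Picard--Lindelöf then gives a unique solution on some $(T-\delta,T]$, and this extends backward as long as the solution stays bounded.

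\emph{Step 2: a priori bound by scalar comparison.} In backward time $\tau=T-t$, set $y(\tau)=\max_i\|\bP^i(T-\tau)\|$, or equivalently $\|\mathcal{P}\|$ for the induced norm. Bounding each of the three groups of terms in the display with submultiplicativity, I aim for the differential inequality
\[
D^+y\le \|\calC_3\|+N^2\beta\,y^2,\qquad y(0)\le\|\calC_4\|,
\]
with $\beta=\|\calS\|+\|\calS^\top\|+\max_i\|\bB^i\|$. The comparison lemma then bounds $y$ by the solution of $z'=a+bz^2$, $z(0)=\|\calC_4\|$, with $a=\|\calC_3\|$ and $b=N^2\beta$. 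That solution is
\[
z(\tau)=\sqrt{a/b}\,\tan\big(\sqrt{ab}\,\tau+\tan^{-1}(z(0)\sqrt{b/a})\big).
\]
It is finite precisely for
\[
\tau<\frac{\pi/2-\tan^{-1}(N\sqrt\beta\|\calC_4\|/\sqrt{\|\calC_3\|})}{N\sqrt{\|\calC_3\|\beta}},
\]
which is the stated condition on $T$. Hence no blow-up occurs on $[0,T]$, the continuation argument gives a unique solution there, and Step 0 finishes the proof.

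\emph{Main obstacle.} The delicate point is Step 2: checking that the quadratic terms really bound by $N^2\beta y^2$ with this exact $\beta$. The sum over $j$ in the $\calS$-terms contributes $N$ terms. The double sum with $\bB^i$ has $N^2$ terms, and I expect to control it through $\|\bB^i\|$ together with the block structure $\sum_{j,l}\bP^j\be_j b^i_{jl}\be_l^\top\bP^l=[\bP^1\be_1,\ldots]\bB^i[\ldots]^\top$, so that its norm is at most $N\|\bB^i\|y^2$. The constants then need to be collected so that the combined factor is at most $N^2\beta$. I would first try working directly with the lifted block-diagonal $\mathcal{P}$, where $\calS$ acts blockwise, and accept slack in the constants if needed, since the condition is only claimed to be sufficient. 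The case $c_3^i=0$ is degenerate: there the linear comparison $z'=bz^2$ should be used, or the bound read in the limit $\|\calC_3\|\to0$.
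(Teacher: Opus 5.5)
Your proposal is correct and follows essentially the paper's route: you reduce to the autonomous $\bP^i$ Riccati block via the triangular structure, bound the lifted block-diagonal system by $\|\calC_3\|+N^2\beta\|\calP\|^2$, and compare with the scalar Riccati $z'=a+bz^2$, whose tangent solution gives exactly the stated horizon. The only differences are that the paper imports this last comparison step from Papavassilopoulos's proposition rather than writing it out, and one small correction to your Step~1: since $\bk_5^i=\be_i^\top\bW\bK_2$, the $c_1^i$ cross term comes out with coefficient $m_{ij}\,\be_i^\top(\bW\bM)\be_l$ rather than the $m_{ij}\,\be_j^\top(\bW\bM)\be_l$ printed in \eqref{B_entries} (the two agree when $\bW=\bJ_N$), so your ``exact'' match holds only after this index fix, which changes the value of $\beta$ but not the argument.
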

The proof of Proposition \ref{general_wp} is given in Appendix \ref{General Theorem Proof}.
\begin{remark}
When $N=1$, the game reduces to a stochastic control problem. In this case, all quantities in \eqref{intermediates} are scalar and there is  system of 6 ODEs.

For $N$ agents, the total number of ODEs to be solved is $N^3 + 2N^2 + 3N$.
Indeed, for each agent $i$, the system \eqref{general_ode_sys} requires solving for the scalar 
$p^{i}_{0}(t)\in\mathbb{R}$, the vector
$\bp^i(t)\in\mathbb{R}^N$, each entry of the matrix 
$\bP^i(t)\in\mathbb{R}^{N\times N}$, the scalar
$r_0^i(t)\in\mathbb{R}$, the vector
$\br^i(t)\in\mathbb{R}^N$,
and the scalar $u^i(t)\in\mathbb{R}$,
leading to $N^2+2N+3$ ODEs per agent and hence $N^3+2N^2+3N$ coupled equations in total.

\end{remark}

In our experiments, we solve the coupled ODE system \eqref{general_ode_sys} using a standard explicit Runge-Kutta method (RK4) with a sufficiently small time step. The above approach can handle up to $\sim 50$ distinct agents (corresponding to $\sim 130,000$ equations). 

\subsection{The Homogeneous Case} \label{homo_sys}

In this section, we consider the homogeneous agent case which considerably reduces the number of ODEs to be solved. Specifically, we set 
\begin{equation}\label{eq:homo-condition}
    \bW = \bJ_N, \; \sigma^i = \sigma, \; \rho^i = \rho, \; a^i(t) = a(t), \; b^i(t) = b(t), \; \bar P^i = \bar P, \;  c_m^i = c_m \; (m = 1, 2, 3, 4), \; \zeta^i(t) = \zeta(t),
\end{equation}
for $i=1,2,3, \ldots,N$. Under these assumptions, all model parameters are identical across agents, and so are their game values. We therefore omit the superscript $i$ when no confusion arise.  We now state the main result for this case.

\begin{theorem}[Homogeneous case] \label{Homo_case}
Assuming~\eqref{eq:homo-condition}, the game value $\tilde V^i(t,q, \bs)$ for agent $i$ takes the form
\begin{align}\label{homoHJBE}
\tilde V^i(t,q, s^i, \bs^{-i}) =\, & 
\tilde p_1(t)\, q^2 + 2 \tilde p_2(t)\, s^i q + 2 \tilde p_3(t)\, \mathbf{1}_{N-1}^\top \bs^{-i} q + \tilde p_4(t)\, (s^i)^2 + 2 \tilde p_5(t)\, \mathbf{1}_{N-1}^\top s^i \bs^{-i} \nonumber\\[1ex]
& + (\bs^{-i})^\top ((\tilde p_7(t) - \tilde p_6(t)) \bI_{N-1} + \tilde p_6(t)\, \bJ_{N-1}) \bs^{-i} \nonumber \\
 &+ \tilde r_1(t)\, q + \tilde r_2(t)\, s^i + \tilde r_3(t)\, \mathbf{1}_{N-1}^\top \bs^{-i} + \tilde{u}(t),
\end{align}
with terminal condition $\tilde V^i(T,q, s^i, \bs^{-i}) = c_4(s^i-\zeta(T))^2$, where $\bs^{-i} = [\ldots, s^{i-1},s^{i+1}, \ldots]^{\top} \in \bbR^{N-1}$. 

The functions $\{\tilde p_1, \ldots, \tilde p_7, \tilde r_1, \ldots, \tilde r_3, \tilde u\}$ solve a coupled ODE system
\begin{align}\label{homo_ode_system}
\left\{
\begin{aligned}
    -\dot{\tilde p}_1(t) &= -c_1 g_1(t)\, g_5(t) + c_2 g_1^2(t) - 2\kappa \tilde p_1(t) + \bh_1(t)^\top \bh_5(t), \\
 -\dot{\tilde p}_2(t) &= -\frac{1}{2}c_1 \big( g_1(t)g_6(t) + g_2(t) g_5(t) \big) + c_2\, g_1(t) g_2(t) - \kappa\tilde p_2(t) + \frac{1}{2}\bh_1(t)^\top \bh_6(t) + \frac{1}{2}\bh_2(t)^\top \bh_5(t), \\
  -\dot{\tilde p}_3(t) &= -\half c_1 \big( g_1(t) g_7(t) + g_3(t) g_5(t) \big) + c_2 g_1(t)g_3(t) - \kappa\tilde p_3(t)  + \tilde p_2(t)g_3(t) + \tilde p_3(t) g_2(t) \\
  & \quad + (N-2)\tilde p_3(t) g_3(t) + g_1(t)\tilde p_5(t) + g_1(t) \tilde p_7(t) + (N-2)g_1(t) \tilde p_6(t) + \tilde p_5(t)b(t) + \tilde p_7(t) b(t) \\
  & \quad + (N-2)\tilde p_6(t) b(t), \\
 -\dot{\tilde p}_4(t) &= -c_1 g_2(t)g_6(t) + c_2 g_2^2(t) + c_3 + \bh_2(t)^\top \bh_6(t) , \\
 -\dot{\tilde p}_5(t) &= -\half c_1 \big( g_2(t) g_7(t) + g_3(t) g_6(t) \big) + c_2 g_2(t)g_3(t) +  \tilde p_4(t) g_3(t) + \tilde p_5(t)g_2(t) + (N-2)\tilde p_5(t) g_3(t)  \\
&\quad +  g_2(t) \tilde p_5(t) + g_3(t) \tilde p_7(t) + (N-2) g_3(t)\tilde p_6(t),  \\
-\dot{\tilde p}_6(t) &= -c_1 g_3(t) g_7(t) + c_2 g_3^2(t) + 2\tilde p_5(t) g_3(t) + 2\big( \tilde p_6(t) g_2(t) + \tilde p_7(t) g_3(t) + (N-3) \tilde p_6(t) g_3(t) \big),  \\
-\dot{\tilde p}_7(t) &= -c_1 g_3(t) g_7(t) + c_2 g_3^2(t)+ 2\tilde p_5(t) g_3(t) + 2\big( \tilde p_7(t) g_2(t) + (N-2) \tilde p_6(t) g_3(t) \big), 
\end{aligned}
\right. \\
\left\{
\begin{aligned}\label{homo_ode_system_r}
 -\dot{\tilde r}_1(t) &= \bar{P}  g_1(t) - c_1 \big( g_4(t)  g_5(t) + g_1(t) g_8(t) \big) 
+ 2c_2 g_1(t) g_4(t)  + \big( 2\kappa \tilde p_1(t) \theta(t) - \tilde r_1(t) \kappa \big) \\
& \quad + \big( \bh_1(t)^\top \bh_8(t) + \bh_4(t)^\top \bh_5(t) \big),  \\
 -\dot{\tilde r}_2(t) &= \bar{P}  g_2(t) - c_1 \big( g_4(t) g_6(t) + g_2(t) g_8(t) \big) 
+ 2c_2 g_2(t) g_4(t) - 2c_3 \zeta(t)  +2\kappa\tilde p_2(t) \theta(t) \\
& \quad + \big( \bh_2(t)^\top \bh_8(t) + \bh_4(t)^\top \bh_6(t) \big), \\
-\dot{\tilde r}_3(t) &= \bar{P} g_3(t) 
- c_1 \big( g_4(t)  g_7(t) + g_3(t)  g_8(t) \big) 
+ 2c_2  g_3(t) g_4(t) 
+ 2\kappa  p_3(t) \theta(t)  +  \tilde r_2(t) g_3(t) + \tilde r_3(t)  g_2(t) \\
& \quad + (N-2) \tilde r_3(t)  g_3(t)  +\big( g_4(t) + a(t) \big) \big( 2\tilde p_5(t) + 2\tilde p_7(t) + 2(N-2) \tilde p_6(t) \big), \\
-\dot{\tilde{u}}(t) &= 
\bar{P} g_4(t) 
- c_1 g_4(t) g_8(t) 
+ c_2 g_4(t)^2 
+ c_3 (\zeta(t))^2  + \tilde r_1(t) \kappa \theta(t) 
+ \bh_4(t)^\top \bh_8(t) 
+ (\sigma^0)^2  \tilde p_1(t) \\
& \quad + 2\sigma^0\sigma\rho\big(2 \tilde p_2(t) + 2(N-1)\tilde p_3(t)\big) + \sigma^2\rho^2\big(2(N-1)\tilde p_5(t)+(N-1)(N-2)\tilde p_6(t)\big) \\
&\quad + \sigma^2(1-\rho^2) \big( \tilde p_4(t) + (N-1)\, \tilde p_7(t) \big),
\end{aligned}
\right.
\end{align}
with terminal conditions:
\begin{align} \label{homo_terminal}
\begin{aligned}
&\tilde p_1(T) = 0, \quad \tilde p_2(T) = 0, \quad \tilde p_3(T) = 0, \quad \tilde p_4(T) = c_4, \quad \tilde p_5(T) = 0, \quad \tilde p_6(T) = 0, \\
& \tilde p_7(T) = 0, \quad \tilde r_1(T) = 0, \quad \tilde r_2(T) = -2c_4\, \zeta(T), \quad \tilde r_3(T) = 0, \quad \tilde u(T) = c_4\, \zeta(T)^2.
\end{aligned}
\end{align}
The constant in \eqref{d,U,u} reduces to $d = c_1 + 2c_2$ and denoting $\eta_{0}=\frac{N\bar{P}}{c_1+2c_2}$ and $\eta_{1}=1 + \frac{Nc_1}{c_1+2c_2}$,
all other functions $\{g_1, \ldots, g_8, \bh_1, \bh_2, \bh_4, \bh_5, \bh_6, \bh_8 \in \bbR^N\} $ are defined as
\begin{align*}
    g_1(t) &:= \frac{c_1}{d\eta_{1}} + \frac{2Nc_1\, \tilde p_2(t)}{d^2\eta_{1}} - \frac{2\tilde p_2(t)}{d}, \qquad
    g_2(t) := \frac{2c_1\big( \tilde p_4(t) + (N-1)\tilde p_5(t) \big)}{d^2\eta_{1}}  - \frac{2\tilde p_4(t)}{d}, \\[1ex]
    g_3(t) &:= \frac{2c_1\big( \tilde p_4(t) + (N-1)\tilde p_5(t) \big)}{d^2\eta_{1}}  - \frac{2\tilde p_5(t)}{d},\quad\;
    g_4(t) := \frac{Nc_1\,\tilde r_2(t)}{d^2\eta_{1}} + \frac{c_1 \eta_{0}}{d\eta_{1}} - \frac{\tilde r_2(t) + \bar{P}}{d}, \\[1ex]
    g_5(t) &:= \frac{1 + \frac{2N\,\tilde p_2(t)}{d}}{\eta_{1}},\quad
    g_6(t) = g_7(t) := \frac{2\tilde p_4(t) + 2(N-1)\tilde p_5(t)}{d\eta_{1}},\quad
    g_8(t) := \frac{N\,\tilde r_2(t)}{d\eta_{1}} + \frac{\eta_{0}}{\eta_{1}}, \\[1ex]
    \bh_1(t) &:= \begin{bmatrix}
2\tilde p_2(t) \\
2\tilde p_3(t)\, \mathbf{1}_{N-1}
\end{bmatrix}, \quad
    \bh_2(t) := \begin{bmatrix}
2\tilde p_4(t) \\
2\tilde p_5(t)\, \mathbf{1}_{N-1}
\end{bmatrix}, \quad  
    \bh_4(t) := \begin{bmatrix}
\tilde r_2(t) \\
\tilde r_3(t)\, \mathbf{1}_{N-1}
\end{bmatrix}, \\
\bh_5(t) &:= (g_1(t)+b(t)) \mathbf{1}_{N}, \,
 \quad
\bh_6(t) := \begin{bmatrix}
    g_2(t) \\
    g_3(t)\, \mathbf{1}_{N-1}
\end{bmatrix}, \quad 
\bh_8(t) := (g_4(t) + a(t))\, \mathbf{1}_{N}.
\end{align*}
Consequently, the equilibrium BESS charging strategy of the $i$-th operator is 
\begin{align}\label{homo_optimal control} 
\hat{\alpha}^i(t, q, \bs) &=
g_1(t)q + g_2(t) s^i + g_3(t) \, \mathbf{1}_{N-1}^\top \bs^{-i} 
+ g_4(t).
\end{align}
\end{theorem}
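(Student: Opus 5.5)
The plan is to specialize Theorem~\ref{General Theorem} to the parameters \eqref{eq:homo-condition} and use exchangeability to cut the number of unknowns. Under \eqref{eq:homo-condition} the HJB system \eqref{general HJBE-PDE} and the terminal data are invariant under any permutation of the agents' labels, provided agent $i$'s value is relabelled accordingly. Assuming the Riccati system \eqref{general_ode_sys} has a unique solution on $[0,T]$ (cf.\ Proposition~\ref{general_wp}), uniqueness transfers this symmetry to the coefficients. Concretely, $\bP^i$ is invariant under permutations fixing $i$, and $\bP^{\pi(i)}$ is obtained from $\bP^i$ by conjugating with the permutation matrix of $\pi$; the same holds for $\bp^i$ and $\br^i$.

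Invariance under permutations fixing $i$ forces the following structure:
\begin{itemize}
\item $\bp^i=\tilde p_2\be_i+\tilde p_3(\mathbf 1_N-\be_i)$, and $\br^i$ has the analogous form with $\tilde r_2,\tilde r_3$;
\item $\bP^i$ has diagonal entry $\tilde p_4$ at $(i,i)$, entries $\tilde p_5$ in the rest of row and column $i$, diagonal entries $\tilde p_7$ at the other diagonal positions, and all remaining off-diagonal entries equal to $\tilde p_6$.
\end{itemize}
Matrices with this block pattern form an algebra. Substituting the pattern into the quadratic form \eqref{ansatz} gives exactly \eqref{homoHJBE}.

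Next I compute the equilibrium feedback. With $\bW=\bJ_N$ we get $d_i=c_1+2c_2=d$, and $\bI_N+\tfrac{c_1}{d}\bJ_N$ is always invertible. By Sherman--Morrison,
\begin{equation*}
\bM=\bI_N-\frac{c_1}{d\,\eta_1}\bJ_N,\qquad \eta_1=1+\frac{Nc_1}{d}.
\end{equation*}
From \eqref{homoHJBE}, $\partial_{s^j}V^j=2\tilde p_2 q+2\tilde p_4 s^j+2\tilde p_5\sum_{l\ne j}s^l+\tilde r_2$. Inserting this into \eqref{opt_alpha} and applying $\bM$ yields
\begin{equation*}
\hat\alpha^i=\frac{1}{d}\Big(v_i-\frac{c_1}{d\eta_1}\sum_j v_j\Big),
\end{equation*}
where $v_j$ denotes the $j$-th entry of $\bc_1 q-[\partial_{s^1}V^1,\ldots,\partial_{s^N}V^N]^\top-\ubarP$. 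The aggregate $\sum_j v_j$ involves $\sum_j s^j$ with coefficient $2(\tilde p_4+(N-1)\tilde p_5)$. Collecting coefficients of $q$, $s^i$, $\mathbf 1^\top\bs^{-i}$ and the constant gives $g_1,\dots,g_4$ and hence \eqref{homo_optimal control}. The aggregate charging rate $\mathbf 1^\top\hat\balpha$ determines $Q-\bW\hat\balpha$; this is how $g_5$, $g_6=g_7$ and $g_8$ (the homogeneous versions of $\bk_4,\bK_5,\bk_6$, up to sign) arise, while $\bh_5,\bh_8$ are the drifts $\hat\alpha^j+a+bq$ and $\bh_1,\bh_2,\bh_4$ are gradients of the ansatz.

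Finally, rather than reducing \eqref{general_ode_sys} entry by entry, I will substitute \eqref{homoHJBE} and \eqref{homo_optimal control} directly into \eqref{general HJBE-PDE}. At the optimum the Hamiltonian evaluates to
\begin{equation*}
\hat\alpha^i\bigl(\bar P-c_1(q-\mathbf 1^\top\hat\balpha)\bigr)+c_2(\hat\alpha^i)^2+\sum_j(\hat\alpha^j+a+bq)\,\partial_{s^j}V^i .
\end{equation*}
I then match the coefficients of $q^2$, $s^iq$, $s^jq$, $(s^i)^2$, $s^is^j$, $(s^j)^2$, $s^js^k$, $q$, $s^i$, $s^j$ and $1$.

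The main obstacle is the bookkeeping in the transport term $\sum_{j\ne i}\hat\alpha^j\,\partial_{s^j}V^i$. Since $\hat\alpha^j$ depends on $s^i$ through $g_3$ and on the other $s^k$ through $g_2,g_3$, the products generate the $(N-2)$ and $(N-3)$ combinatorial factors. Distinguishing the diagonal coefficient $\tilde p_7$ from the off-diagonal $\tilde p_6$ requires carefully counting $k=j$ versus $k\ne j$. I would organize this by writing $\hat\balpha=g_1q\mathbf 1+\bigl((g_2-g_3)\bI+g_3\bJ\bigr)\bs+g_4\mathbf 1$ and using the closure of the block-pattern algebra. The trace term is computed from $\Sigma\Sigma^\top$: its $q$-block gives $(\sigma^0)^2\tilde p_1$, the cross terms give $\sigma^0\sigma\rho$ times the sum of the $\bp$ entries, and the remaining terms give $\sigma^2\rho^2\mathbf 1^\top\bP^i\mathbf 1+\sigma^2(1-\rho^2)\mathrm{Tr}\,\bP^i$, which yields the $\dot{\tilde u}$ equation. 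The terminal conditions \eqref{homo_terminal} are read off from \eqref{general_terminal}.
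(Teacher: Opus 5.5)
Your proposal is correct and follows essentially the paper's route: permutation symmetry to reduce the quadratic ansatz, the rank-one form $\bM=\bI_N-\frac{c_1}{d\eta_1}\bJ_N$ to obtain $g_1,\dots,g_4$ (and $g_5,\dots,g_8$ from the net supply), then direct substitution into \eqref{general HJBE-PDE} and coefficient matching, with the transport term organized essentially as the paper does via $\bH_3,\bH_7$. Your uniqueness argument justifies the symmetric ansatz more carefully than the paper, which uses symmetry only as motivation.

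One caution on the last step. The cross terms carry $2\sigma^0\sigma\rho\,\mathbf 1_N^\top\bp^i$, not $\sigma^0\sigma\rho\,\mathbf 1_N^\top\bp^i$. Carried out correctly, the trace contributes $(\sigma^0)^2\tilde p_1+2\sigma^0\sigma\rho\big(\tilde p_2+(N-1)\tilde p_3\big)+\sigma^2\rho^2\big(2(N-1)\tilde p_5+(N-1)(N-2)\tilde p_6\big)+\sigma^2\big(\tilde p_4+(N-1)\tilde p_7\big)$ to $-\dot{\tilde u}$. This differs from the printed equation, which has an extra factor $2$ on the $\sigma^0\sigma\rho$ term and $\sigma^2(1-\rho^2)$ instead of $\sigma^2$ in front of $\tilde p_4+(N-1)\tilde p_7$. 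For example, at $N=1$, $\rho=1$ the printed version drops the $\sigma^2\tilde p_4$ term entirely. So you will not reproduce that line verbatim, although the discrepancy affects only $\tilde u$ and not the equilibrium controls.
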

The proof is deferred to Appendix \ref{General Theorem Proof}. 
This theorem shows that individual BESS controls are only a function of her own SOC $S^i$ and of the aggregate SOC of the others, $\sum_{j\neq i} S^j = \mathbf{1}_{N-1}^\top \bs^{-i}$. This structure arises from the fact that when all the cross-impact weights are the same, \eqref{d,U,u} implies that 
\begin{equation}\label{simp_M}
\bM=\bI_N-\frac{\diag \bd^{-1}\,\bc_1\,\mathbf{1}^\top}{1+\mathbf{1}^\top \diag \bd^{-1}\,\bc_1} = \bI_N- \frac{c_1}{2c_2+(N+1)c_1}\bJ_N ,
\end{equation}
which helps reducing equation \eqref{opt_alpha} to equation \eqref{homo_optimal control}. The following  two propositions are the counterpart of Proposition \ref{general_wp} and provide intervals where the above system of equations is guaranteed to be well-posed. See Appendix \ref{General Theorem Proof} for the detailed proof.

\begin{proposition}\label{homo_existence}
    The existence of solution for the homogeneous setting is determined by \\
    $\{\tilde p_4(t),\tilde p_5(t), \tilde p_6(t)\}$ which form a closed  Ricatti sub-system. Let
    \begin{equation}\label{gamma}
        \beta_{Hom} := \sqrt{\|\calM_1\|_2^2 + \|\calM_2\|_2^2 + \|\calM_3\|_2^2},
    \end{equation}
    where the matrices $\calM_1$, $\calM_2$, and $\calM_3$ are defined in \eqref{calM_matrix}-\eqref{calM} in the Appendix. Then \eqref{homo_ode_system}-\eqref{homo_ode_system_r} is well-posed on the time interval $[0,T]$ for any $T < \frac{1}{2\sqrt{\beta_{Hom} c_3}}\big[\pi - 2\tan^{-1}\big(\frac{ \sqrt{\beta_{Hom}} c_4}{\sqrt{c_3}}\big)\big]$.
\end{proposition}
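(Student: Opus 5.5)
Following Proposition~\ref{general_wp}, the plan has two parts: isolate the closed Riccati block $\{\tilde p_4,\tilde p_5,\tilde p_6\}$, then prove it has no blow-up on $[0,T]$ by comparison with a scalar Riccati equation whose explosion time is explicit.

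\emph{Closure of the block.} By the formulas in Theorem~\ref{Homo_case}, $g_2,g_3,g_6=g_7$ and $\bh_2,\bh_6$ depend only on $(\tilde p_4,\tilde p_5)$. Inspecting \eqref{homo_ode_system}, the equations for $\dot{\tilde p}_4,\dot{\tilde p}_5,\dot{\tilde p}_6$ then involve only $\tilde p_4,\tilde p_5,\tilde p_6$ and $\tilde p_7$. Since $\tilde p_6$ and $\tilde p_7$ have the same source terms and the same terminal value $0$, I will check whether their difference satisfies a closed linear ODE with zero terminal data, giving $\tilde p_7\equiv\tilde p_6$. If this identification fails, I will carry $\tilde p_7$ along as a fourth component; only the constants in $\calM_i$ change.

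Once the block $X(t)=(\tilde p_4,\tilde p_5,\tilde p_6)^\top$ is known on $[0,T]$, the remaining equations are triangular and linear with continuous coefficients. Specifically, $(\tilde p_2,\tilde p_3)$, then $\tilde p_1$, then $(\tilde r_2,\tilde r_3)$, then $\tilde r_1$, and finally $\tilde u$ solve linear ODEs. These have unique global solutions, so well-posedness reduces to the block.

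\emph{Quadratic form and norm bound.} Writing $\tau=T-t$, the block becomes $\dot X=\mathcal{C}+\mathcal{L}X+\Phi(X)$. Here $\mathcal{C}=(c_3,0,0)^\top$, with terminal value $X(0)=(c_4,0,0)^\top$. The map $\mathcal{L}$ is linear, and $\Phi$ is homogeneous quadratic with components $\Phi_k(X)=X^\top\calM_kX$. The matrices $\calM_k$ collect the bilinear coefficients from the products $g_ig_j$, $\tilde p\,g$, and $\bh_2^\top\bh_6$. Then $\|\Phi(X)\|_2\le\beta_{Hom}\|X\|_2^2$. I expect the definition in the appendix to make any linear part vanish or be absorbed, since the $g$'s are linear in $X$ with no constant term. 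Then $y(\tau)=\|X(\tau)\|_2$ satisfies $y'\le c_3+\beta_{Hom}y^2$ with $y(0)=c_4$.

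\emph{Comparison and the main obstacle.} By comparison with $z'=c_3+\beta_{Hom}z^2$, $z(0)=c_4$, whose solution is
\begin{equation*}
z(\tau)=\sqrt{c_3/\beta_{Hom}}\,\tan\bigl(\sqrt{c_3\beta_{Hom}}\,\tau+\tan^{-1}(c_4\sqrt{\beta_{Hom}/c_3})\bigr),
\end{equation*}
$X$ stays bounded as long as the argument of the tangent stays below $\pi/2$. The local solution then extends, by Picard--Lindelöf for the locally Lipschitz vector field, to all of $[0,T]$. The stated bound has an extra factor $\tfrac12$ in front, which is a safe sufficient condition: it may reflect symmetrized factors of $2$ in the quadratic terms or a cruder estimate, and I would match the paper's constant. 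The main obstacle is the bookkeeping in the quadratic-form step. One must write each right-hand side exactly as $X^\top\calM_kX$, including the $N$-dependent coefficients, and confirm that no unbounded linear part remains. If a linear term does remain, I would use a Grönwall-type modification, or absorb it into $\beta_{Hom}$ through the bound $\|X\|\le 1+\|X\|^2$.
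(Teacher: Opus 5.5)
Your proposal is correct and follows essentially the paper's proof. The paper likewise shows $\tilde p_6\equiv\tilde p_7$ from a homogeneous linear ODE for their difference with zero terminal data, and treats the remaining coefficients as triangular affine equations. It writes the time-reversed block as $\dot{\bar\bp}=(\bar\bp^\top\calM_k\bar\bp)_{k=1}^3+(c_3,0,0)^\top$ with no linear part, bounds $\|\dot{\bar\bp}\|_2\le\beta_{Hom}\|\bar\bp\|_2^2+c_3$, and invokes the same scalar Riccati comparison (via Papavassilopoulos) that you carry out explicitly.

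One small correction: there is no extra factor $\tfrac12$ to account for. Indeed, $\frac{1}{2\sqrt{\beta_{Hom}c_3}}\bigl[\pi-2\tan^{-1}(c_4\sqrt{\beta_{Hom}/c_3})\bigr]=\frac{1}{\sqrt{\beta_{Hom}c_3}}\bigl[\frac{\pi}{2}-\tan^{-1}(c_4\sqrt{\beta_{Hom}/c_3})\bigr]$, which is exactly the blow-up time of your comparison function $z$.
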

\begin{proposition}\label{parameters_prop}
    If $N \ge 5$ and $c_1 >0$, $c_2 \geq 0$, $c_3 \geq 0$, $c_4 > 0$, then the homogeneous system \eqref{homo_ode_system}-\eqref{homo_ode_system_r} is well-posed on $[0,T]$ for any $T > 0$. 
\end{proposition}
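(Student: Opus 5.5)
The plan is to exploit the triangular structure noted after Theorem~\ref{General Theorem} and in Proposition~\ref{homo_existence}. All coefficients other than the quadratic-in-$\bs$ block $(\tilde p_4,\tilde p_5,\tilde p_6,\tilde p_7)$ satisfy linear ODEs with coefficients that are continuous functions of that block. Such linear ODEs can never blow up on $[0,T]$. So it suffices to show that the Riccati subsystem for $(\tilde p_4,\tilde p_5,\tilde p_6,\tilde p_7)$, solved backward from $(c_4,0,0,0)$, stays bounded on every finite interval. Local existence and uniqueness is immediate since the right-hand side is polynomial. Global existence then follows from an a priori bound by the standard continuation argument. The whole proof therefore reduces to producing a compact invariant set, i.e.\ a bound that is uniform in the time-to-maturity.

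First I will rewrite the subsystem explicitly in terms of $x=\tilde p_4$, $y=\tilde p_5$, $z=\tilde p_6$, $w=\tilde p_7$, using $g_2,g_3,g_6=g_7$ from Theorem~\ref{Homo_case}. The key simplification is the sum $\sigma:=x+(N-1)y$, which is the only combination entering $g_6$. Note that $\frac{c_1}{d^2\eta_1}\cdot 2\sigma - \frac{2}{d}(\cdot)$ gives $g_2=\frac{2c_1\sigma}{d^2\eta_1}-\frac{2x}{d}$ and $g_3=\frac{2c_1\sigma}{d^2\eta_1}-\frac{2y}{d}$. I expect $\sigma$, and the differences $x-y$ and $w-z$, to satisfy nearly decoupled scalar Riccati equations of the form $-\dot\sigma = c_3 - A_N\sigma^2$ and $-\dot{(x-y)} = c_3 - B_N (x-y)^2 + (\text{cross terms})$. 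Here the leading quadratic coefficients $A_N,B_N$ come from the own-impact terms $-c_1 g g + c_2 g^2 + x\,g$. They are positive, which gives the ``good'' sign. The reason is that $x,y$ enter with the sign of a minimisation problem, which produces a damping $-\sigma^2$ rather than an explosive $+\sigma^2$. With the good sign, comparison with the scalar Riccati $-\dot v=c_3-Av^2$, $v(T)=c_4>0$, gives $0\le v\le \max(c_4,\sqrt{c_3/A})$ for all $t$. This is exactly the globally bounded case, in contrast with the $\tan$-type blow-up bound of Proposition~\ref{homo_existence}, which came from crude norm estimates ignoring signs.

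The main obstacle is the cross terms. These are the strategic-interaction terms proportional to $c_1/(d\eta_1)$ and to $(N-2)$ and $(N-3)$ multiples of $y g_3$ and $z g_3$. They could flip the sign of the effective quadratic coefficient, and the hypothesis $N\ge5$ must be what controls them. My first attempt is to compute the quadratic form in $(\sigma, x-y)$ appearing on the right-hand side. I will then check that it is negative definite (modulo the constant $c_3$) once $\eta_1 = 1+\frac{Nc_1}{c_1+2c_2}$ is large enough relative to the combinatorial factors. Since $\frac{Nc_1}{d\eta_1}\ge \frac{N}{N+1}$ uniformly in $c_2\ge0$, the check becomes a polynomial inequality in $N$ and in $\lambda=c_1/d\in(0,1]$. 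I expect this inequality to hold exactly for $N\ge5$, uniformly in $\lambda$. If definiteness fails, I will instead build an invariant box $\{0\le x\le X,\ |y|\le Y\}$. I will verify on each face that the vector field points inward, choosing $Y=\epsilon X$ with $\epsilon\sim 1/N$, and use $N\ge5$ to close the face inequalities. This yields bounded $x,y$.

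Once $x,y$ are bounded, the equations for $z$ and $w$ are linear in $(z,w)$ with bounded coefficients, because $z,w$ enter only multiplied by $g_2,g_3$. Hence $z$ and $w$ are bounded by Gr\"onwall. After that, $\tilde p_1,\tilde p_2,\tilde p_3$ and $\tilde r_1,\tilde r_2,\tilde r_3,\tilde u$ solve linear ODEs with bounded coefficients, so they are also bounded. The system \eqref{homo_ode_system}--\eqref{homo_ode_system_r} is therefore well-posed on $[0,T]$ for every $T>0$. The assumption $c_4>0$ is used to start the comparison at a positive terminal value. Together with $c_3\ge0$, it keeps $x$ and $\sigma$ nonnegative, so the favourable sign of the quadratic term is never lost.
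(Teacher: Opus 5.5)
Your reduction to the quadratic block and the continuation argument are fine. The central step fails, however: $(x,y)=(\tilde p_4,\tilde p_5)$ is not a closed subsystem, so you cannot first bound $x,y$ and then recover $z,w$ by Gr\"onwall.

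\textbf{The $\tilde p_5$ equation involves $\tilde p_6,\tilde p_7$.} In \eqref{homo_ode_system} the last two terms of the $\tilde p_5$ equation are $g_3\tilde p_7+(N-2)g_3\tilde p_6$. First, $\tilde p_6\equiv\tilde p_7$, because their difference solves a linear homogeneous ODE with zero terminal value. So this term is $(N-1)g_3\tilde p_6$, which is bilinear in $z$ and $(x,y)$. It comes from $\sum_{j\neq i}\partial_{s^j}V^i\,\hat\alpha^j$: the other agents' drift depends on $s^i$ through $g_3$, weighted by agent $i$'s curvature in the others' SOC. This is why $\calM_2$ has nonzero entries $m^{(2)}_{46},m^{(2)}_{56}$.

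\textbf{Consequence.} Neither your $\sigma=x+(N-1)y$ nor $x-y$ satisfies a $z$-free Riccati equation; $\sigma$, for instance, picks up $(N-1)^2g_3z$. The feedback $y\to z\to y$ is genuine, and the triangular structure you rely on does not exist at this level. The a priori bound must be obtained jointly for the triple $(\tilde p_4,\tilde p_5,\tilde p_6)$, with a constraint tying $z$ to $y$. The paper uses $-\frac{c_1}{Nc_1+2c_2}\tilde p_5\le\tilde p_6\le 0$ and checks the two $z$-faces separately.

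\textbf{The fallback box is not invariant.} The $\tilde p_4$ equation does not involve $z$. On the face $\tilde p_4=0$, in reversed time $\tau=T-t$, a direct computation gives
\[
\frac{d\tilde p_4}{d\tau}=c_3-\frac{4(N-1)(c_1+c_2)}{d^2\eta_1}\Big(2+\frac{(N-1)c_1^2}{d^2\eta_1}\Big)\tilde p_5^2 .
\]
This is strictly negative for $\tilde p_5\neq 0$ when $c_3=0$, which the hypothesis allows, or whenever $c_3$ is small relative to $Y^2$. So the vector field points out of $\{0\le x\le X,\ |y|\le Y\}$ through the face $x=0$.

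\textbf{What an invariant set must look like.} It has to force $\tilde p_5\to 0$ as $\tilde p_4\to 0$, that is, it must be a cone such as $0\le \tilde p_5\le \frac{c_1}{2(c_1+c_2)}\tilde p_4$, not a box with $|y|\le \epsilon X$. In the paper, $N\ge 5$ is what makes the field point inward on the face $\tilde p_5=\frac{c_1}{2(c_1+c_2)}\tilde p_4$ and on the face $\tilde p_6=-\frac{c_1}{Nc_1+2c_2}\tilde p_5$. Your proposal only anticipates that some inequality ``holds exactly for $N\ge5$'' without identifying or verifying it. So the step where the hypothesis is actually used is missing.

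\textbf{Minor.} $\frac{Nc_1}{d\eta_1}=\frac{Nc_1}{(N+1)c_1+2c_2}\le\frac{N}{N+1}$, the reverse of what you wrote, and it tends to $0$ as $c_2\to\infty$. A reduction relying on that lower bound would not be uniform in $c_2$.
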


\begin{remark}
    The proof of Proposition \ref{parameters_prop} relies on constructing a bounded region $\calI_N^{c_1,c_2,c_3,c_4}$ that contains the ODE trajectory. When $N < 5$, this construction does not apply and we have the weaker Proposition \ref{homo_existence}. The coefficients in the homogeneous Riccati system \eqref{homo_ode_system}-\eqref{homo_ode_system_r} can be interpreted according to the type of quadratic interaction they represent. Intuitively, the coefficient $\tilde p_4(t)$ is associated with the own-state quadratic term of the representative player, $\tilde p_5(t)$ measures the coupling between the representative player and the other players, and $\tilde p_6(t)$ measures the coupling among the other players. For small $N$, these coupling effects are relatively strong and are not sufficiently diluted across the population, which can produce a stronger feedback on the own-state coefficient $\tilde p_4(t)$.
\end{remark}

Under the homogeneous agents setting, the total number of differential equations to solve is $11$, independent of the number of agents $N$. In contrast, in the general case, the number of ODE equations is $\cO(N^3)$. Consequently, the homogeneity substantially lowers both computational and memory costs and allows to consider markets with arbitrary number of identical operators.

\section{Comparative Statics and Sensitivity Analysis}\label{sec:comparative}

\subsection{Model Illustration}\label{sec:illustration}

To illustrate our setup, we consider a toy case study with $N=8$ homogeneous BESS operators that are all hybrid-type. To construct the supply process $(Q_t)$ which is the primary determinant of market conditions, we roughly match to typical intra-day prices as observed in the California CAISO market circa 2026. CAISO has a very large share of renewable generators, primarily solar farms, and consistently experiences negative mid-day prices as well as  dominant evening price ramps. The corresponding $P_t$ exhibits a small peak in the early morning, a large trough in the afternoon and a second large peak in the evening. Taking supply elasticity $c_1=1$ and reverse-engineering leads to a sinusoidal average supply curve
\begin{align}\label{eq:sample-theta}
 \theta(t) = 30-14\sin \bigl(\frac{\pi}{12}t + \frac{16\pi}{24} \bigr)-7\sin \bigl(\frac{\pi}{6}t-\frac{7\pi}{24}\bigr). 
\end{align}
In \eqref{eq:sample-theta}, the implied daily TB (top-to-bottom) price range is $\$33.6$ with supply in the range of $[17.2,50.8]$, lowest average prices at 10:30am and peak at 6:30pm. For the solar self-generation we set
\begin{align}\label{eq:sample-mu}
\mu^i_t & = a^i(t) + b^i(t) Q_t, \qquad a^i(t) = 0.2 \sin(0.075\pi t - 2\pi/5) \vee 0 \\ 
b^i(t) &= 0.003(t-5 \frac{1}{3}) \cdot 1_{\{t \in[ 5 \frac{1}{3}, 8]\}} + 0.008 \cdot 1_{ \{ t \in [8,16] \}} - 0.003 (t-18 \frac{2}{3}) \cdot 1_{ \{t \in [16, 18 \frac{2}{3}] \}},
\end{align}
with correlation $\rho=0.6$ between idiosyncratic and grid-wide generation. This implies a maximum average self-generation of 0.6 GW at 11am and total daily self-generation of $\simeq 5$ GWh. There is no self-generation in the morning until 5:20am or at night after 6:40pm. For the BESS, we assume a total capacity of 10 GWh, with $\zeta(t) = 5, \;\forall t$.
The other parameters are listed in Table \ref{tab:parameters}. By Proposition \ref{parameters_prop}, the toy case setup admits a well-defined solution on the time horizon considered.

\begin{table}[!ht]
\caption{Model parameters used in numerical experiments of Section \ref{sec:comparative}. }
\label{tab:parameters}
\renewcommand{\arraystretch}{1.15}
\setlength{\tabcolsep}{3pt}
\centering
\begin{tabular}{p{4.3cm}p{3.6cm}|p{3.8cm}p{2.7cm}}
\hline
\textbf{Parameter} & \textbf{Value}
& \textbf{Parameter} &  \textbf{Value} \\ \hline
 Base price (\$) &  $\bar{P}= 50$
& Price sensitivity & $c_1 = 1$ \\[0.4ex]
 Control penalty& $c_2 = 0.1$
&Running SOC penalty  & $c_3 = 0.25$ \\[0.4ex]
 Terminal SOC penalty& $c_4 = 100$
& Horizon (hrs) & $T =24$ \\[0.4ex]
Dispatch target (GWh) & $\zeta(t) = 5, \;\forall  t \in [0,T]$
&Mean reversion rate  & $\kappa = 5$ \\[0.4ex]
 Correlation w/$Q$  & $\rho = 0.6$
& Supply volatility  & $\sigma^0 = 5$ \\[0.4ex]
Self-generation volatility &  $\sigma = 0.5$
& Average supply (GW)  & $\theta(t)$, see \eqref{eq:sample-theta} \\[0.4ex]
 Initial supply (GW) & $Q_0 = \theta (0)=23.43$
&Initial SOC (GWh) & $S_0=  5$ \\[0.4ex]
\hline
\end{tabular}

\end{table}

The left panel of Figure \ref{fig:baseline-state-intervals} visualizes one simulation of the realized market. The top row shows the individual controls $\alpha^i_t$ and SOC $S^i_t$ which evolve slightly differently due to the idiosyncratic shocks $\sigma^i dW^i_t$. The bottom-left panel shows the exogenous supply $Q_t$ driven by $\sigma^0 dW^0_t$ and the corresponding hybrid generation $\sum_i \mu^i_t$. Finally, the bottom-right panel shows the resulting power price $\hat P_t = \bar{P} - c_1( Q_t - \sum_i \hat\alpha^i_t)$. 

The right panel of Figure \ref{fig:baseline-state-intervals} shows the distribution of the same quantities (evaluated via $M=1000$ simulations). Since the agents are homogeneous, the distributions of $\hat\alpha^i_t, S^i_t$ are identical for $i=1,\ldots,N$. In our toy setup, net supply is highest during the midday and lowest around 6pm.
The BESS control countervails this pattern in order to buy low and sell high. As a result, the BESS charges in the first half of the day and then discharges in the afternoon and (less so) in the evening. The resulting prices are lowest in the late morning. Notably, the BESSs flatten the evening price peak, making $\E[\hat P_t]$ almost constant for $t\in[18,24]$. In particular, TB falls from \$33 without BESS to less than \$18 with the 8 operators, a reduction of 46.6\%.

Two other effects are notable in the behavior of $\hat\alpha^i_t$. First, due to the local generation, the BESS operation combines two objectives: selling its generated power $\mu^i_t$ (with the BESS acting as a ``pass-through'' for that additional supply), as well as engaging in energy arbitrage to charge when $\hat P_t$ is low and discharge when $\hat P_t$ is high. In Figure \ref{fig:baseline-state-intervals} this manifests itself in $\hat\alpha^i_t$ being mostly negative, as the BESS needs to sell all the (daytime-generated) local $\mu^i_t$. In order to clear out SOC capacity, the BESS discharges heavily in the morning (even though prices are not very high), so that it has headroom to absorb ``excess'' local supply mid-day. This effect is driven by the SOC constraints (parameter $c_3$). Second, the terminal penalty that drives $S^i_T \simeq 5$ leads to high variance in $\alpha^i_t$ in the last two hours of the day, as the BESS ``rushes'' to match the postulated terminal SOC and must adjust its strategy depending on how the realized trajectory of $\mu^i_t$ unfolded. Otherwise, the variance of $\hat\alpha^i_t$ is roughly constant throughout the day, consistent with the constant volatility of $\mu^i_t$ and $\{Q_t\}$.

\begin{figure}[!htb]
    \centering
    a)\includegraphics[width=0.47\textwidth]{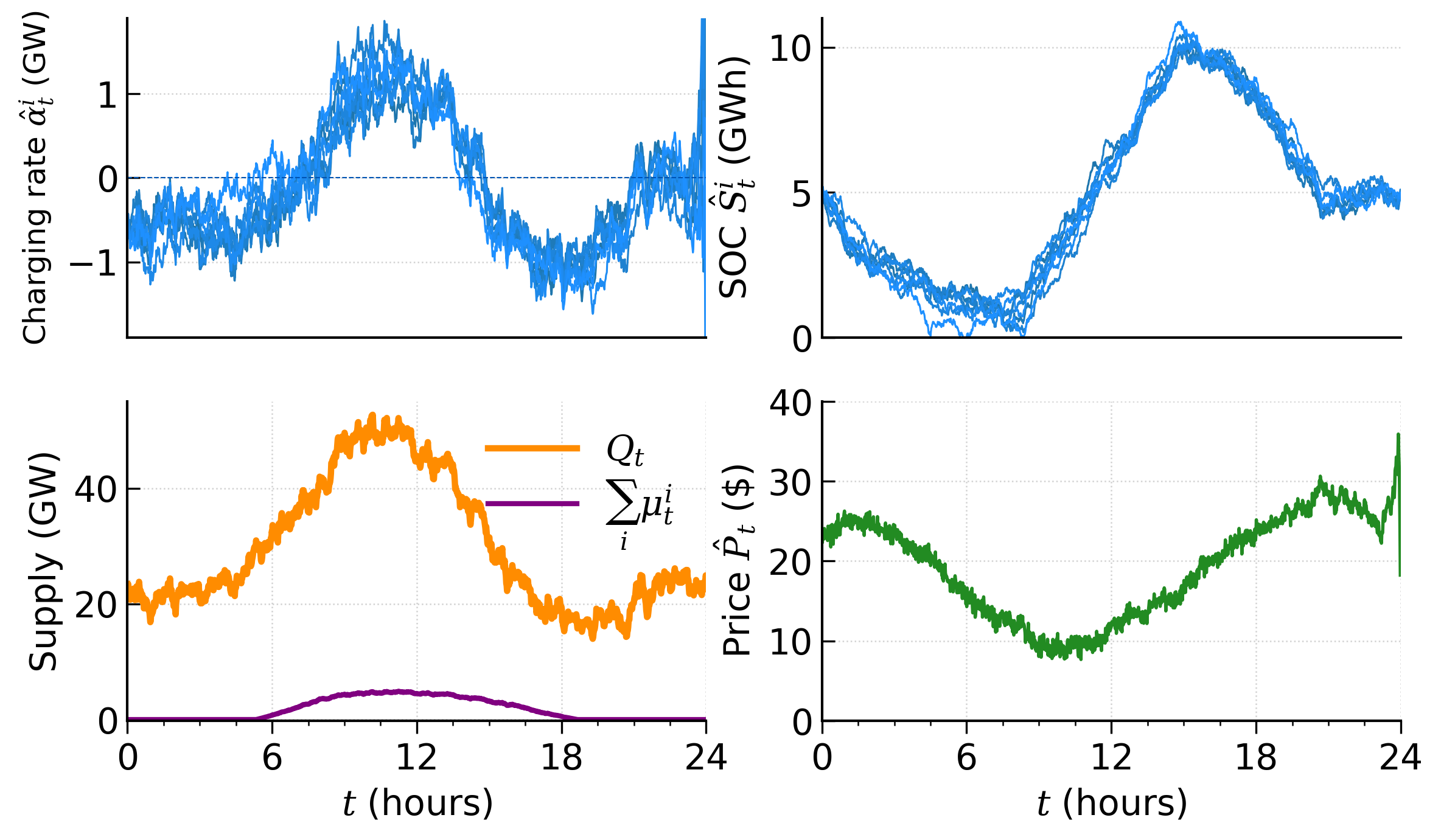}
    b)\includegraphics[width=0.47\textwidth]{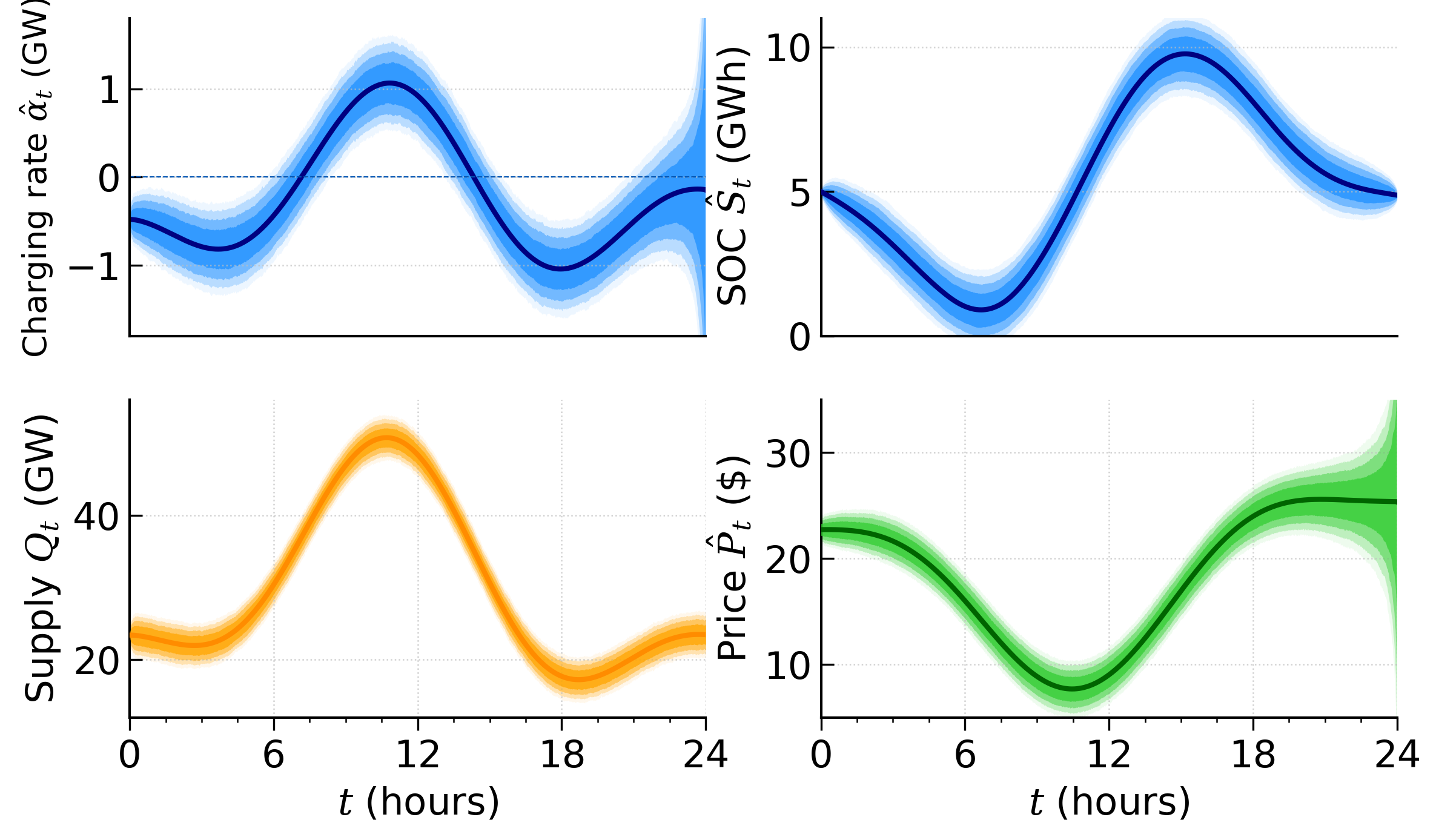} 
    \caption{ A market with $N=8$ homogeneous BESS operators, with parameters in Table \ref{tab:parameters}. \emph{Left:} illustrative scenario showing equilibrium $(\hat\alpha^i_t,\hat S^i_t)$ for each $i=1,\ldots,N$, as well as $Q_t, \sum_i (a^i(t)+b^i(t)Q_t)$ and $\hat{P}_t \equiv \hat{P}^i_t$, $\forall i$. \emph{Right:} distribution of same quantities across 1000 paths. The shaded bands denote the 60\%, 80\%, 90\% and 95\%-quantile regions.}
    \label{fig:baseline-state-intervals}
\end{figure}

\subsection{Comparative Statics}
To tease out the role and the impact of different parameters in our model, in this section, we present a series of comparative statics, whereby we vary one or two variables, keeping the others fixed. To summarize the behavior of the agents (assumed homogeneous, hence with identical functional forms of their controls $\alpha^i(t,q,\bs)$), we record the following summary statistics. First, we track the average total charge/discharge over the day,
\begin{align}\label{eq:TC}
TC = \E \Bigl[ \int_0^{T^\circ} |\hat\alpha^i_t| dt \Bigr].
\end{align}
Above $T^\circ = 21$ is a cut-off hour, in order for the metric not to get skewed by the (occasionally large) charging/discharging driven by the terminal condition, note the wide ``fan'' in the right panel of Figure \ref{fig:baseline-state-intervals}.

Second, we track the total storage utilization:
\begin{align}\label{eq:TS}
TS = \E \Bigl[ \max_{t \le T^\circ} \hat S_t - \min_{t \le T^\circ} \hat S_t \Bigr].
\end{align}
Third, we track the daily average price spread, known in the industry as top-to-bottom:
\begin{align}\label{eq:tb}
TB = \E \Bigl[ \max_{t \le T^\circ} \hat P_t - \min_{t \le T^\circ} \hat P_t \Bigr].
\end{align}

\noindent\textbf{Capacity constraint $c_2$ and energy constraint $c_3$.}
As we have introduced in  Section \ref{sec:objective},  the two running cost terms are governed by the capacity constraint parameter $c_2$, which discourages aggressive charging/discharging, and the energy constraint parameter $c_3$, which prevents the battery storage to be over empty or over full. These two parameters jointly affect the best response of the agent by limiting the charge and discharge rate as well as monitoring the battery state to not be far from the dispatch level. And therefore we want to examine the effect of the parameters separately by considering two scenarios: when $c_2 \gg c_3$, by setting $c_3$ extremely small and $c_2$ extremely big, we disregard the energy constraint and focus only on the capacity constraint; and when $c_3 \gg c_2$, we impose little limit on the charge/discharge rate while only governing the battery storage. Figure \ref{fig:c2_c3} illustrates the effect of $c_2$ and $c_3$ alone on the choice of the agents' action as well as the process of the battery storage.

\begin{figure}[!htb]
    \centering
        \includegraphics[width=0.32\linewidth]{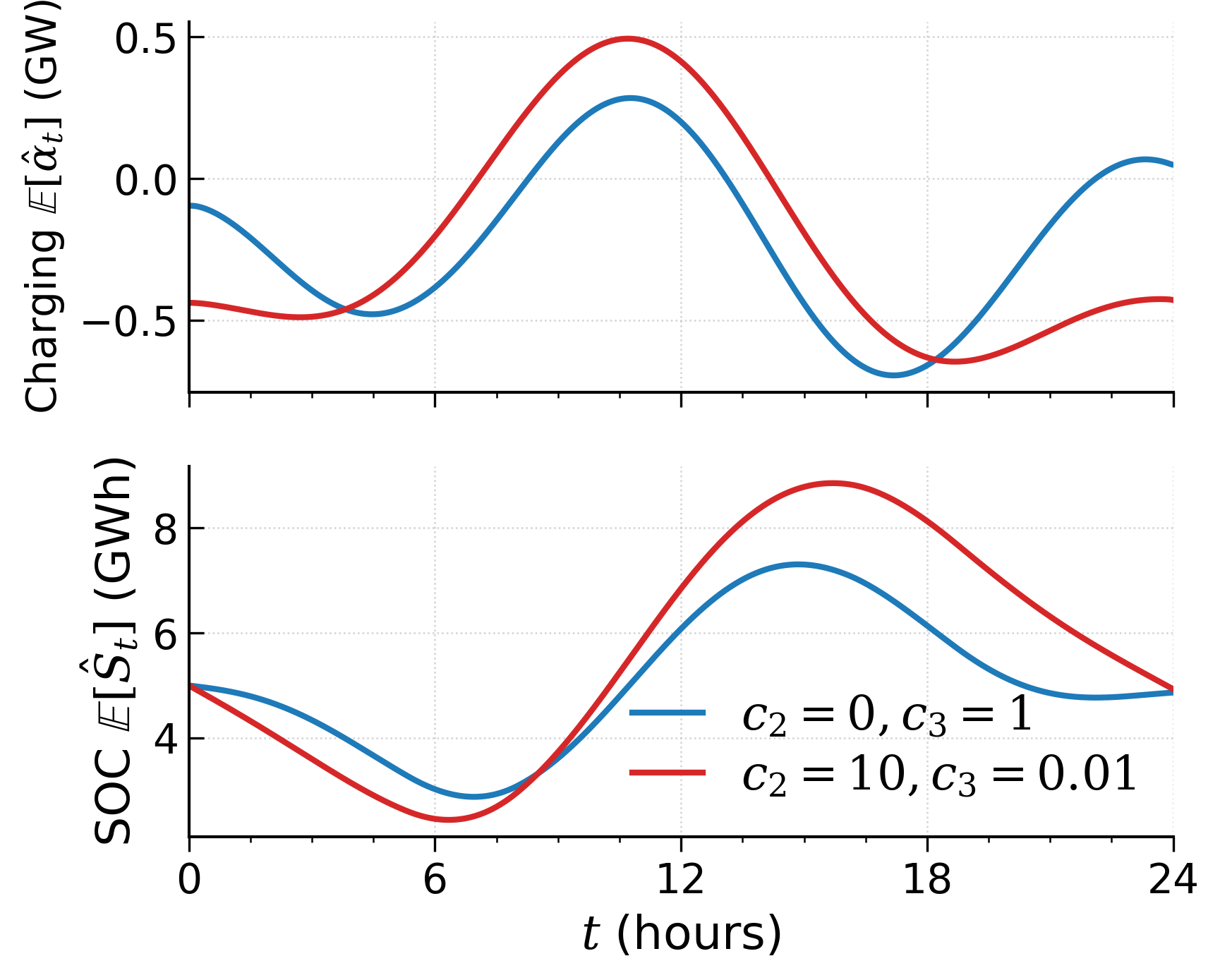}
        \includegraphics[width=0.32\linewidth]{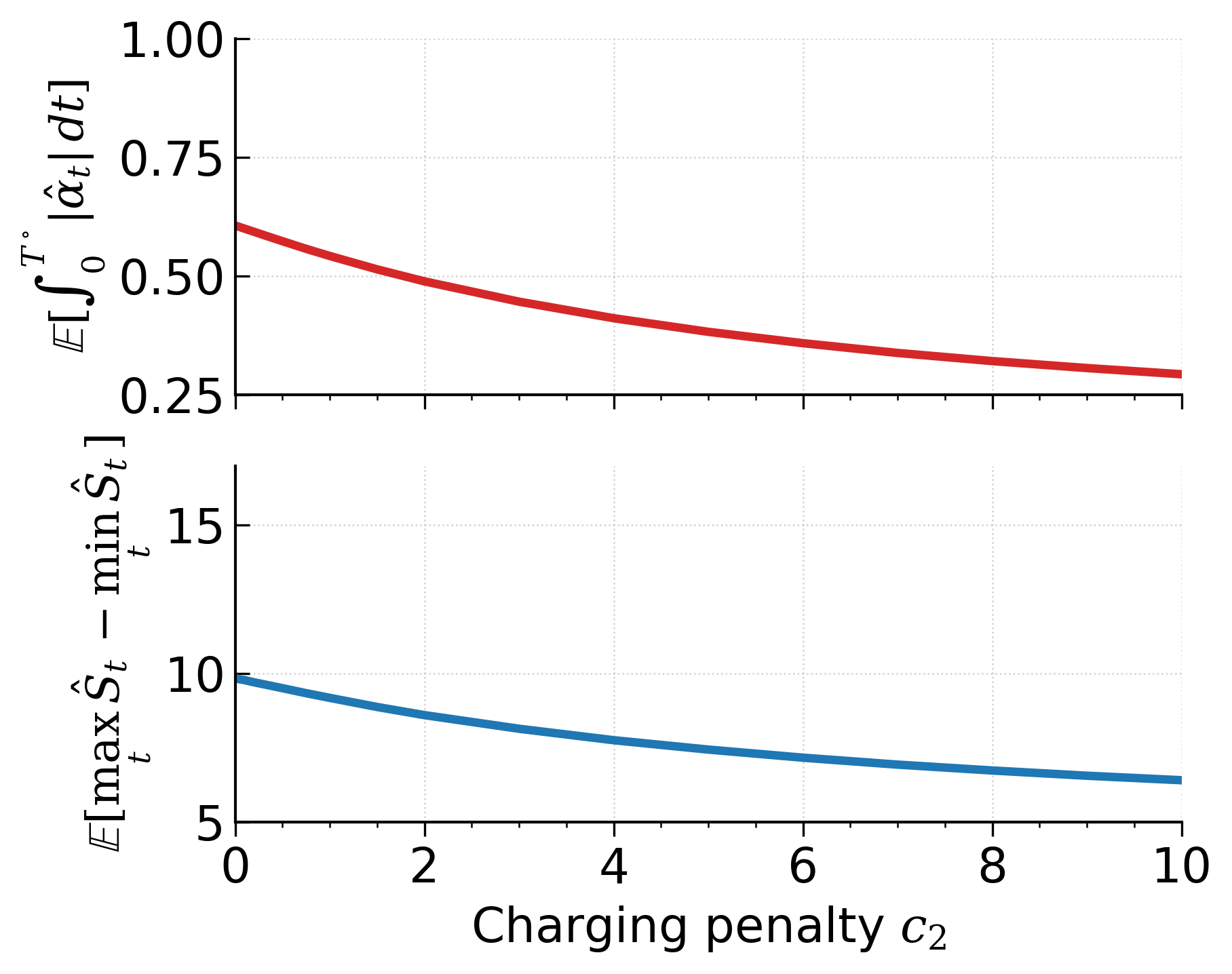}
        \includegraphics[width=0.32\linewidth]{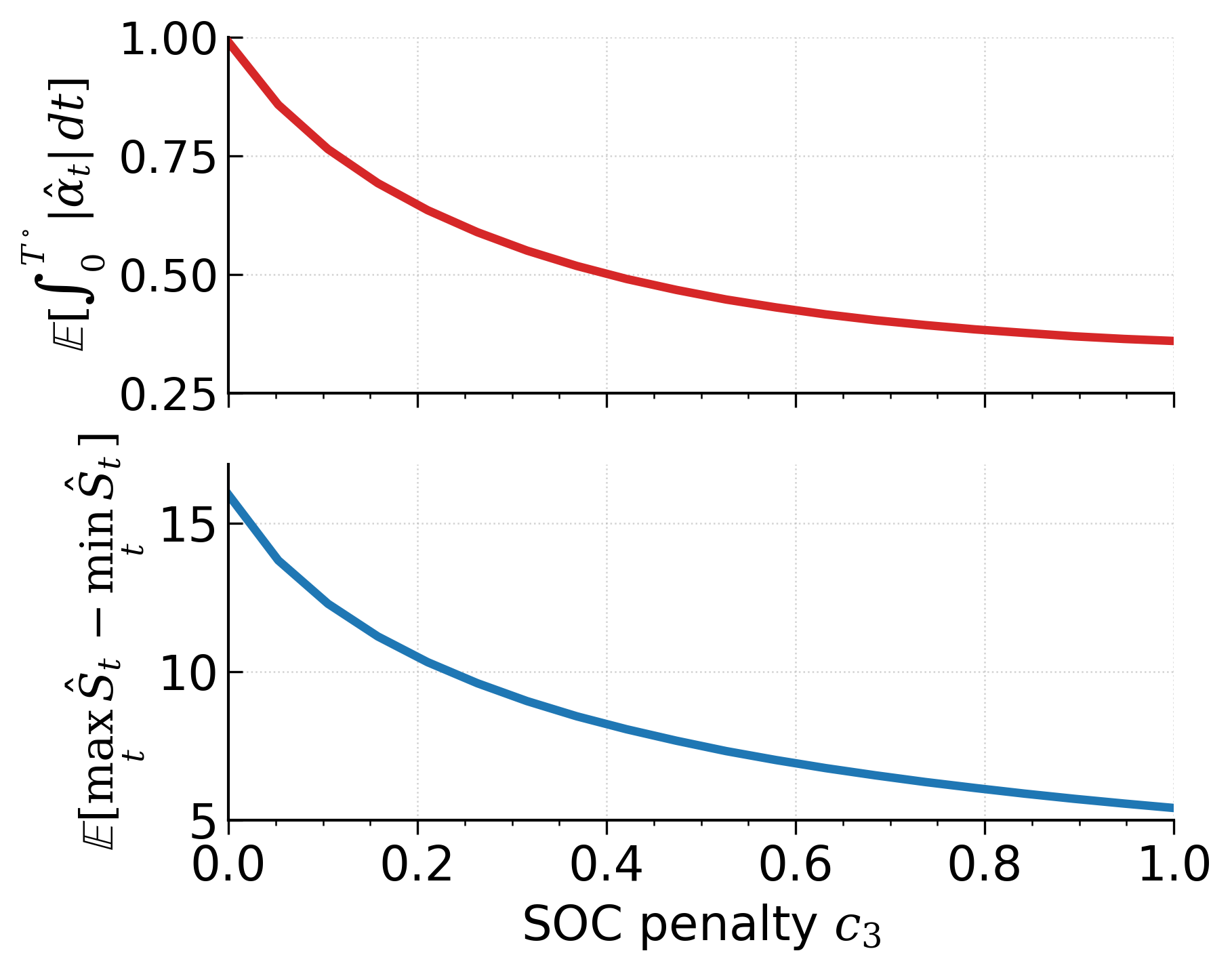}
  \caption{\emph{Left panel:}  $\E [\hat{\alpha}_t]$ and  $\E[\hat S_t]$. \emph{Middle and Right:} Effect of $c_2$ and $c_3$ on  average total charging and discharging TC \eqref{eq:TC} and average range of SOC TS \eqref{eq:TS}, holding $c_3 = 0$ and $c_2 = 0$ respectively.}
  \label{fig:c2_c3}
\end{figure}

\noindent\textbf{Price sensitivity $c_1$.}
The price impact $c_1$ affects the sensitivity of the price to the total supply. Higher $c_1$ strengthens both the feedback effect of the individual $\alpha^i_t$ (which contributes a quadratic cost $c_1 (\alpha^i_t)^2$ just like $c_2 (\alpha^i_t)^2$) as well as the impact of others' charging $c_1 \sum_{j \neq i} \alpha_j \alpha_i$. These two effects lead to ambiguous impact on the total charging.

\begin{figure}[!htb]
    \centering
     \includegraphics[width=0.49\textwidth]{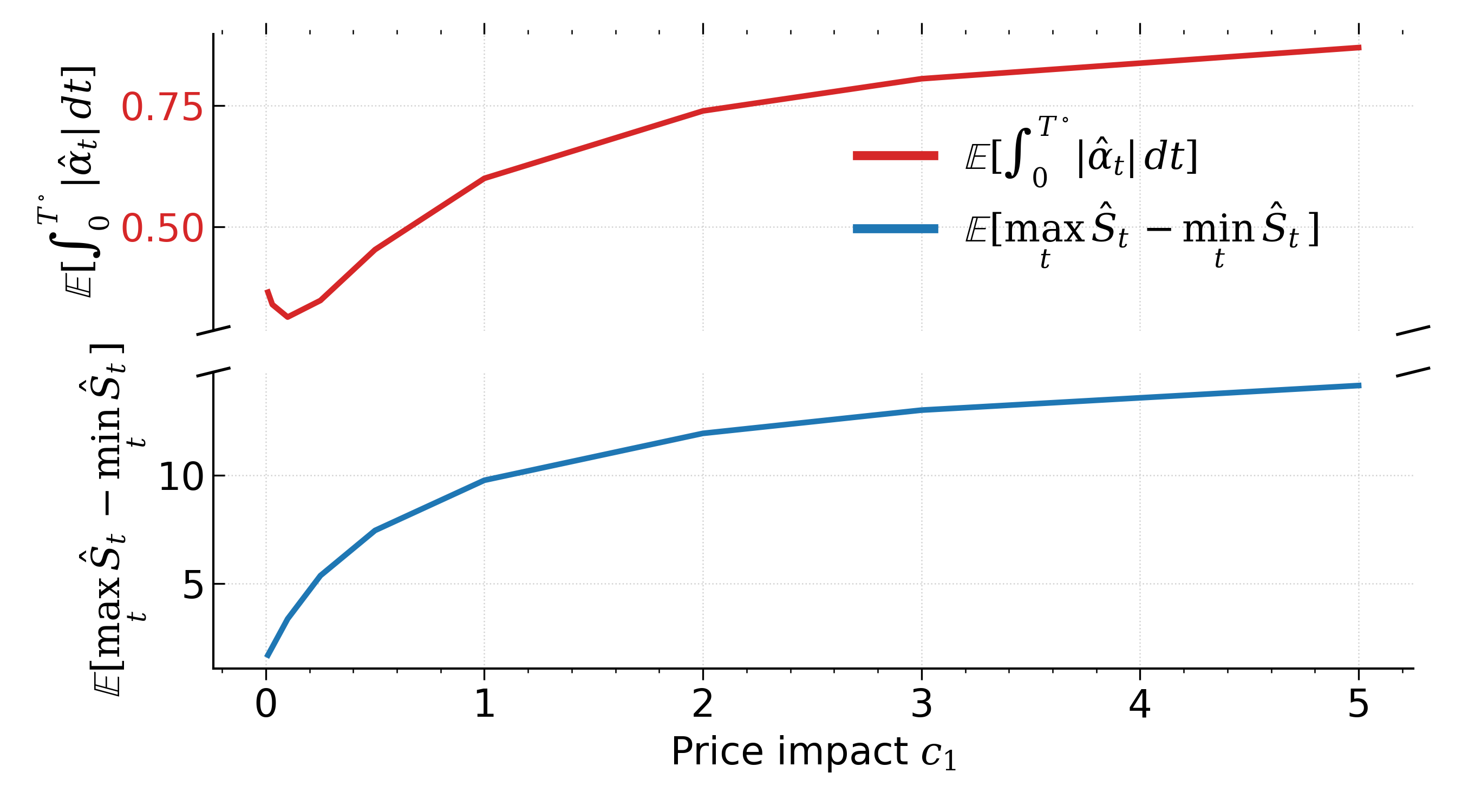}
    \includegraphics[width=0.49\textwidth]{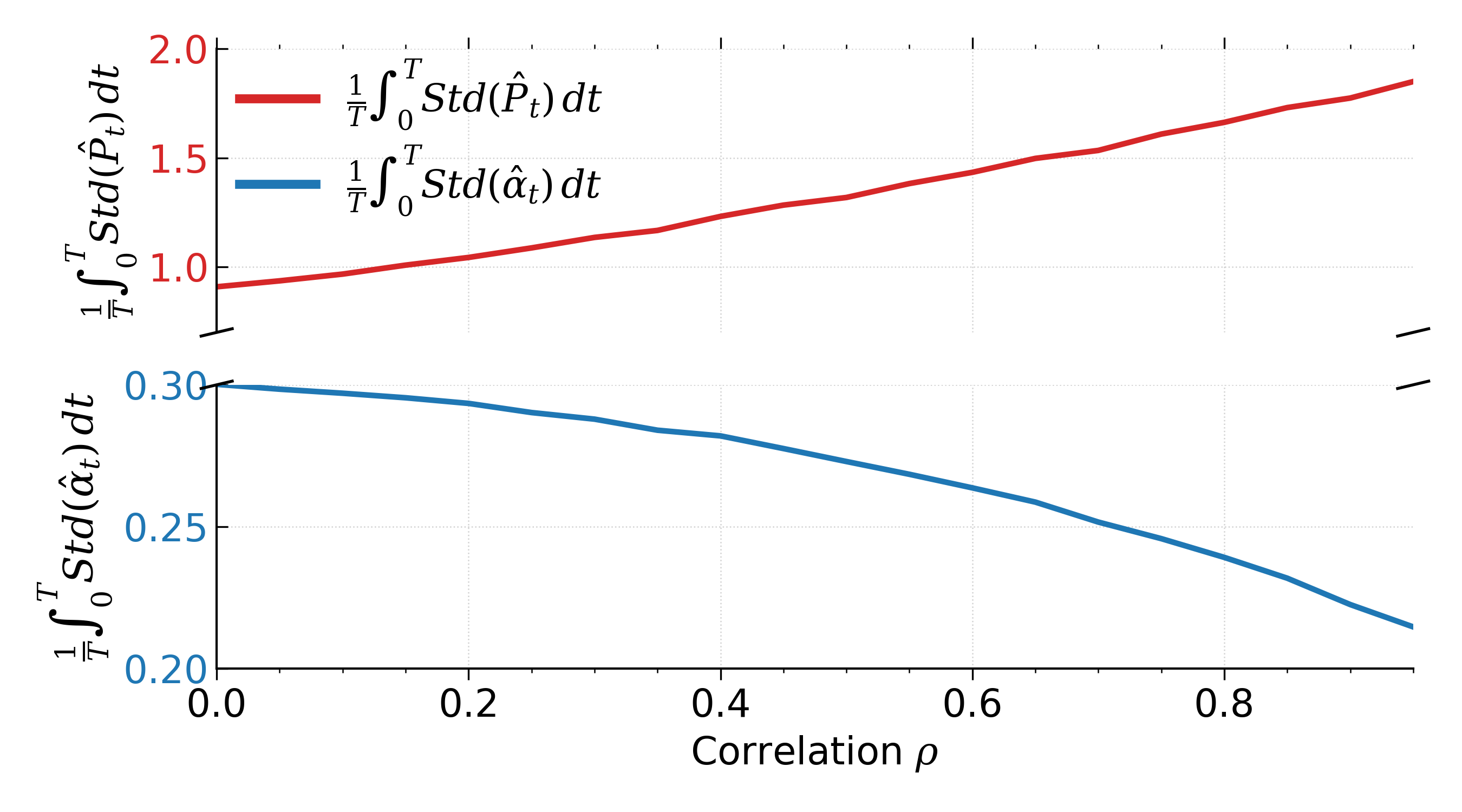}
    \caption{ \emph{Left panel:} effect of price impact $c_1$ on total charge/discharge TC \eqref{eq:TC} and on Range of SOC. \emph{Right:} Effect of $\rho$ on average $\Std(\hat\alpha_t^i)$ and $\Std(\hat P_t^i)$. We use the base setting of Section \ref{sec:illustration} with $N=8$ homogeneous agents. }
    \label{fig:c1_rho}
\end{figure}

\smallskip
\noindent\textbf{Correlation $\rho$ and SOC volatility $\sigma$.}\label{rho_effect}
To analyze the effect of the correlation parameter $\rho$ and the BESS volatility $\sigma$, we evaluate the mean and variance of the resulting equilibrium quantities.

\begin{proposition} \label{prop:homo_mean_control}
 Let $\{g_1(t),g_2(t),g_3(t),g_4(t)\}$ be the functions defined in Theorem~\ref{Homo_case} and set
 $$\tilde{g}(t) := g_2(t) + (N-1)g_3(t).$$
 Under the homogeneous-agent setting, the exogenous production $Q_t$, the equilibrium SOC $\hat S_t^i$, the equilibrium control $\hat \alpha_t^i$, and the equilibrium price $\hat P_t^i$ for agent $i$ have the following expectations:
 \begin{align}
 \E[Q_t] & = e^{-\kappa t}Q_0 + \kappa\int_0^t e^{-\kappa (t-s)}\theta(s)\,ds; \label{mean_Q} \\
  \E[\hat S_t^i] &  :=m_S(t) = e^{\int_0^t \tilde{g}(u)\,du}
\Big(
S_0  + \int_0^t
e^{-\int_0^s \tilde{g}(u)\,du}
\big((g_1(s)+b(s))\E[Q_s]+g_4(s)+a(s)\big)\,ds
\Big);\label{mean_S} \\
     \E[\hat\alpha_t^i] & = g_1(t)\E[Q_t] + \tilde{g}(t) m_S(t)+g_4(t);\label{mean_control} \\
     \E[\hat P_t^i] &= \bar{P} + c_1\big((Ng_1(t)-1)\E[Q_t] + N \tilde{g}(t)m_S(t) + Ng_4(t)\big).\label{mean_price}
 \end{align}
Due to homogeneity, the above expressions do not depend on the index $i$.

Denote by $\bar S_t = \frac{1}{N}\sum_{i=1}^N \hat S_t^i$ the average equilibrium SOC process, and 
define $C_{Q\bar S}(t) := \Cov(Q_t,\bar S_t)$,  and $V_{\bar S}(t) := \Var(\bar S_t)$. Then $\Var(Q_t) = V_Q(t) = (\sigma^0)^2\big(1 - e^{-2\kappa t}\big)/(2\kappa)$, and $C_{Q\bar S}(t)$ and $V_{\bar S}(t)$ satisfy 
 \begin{align}
     C_{Q\bar S}(t) &= \int_0^t e^{\int_s^t (\tilde{g}(u)-\kappa)\,du}\big((b(s)+g_1(s))\frac{(\sigma^0)^2}{2\kappa}(1-e^{-2\kappa s}) + \rho\sigma\sigma^0\big)\,ds,  \label{cov_Q_bar_S}\\ 
     V_{\bar S}(t) &= \int_0^t e^{2\int_s^t  \tilde{g}(u)\,du}\big(2(b(s) + g_1(s))C_{Q\bar S}(s) + \rho^2\sigma^2 + \frac{1-\rho^2}{N}\sigma^2\big)\,ds. \label{Var_bar_S}
 \end{align}
 Furthermore, the variances of the equilibrium control and prices, denoted by $\Var(\hat \alpha_t^i)$ and $\Var(\hat P_t^i)$, satisfy
 \begin{align}
     \Var(\hat \alpha_t^i) &= g_1^2(t)V_Q(t) + \tilde{g}(t)^2 V_{\bar S}(t) + 2g_1(t)\tilde{g}(t)C_{Q\bar S}(t) \nonumber \\
     &\quad + (g_2(t)-g_3(t))^2\sigma^2(1-\rho^2)\Big(1-\frac{1}{N}\Big)\int_0^t e^{2\int_s^t (g_2(u)-g_3(u))\,du}\,ds; \label{var_control}\\
     \Var(\hat P_t^i) &= c_1^2(1-Ng_1(t))^2V_Q(t) + N^2c_1^2 \tilde{g}(t)^2 V_{\bar S}(t) - 2Nc_1^2(1-Ng_1(t))\tilde{g}(t)C_{Q\bar S}(t).\label{var_price}
 \end{align}
\end{proposition}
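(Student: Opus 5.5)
The plan is to substitute the equilibrium feedback \eqref{homo_optimal control} of Theorem~\ref{Homo_case} into the state dynamics \eqref{S process}. This gives a closed linear Gaussian SDE system for $(Q_t,\hat S^1_t,\ldots,\hat S^N_t)$. All first and second moments then follow from linear ODEs solved by variation of constants.

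\textbf{Step 1: decomposition.} Write $\sum_{j\neq i}s^j = N\bar s - s^i$. The control becomes
\[
\hat\alpha^i = g_1 q + (g_2-g_3)s^i + Ng_3\bar s + g_4 .
\]
Decompose $\hat S^i_t = \bar S_t + D^i_t$, where $D^i_t := \hat S^i_t-\bar S_t$ and $D^i_0=0$, since all agents start at $S_0$. Averaging \eqref{S process} over $i$, and using $(g_2-g_3)+Ng_3=\tilde g$, gives
\[
d\bar S_t = \big(a+(b+g_1)Q_t+\tilde g\,\bar S_t+g_4\big)dt + \rho\sigma\,dW^0_t + \tfrac{\sigma\sqrt{1-\rho^2}}{N}\textstyle\sum_j dW^j_t .
\]
Subtracting this from \eqref{S process} gives
\[
dD^i_t=(g_2-g_3)D^i_t\,dt+\sigma\sqrt{1-\rho^2}\,\big(dW^i_t-\tfrac1N\textstyle\sum_j dW^j_t\big).
\]
The noise driving $D^i$ is jointly Gaussian with, and uncorrelated with, $W^0$ and $\sum_j W^j$. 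Hence $D^i$ is independent of $(Q,\bar S)$, and its quadratic-variation rate is $\sigma^2(1-\rho^2)(1-1/N)$.

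\textbf{Step 2: means.} Taking expectations in \eqref{Q process} gives \eqref{mean_Q} by the integrating factor $e^{\kappa t}$. By exchangeability $\E[\hat S^i_t]=\E[\bar S_t]=m_S(t)$. Then $m_S$ solves the scalar linear ODE
\[
\dot m_S=\tilde g\, m_S+(g_1+b)\E[Q]+g_4+a,
\]
which yields \eqref{mean_S}. Formula \eqref{mean_control} is immediate from the control. For \eqref{mean_price}, use $\sum_j\hat\alpha^j = N(g_1Q+\tilde g\bar S+g_4)$, because the deviations $D^j$ sum to zero. Substituting into $\hat P=\bar P-c_1(Q-\sum_j\hat\alpha^j)$ and taking expectations gives the result.

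\textbf{Step 3: second moments.} $V_Q$ is the standard Ornstein--Uhlenbeck variance. Apply It\^o's product rule to the centered $Q_t\bar S_t$ and to $\bar S_t^2$. The cross-variation $d\langle Q,\bar S\rangle=\rho\sigma\sigma^0dt$ and $d\langle\bar S\rangle=(\rho^2\sigma^2+(1-\rho^2)\sigma^2/N)dt$ give
\[
\dot C_{Q\bar S}=(\tilde g-\kappa)C_{Q\bar S}+(b+g_1)V_Q+\rho\sigma\sigma^0,
\qquad
\dot V_{\bar S}=2\tilde g V_{\bar S}+2(b+g_1)C_{Q\bar S}+\rho^2\sigma^2+\tfrac{1-\rho^2}{N}\sigma^2 .
\]
Both have zero initial data, and variation of constants gives \eqref{cov_Q_bar_S}--\eqref{Var_bar_S}.

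\textbf{Step 4: variances of control and price.} Write
\[
\hat\alpha^i=g_1Q+\tilde g\bar S+(g_2-g_3)D^i+g_4 .
\]
The independence from Step~1 kills all cross terms with $D^i$. Since $\Var(D^i_t)=\sigma^2(1-\rho^2)(1-\tfrac1N)\int_0^te^{2\int_s^t(g_2-g_3)}ds$, this yields \eqref{var_control}. For the price, $\hat P=\bar P-c_1(1-Ng_1)Q+Nc_1\tilde g\bar S+\text{const}$, which gives \eqref{var_price} directly.

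\textbf{Main obstacle.} The only delicate point is the Step~1 decomposition. One must verify that the idiosyncratic deviation $D^i$ is exactly orthogonal to the aggregate pair $(Q,\bar S)$, with the correct $(1-1/N)$ noise factor. Everything else reduces to scalar linear ODEs.
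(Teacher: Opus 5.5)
Your proposal is correct and follows essentially the same route as the paper: the averaged SDE for $\bar S_t$, the deviation process $\hat S^i_t-\bar S_t$ with its $(1-\tfrac1N)$ noise factor, It\^o's formula on $Q_t\bar S_t$ and $\bar S_t^2$, and scalar linear ODEs solved by variation of constants. The one small difference is that you obtain $\Cov(Q_t,D^i_t)=\Cov(\bar S_t,D^i_t)=0$ at once from the independence of the uncorrelated Gaussian driving noises, whereas the paper argues these two separately (noise separation for $Q_t$, a symmetry argument for $\bar S_t$); your version is the cleaner of the two.
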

The proof is deferred to Appendix \ref{appen:comparative}.
Proposition \ref{prop:homo_mean_control} shows that the mean equilibrium control $\E[ \hat \alpha^i_t] $ and mean equilibrium SOC $\E[ \hat S^i_t]$ do not depend on either diffusion coefficient $\sigma$ or the correlation parameter $\rho$ between $W^i$ and $W^0$. 
This is because the homogeneous ODE system \eqref{homo_ode_system} (except for $\tilde u$), and hence the coefficients $g_i$, are free of $\sigma$ or $\rho$. 

By contrast, the nonnegative parameters $\rho$ and $\sigma$ do affect the respective (co)variances. As $\rho$ increases, the stochasticity in SOC dynamics become more aligned with the one in the exogenous supply $Q_t$ through their shared exposure to the common shock $W^0$, this raises the covariance $C_{Q\bar S}(t)$. The variance $V_{\bar S}(t)$ depends on $\rho$ both indirectly through $C_{Q\bar S}(t)$ and directly through a $\rho^2$-term, so its dependence on $\rho$ is generally nonlinear and not necessarily monotonic, unless $b(t) + g_1(t) \geq 0$ then $V_{\bar S}(t)$ is increasing in $\rho$. Increasing $\rho$ strengthens the common-noise contribution in $\Var(\hat \alpha_t^i)$ through $V_{\bar S}(t)$, but simultaneously reduces the idiosyncratic term proportional to $1-\rho^2$. Since the sign of the coefficient in from of $C_{Q\bar S}(t)$ is not determined,  the overall effect is ambiguous. For the price variance, the dependence on $\rho$ comes entirely through $C_{Q\bar S}(t)$ and $V_{\bar S}(t)$, but the overall effect is again ambiguous. 
Figure~\ref{fig:c1_rho} illustrates the effect of $\rho$ on the electricity price $\hat P_t^i$ and the control $\hat \alpha^i_t$ under the current parameter choice; see Table~\ref{tab:parameters}. Increasing $\rho$ shifts randomness from the idiosyncratic component toward the common component, and in the present parameter regime, this reduces $\Std(\hat \alpha^i_t)$ but increases $\Std(\hat P_t^i)$. 

\subsection{Supply and Competition}\label{sec:hybrid-arb}

The number of agents $N$ is a key variable in our model. Within the context of hybrid-BESS operators, $N$ carries a dual impact. First, larger $N$ creates more competition, amplifying the ``cannibalization'' of available price spreads that generate energy arbitrage profits. Second, larger $N$ increases the total supply through the regional generation $\mu^i_t$. To disentangle these two effects, in this section we set up distinct experiments that quantify the impact of additional supply and the impact of additional competition. First, we fix the number of agents $N$, and vary the fraction of them that have  self generation ($\mu^i_t >0)$. Second, we fix the total supply and vary the number of ``arbitrageurs'', i.e., agents that engage in pure energy arbitrage and have $\mu^i_t \equiv 0$.

\medskip
\noindent\textbf{Supply Effect.}
For our first experiment, we fix $N$ and vary the total regional supply. To do so, we let $n$ agents have self-generation $\mu^i_t$ and $N-n$ agents be pure arbitrageurs as shown in the top diagram of Figure \ref{fig:supply_demo}. We then study the controls and price behavior as a function of $n$. Note that in this setting we have 2 classes of  agents---Arbitrageurs and Hybrid---that are homogeneous within the class. While we use the general-case solution to compute $(\hat \alpha^i_t, \hat S^i_t)$, it is also possible to derive a system of equations for the case where  $N$ agents fall into $k$ homogeneous subgroups. In the latter case, there are  $\big(3k+5+\frac{(k+2)(k+1)}{2}\big)k$ to solve, which leads to $34$ variables to solve for with 2 classes.

\begin{figure}[!htp]
\centering
\begin{tikzpicture}[
  x=0.55cm,
  y=0.75cm,
  every node/.style={font=\footnotesize},
  box/.style={draw, rounded corners=0.8pt, line width=0.4pt}
]

\node[font=\small] at (-3,0.36){a) More Supply};

\def\H{1}        
\def\U{0.25}        
\def\G{0.05}        
\def\Ylab{1.25}     
\pgfmathsetmacro{\PanelW}{12*\U+\G}
\pgfmathsetmacro{\W}{12*\U}  

\filldraw[fill=hybridcolor!25, draw=hybridcolor, box]
  (0,0) rectangle ({(4/16)*16*\U},\H);
\filldraw[fill=arbicolor!20, draw=arbicolor, box]
  ({(4/16)*16*\U},0) rectangle ({16*\U},\H);

\node at ({((4/16)*16*\U)/2},0.28) {HB};
\node at ({(4/16)*16*\U + ((12/16)*16*\U)/2},0.28) {Arb};

\node at ({((4/16)*16*\U)/2},\Ylab) {4};
\node at ({(4/16)*16*\U + ((12/16)*16*\U)/2},\Ylab) {12};

\node at ({\W + 1.65},0.36) {$\longrightarrow$};

\begin{scope}[xshift={\PanelW cm}]
\filldraw[fill=hybridcolor!25, draw=hybridcolor, box]
  (0,0) rectangle ({(6/14)*16*\U},\H);
\filldraw[fill=arbicolor!20, draw=arbicolor, box]
  ({(6/14)*16*\U},0) rectangle ({16*\U},\H);

\node at ({((6/14)*16*\U)/2},0.28) {HB};
\node at ({(6/14)*16*\U + ((8/14)*16*\U)/2},0.28) {Arb};

\node at ({((6/14)*16*\U)/2},\Ylab) {6};
\node at ({(6/14)*16*\U + ((8/14)*16*\U)/2},\Ylab) {10};

\end{scope}
\node at ({3*\W + 1.5},{0.36}) {$\cdots$};

\begin{scope}[xshift={2*\PanelW cm}]
\filldraw[fill=hybridcolor!25, draw=hybridcolor, box]
  (0,0) rectangle ({(14/16)*16*\U},\H);
\filldraw[fill=arbicolor!20, draw=arbicolor, box]
  ({(14/16)*16*\U},0) rectangle ({16*\U},\H);

\node at ({((14/16)*16*\U)/2},0.28) {HB};
\node at ({(14/16)*16*\U + ((2/16)*16*\U)/2},0.28) {Arb};

\node at ({((14/16)*16*\U)/2},\Ylab) {14};
\node at ({(14/16)*16*\U + ((2/16)*16*\U)/2},\Ylab) {2};
\end{scope}
\node at ({5*\W + 0.8},{0.36}) {$\longrightarrow$};

\begin{scope}[xshift={3*\PanelW cm}]
\filldraw[fill=hybridcolor!25, draw=hybridcolor, box]
  (0,0) rectangle ({16*\U},\H);

\node at ({8*\U},0.28) {HB};
\node at ({8*\U},\Ylab) {16};
\end{scope}

\end{tikzpicture}
\vspace*{10pt}

\begin{tikzpicture}[
  x=0.55cm,
  y=0.75cm,
  every node/.style={font=\footnotesize},
  box/.style={draw, rounded corners=0.8pt, line width=0.4pt}
]

\node[font=\small] at (-3.5,0.36){b) More Competition};

\def\H{1}
\def\U{0.25}
\def\Ylab{1.25}

\filldraw[fill=hybridcolor!25, draw=hybridcolor, box]
(0,0) rectangle ({4*\U},\H);

\node at ({2*\U},0.28) {HB};
\node at ({2*\U},\Ylab) {4};

\node at (2.1,0.36) {$\longrightarrow$};

\begin{scope}[shift={(3.6,0)}]

\filldraw[fill=hybridcolor!25, draw=hybridcolor, box]
(0,0) rectangle ({4*\U},\H);

\filldraw[fill=arbicolor!20, draw=arbicolor, box]
({4*\U},0) rectangle ({6*\U},\H);

\node at ({2*\U},0.28) {HB};
\node at ({5*\U},0.28) {Arb};

\node at ({2*\U},\Ylab) {4};
\node at ({5*\U},\Ylab) {2};

\end{scope}

\node at (6.4,0.36) {$\cdots$};

\begin{scope}[shift={(7.6,0)}]

\filldraw[fill=hybridcolor!25, draw=hybridcolor, box]
(0,0) rectangle ({4*\U},\H);

\filldraw[fill=arbicolor!20, draw=arbicolor, box]
({4*\U},0) rectangle ({18*\U},\H);

\node at ({2*\U},0.28) {HB};
\node at ({11*\U},0.28) {Arb};

\node at ({2*\U},\Ylab) {4};
\node at ({11*\U},\Ylab) {14};

\end{scope}

\node at (13,0.36) {$\longrightarrow$};

\begin{scope}[shift={(14.4,0)}]

\filldraw[fill=hybridcolor!25, draw=hybridcolor, box]
(0,0) rectangle ({4*\U},\H);

\filldraw[fill=arbicolor!20, draw=arbicolor, box]
({4*\U},0) rectangle ({20*\U},\H);

\node at ({2*\U},0.28) {HB};
\node at ({12*\U},0.28) {Arb};

\node at ({2*\U},\Ylab) {4};
\node at ({12*\U},\Ylab) {16};

\end{scope}

\end{tikzpicture}

\caption{Market composition experiments. Top row:  supply effect conveyed through increasing the number of hybrid BESS (HB) participants, while keeping total number $N=16$ of BESS fixed. Bottom row: competition effect accessed through fixing the number of hybrid BESS HB = 4 and varying the number $n$ of Arbitrageur (Arb) operators.}
\label{fig:supply_demo}
\end{figure}

The left panel of Figure \ref{fig:supply-effect} shows the average controls $\E[\hat{\alpha}^{(\mathrm{Type})}_t]$ of the Hybrid and Arbitrageur operators as a function of $t$, as $n$ varies. Because the Hybrid operators have their own regional generation, the amplitude  $|\E[\hat{\alpha}^{\mathrm{Hyb}}_t]|$ is lower than the discharge  $|\E[\hat{\alpha}^{\mathrm{Arb}}_t]|$ of the Arbitrageurs. Moreover, the peak charge/discharge hour is later for the Arbitrageurs. The right panel of Figure \ref{fig:supply-effect} shows that as supply rises (i.e., $n\uparrow$), prices fall (almost) linearly, and the timing of the afternoon price trough shifts a bit later.
\begin{figure}[!htbp]
        \centering
\begin{tabular}{cc}
    \includegraphics[width=0.49\linewidth]{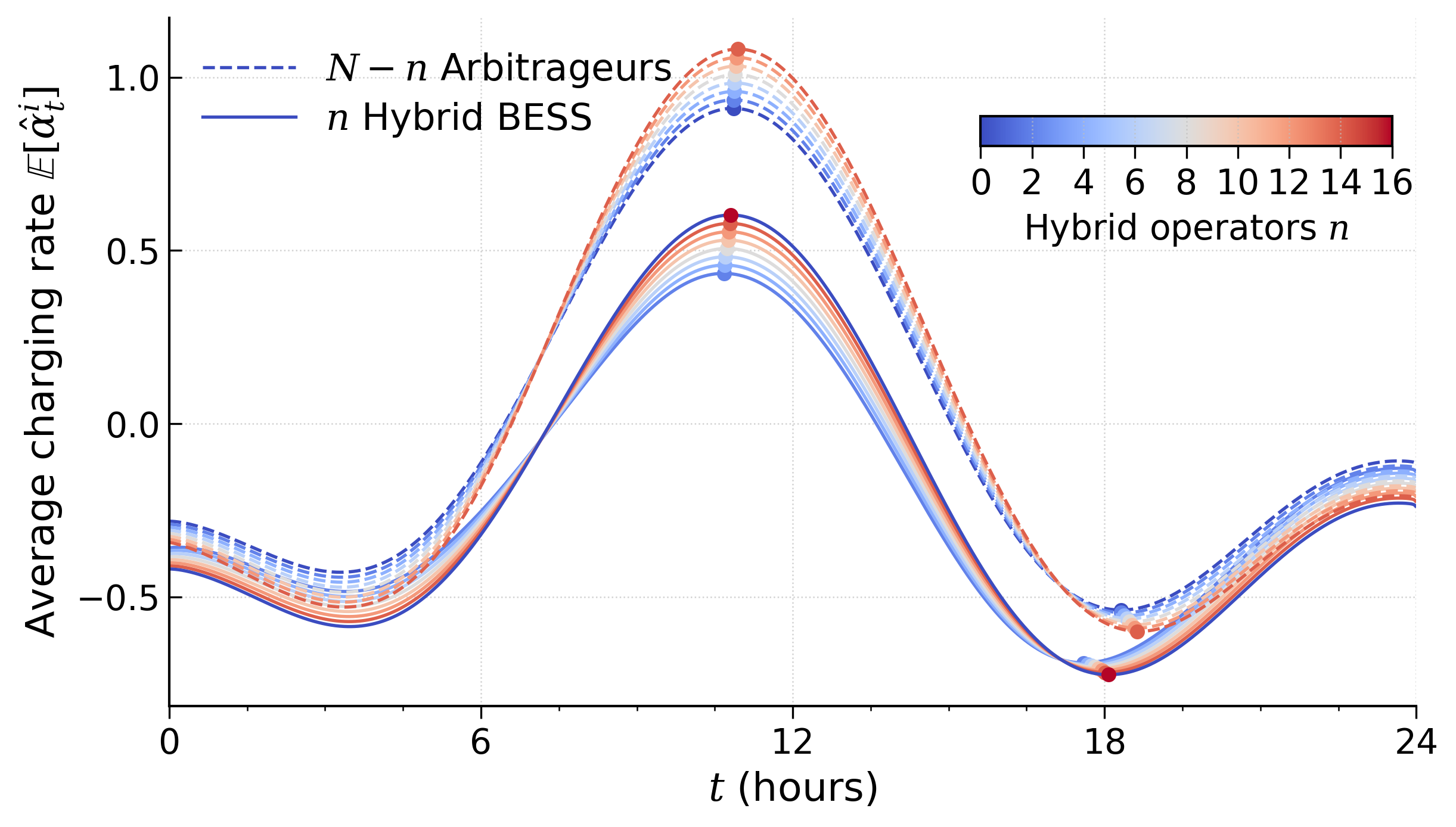}  &
        \includegraphics[width=0.47\linewidth]{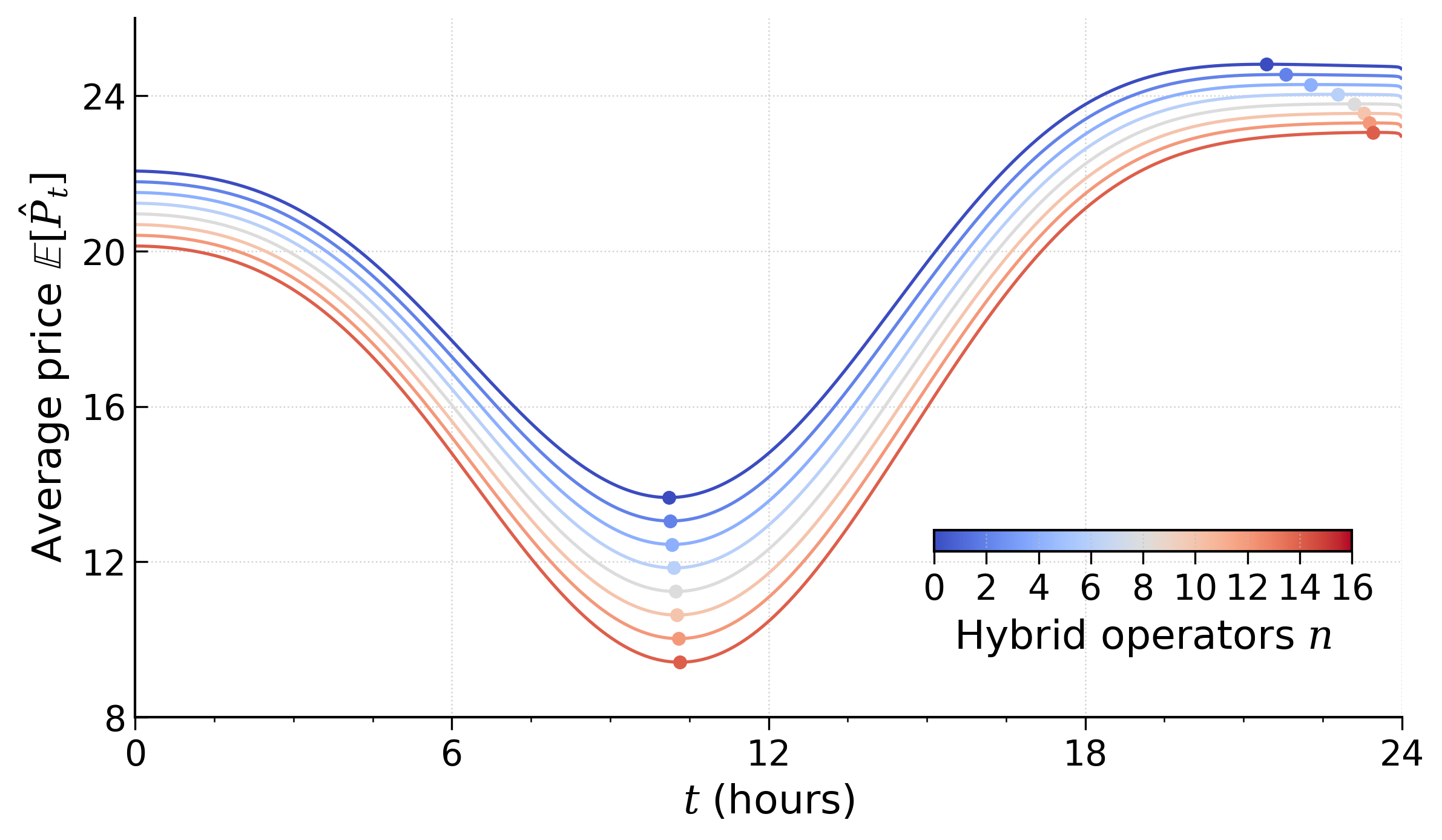}
    \end{tabular}
    \caption{Effect of added hybrid supply. \emph{Left panel}: average control of operators  $\E\big[\hat{\alpha}^{(\mathrm{Type})}_t\big], \mathrm{Type} \in \{\mathrm{Arb}, \mathrm{Hyb}\}$, with fixed $N=16$ and Hybrid assets color coded according to $n=0,2,\ldots,16$. \emph{Right panel:} average equilibrium price $\E[\hat P_t]$ as a function of $t$, varying $n$. The dots indicate the time of the daily peak/trough of each curve.}\label{fig:supply-effect}
\end{figure}

\noindent\textbf{Competition Effect.} We consider $n=4$ Hybrid operators and 
$N \ge 4$ total agents, i.e.~$N-n$ Arbitrageurs, varying $N$. Figure \ref{fig:competition-effect} shows three different competitive effects. First, more agents flatten price fluctuations, driving down the peaks and raising the valleys. In particular, with sufficiently many agents, the afternoon peak is entirely erased. Moreover, the peak and the trough of the typical price curve get further apart.  This confirms the qualitative conclusions in \cite{butters2025soaking} who point out that ``the marginal effects diminish as the battery fleet increases in size''.  Second, as more and more arbitrageurs appear in the market, the Hybrid BESS operators absorb their regional production more and more. Indeed, the gap between the SOC of Hybrid and Arbitrageur operators increases in $N$, so Hybrid operators become more focused on their self-generation and less similar to Arbitrageurs.  Third, more fierce competition creates an overall negative effect on revenue: prices not just become flatter but also their average level drops. In this case study, average $\frac{1}{T} \int_0^T \hat P_t \,dt$ declines by about 1.2\% as we move from 4 to 24 agents. In Section \ref{sec:asymptotics} we formalize this effect by studying the asymptotics (in the fully homogeneous case) as $N \to \infty$.

\begin{figure}[!htbp]
        \centering
   \includegraphics[width=0.49\linewidth]{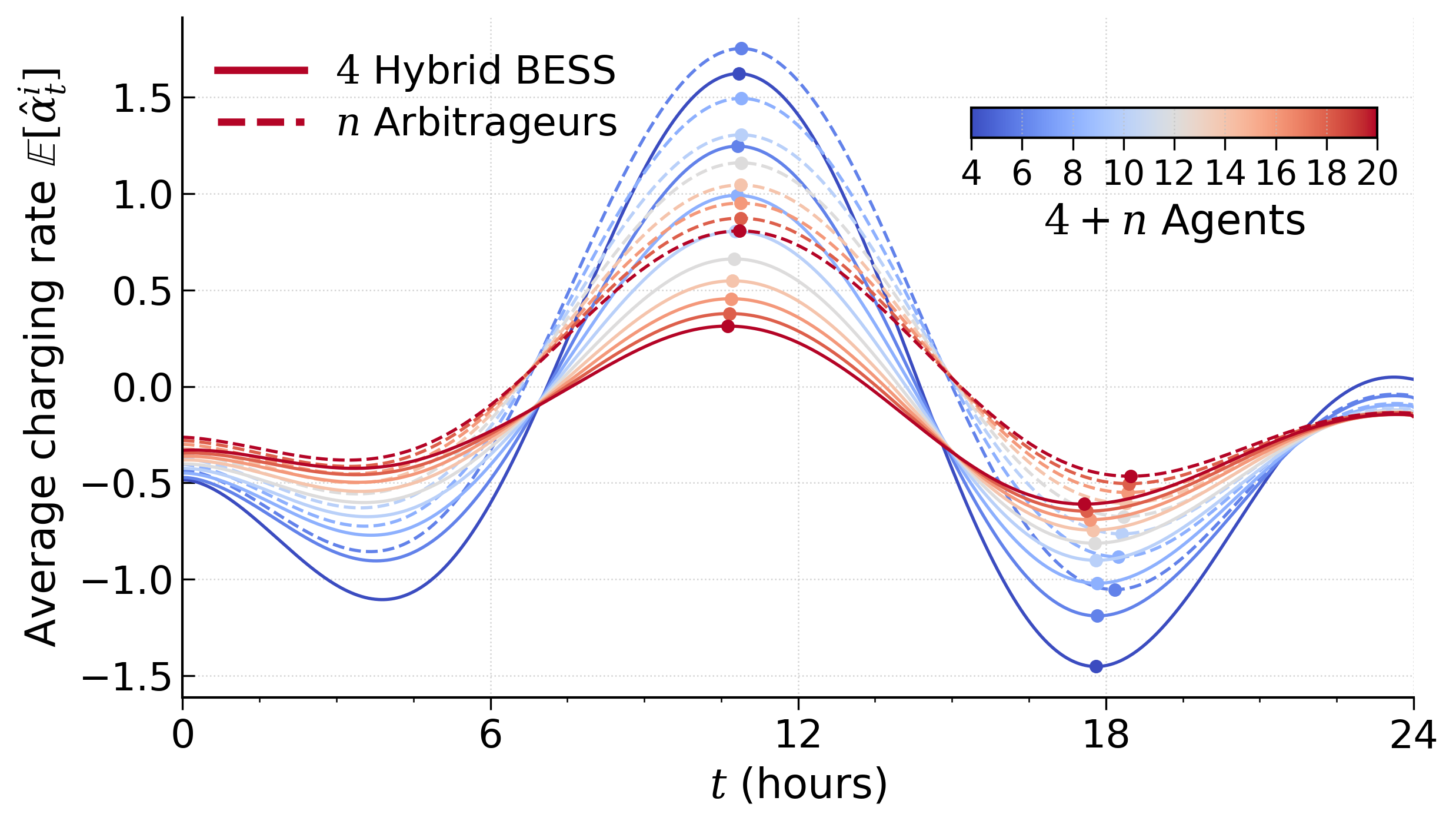}
   \includegraphics[width=0.49\linewidth]{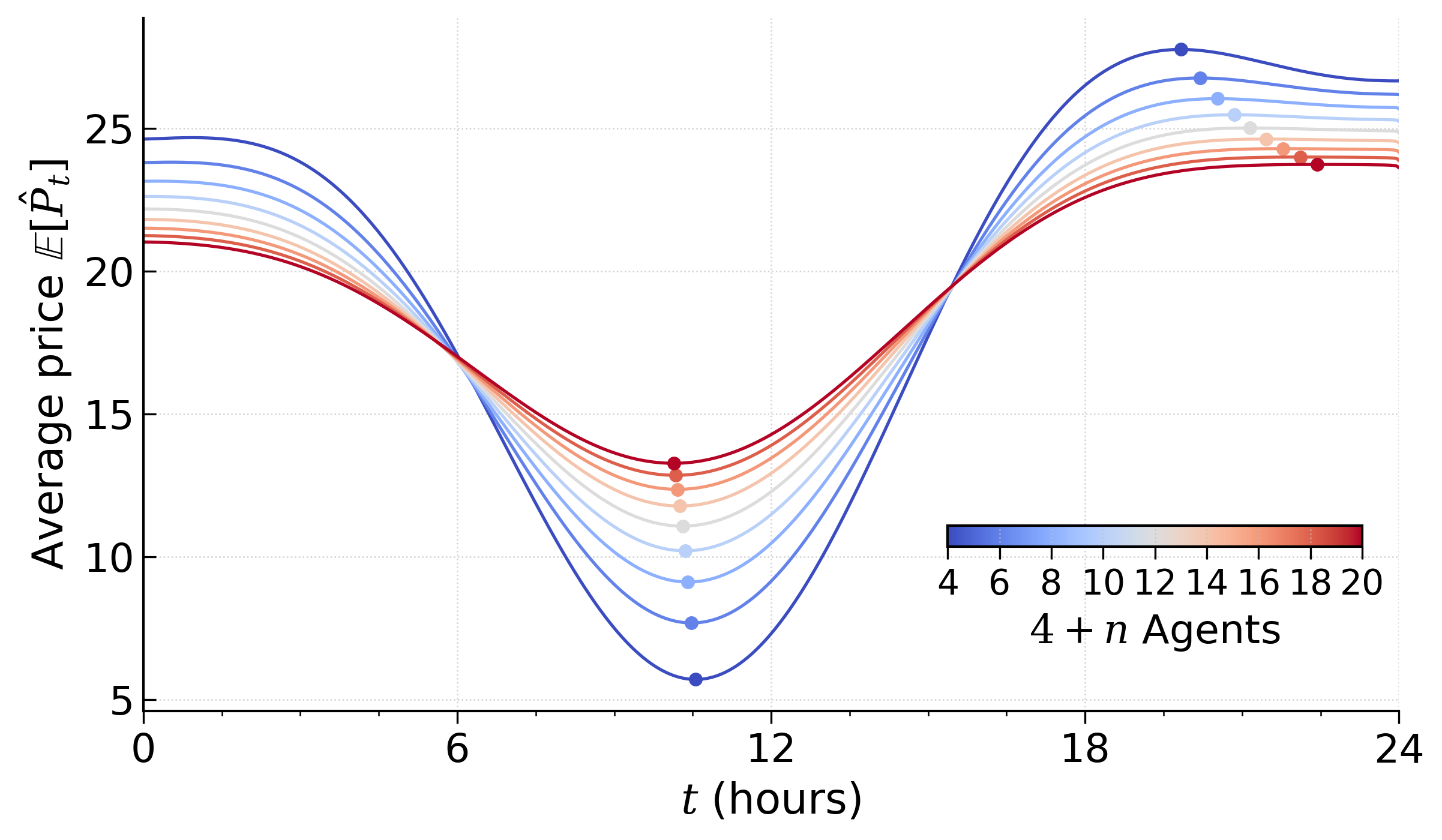}
              \caption{ \emph{Left panel:} average control of operators  $\E\big[\hat{\alpha}^{(\mathrm{Type})}_t\big], \mathrm{Type} \in \{\mathrm{Arb}, \mathrm{Hyb}\}$ as a function of $t$. \emph{Right panel:} daily average equilibrium price $\E[ \hat P_t] $ as the number of BESS varies. The dots indicate the time of the daily peak/trough of each curve.}
        \label{fig:competition-effect}
\end{figure}

\subsection{Behavior of Heterogeneous Markets}\label{sec:hetero}

To have a broader qualitative insight into how competition impacts markets, we consider a large-scale simulation experiment. To this end, we simulate 500 different markets with $N=10$ heterogeneous BESS, according to the following rules.
To simulate a collection of $N$ distinct agents, we start with the base setup of Section \ref{sec:illustration} and vary their self-generation capacity $G^i$, taking $\sigma^i = \sqrt{G^i}$ and $a^i(t) = G^i \cdot a(t), b^i(t) = G^i \cdot b(t)$ in \eqref{eq:sample-mu} with $\log G^i \sim \Unif(-1, 1)$ i.i.d. We also vary the correlations with the common noise driver, $\rho^i \sim \Unif(0, 0.9)$. Secondly, we vary the SOC capacities by sampling $c^i_3 \sim \Unif(0.1, 1)$ which controls the penalty on SOC deviations; smaller $c^i_3$ is interpreted as larger BESS energy capacity. In sum, the simulations represent a collection of heterogeneous hybrid BESS that differ in their hybrid build-outs, their correlation with the net load and in their battery capacities. 

Figure \ref{fig:hetero-hist-price} summarizes how the $N=10$ BESS operators collectively flatten the prices. We visualize the percent reduction of daily price TB with the BESS present compared to the case where there are no BESS at all:
\begin{align*}
    \frac{ TB(\bar{P} - c_1 (Q_t - \sum_i \hat\alpha^i_t) )}{TB( \bar{P} - c_1 Q_t  )}.
\end{align*}
Figure \ref{fig:hetero-hist-revenue} shows the distribution of BESS daily revenue, normalized by its generation capacity $\frac{1}{G_i} \bigl(\bar{P}^i - c_1^i (Q_t - \sum_j \hat\alpha^j_t) \hat\alpha^i_t \bigr)$. We observe that there is a wide range of revenues, primarily driven by the varying BESS capacities.

\smallskip
\noindent\textbf{Varying net supply curves and price impact weights.} For a different randomization and heterogeneity aspect, we maintain identical BESS characteristics but instead sample a collection of different $\theta(\cdot)$ curves, which represent the net load across days of the year. Specifically, we replace  \eqref{eq:sample-theta} with
\begin{align*}
\theta(t) & = \bar{\Theta} - A_1 \sin \bigl( \frac{\pi }{24}(2t +F_1) \bigr) - A_2 \sin \bigl(\frac{\pi}{24}(4t + F_2) \bigr), \\
\bar{\Theta} & \sim \Unif(25, 35), \, A_1  \sim \Unif(18, 26), \, A_2 \sim \Unif(6,10), \, F_1 \sim \Unif(12,18), \, F_2 \sim \Unif(-9, -5).
\end{align*}
This randomizes the magnitude and timing of the evening price peak and the presence of the morning mini-peak. 
We furthermore randomize the weights $w_{ij}$ in \eqref{eq:price-impact}, which represents the different impact of neighboring BESS depending on grid congestion regimes. For the latter, we set $w_{ii} = 1$ and  sample independently the off-diagonal elements $w_{ji}=w_{ij}\sim\Unif (0.5,1)\, \forall i >j$. Figure \ref{fig:theta-shape} demonstrates that daily TB is reduced by 30-40\% (median of 36.2\%) with  10 BESS and by nearly half (35-48\%, median of 40.8\%) with 15 BESS, giving a sense of incremental price flattening as $N$ grows.

\begin{figure}[!htbp]
     \centering
     \begin{subfigure}{0.33\linewidth}
         \includegraphics[width=\linewidth]{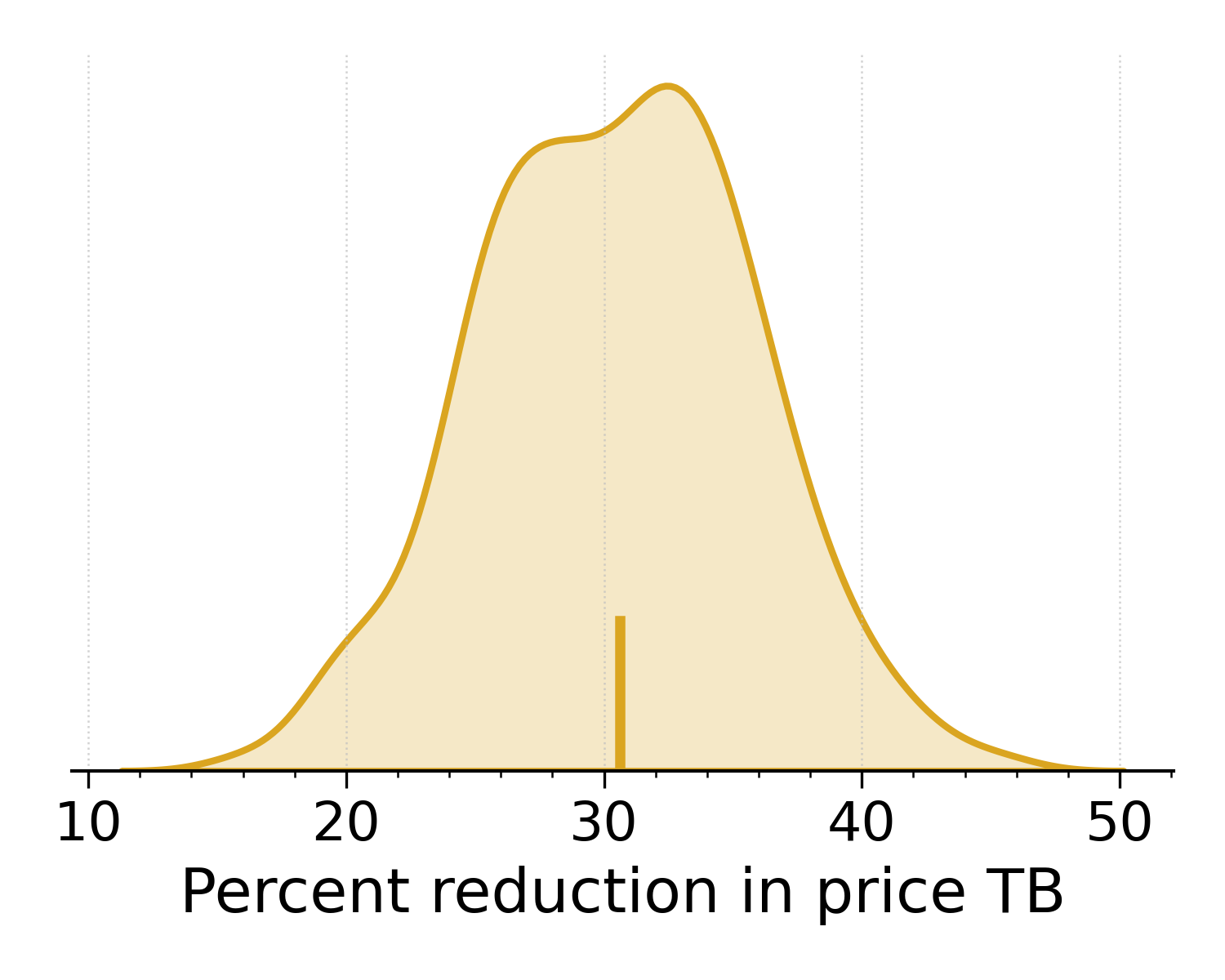}
         \caption{Percent reduction in daily TB price}
        \label{fig:hetero-hist-price}
     \end{subfigure}
     \begin{subfigure}{0.325\linewidth}
         \includegraphics[width=\linewidth]{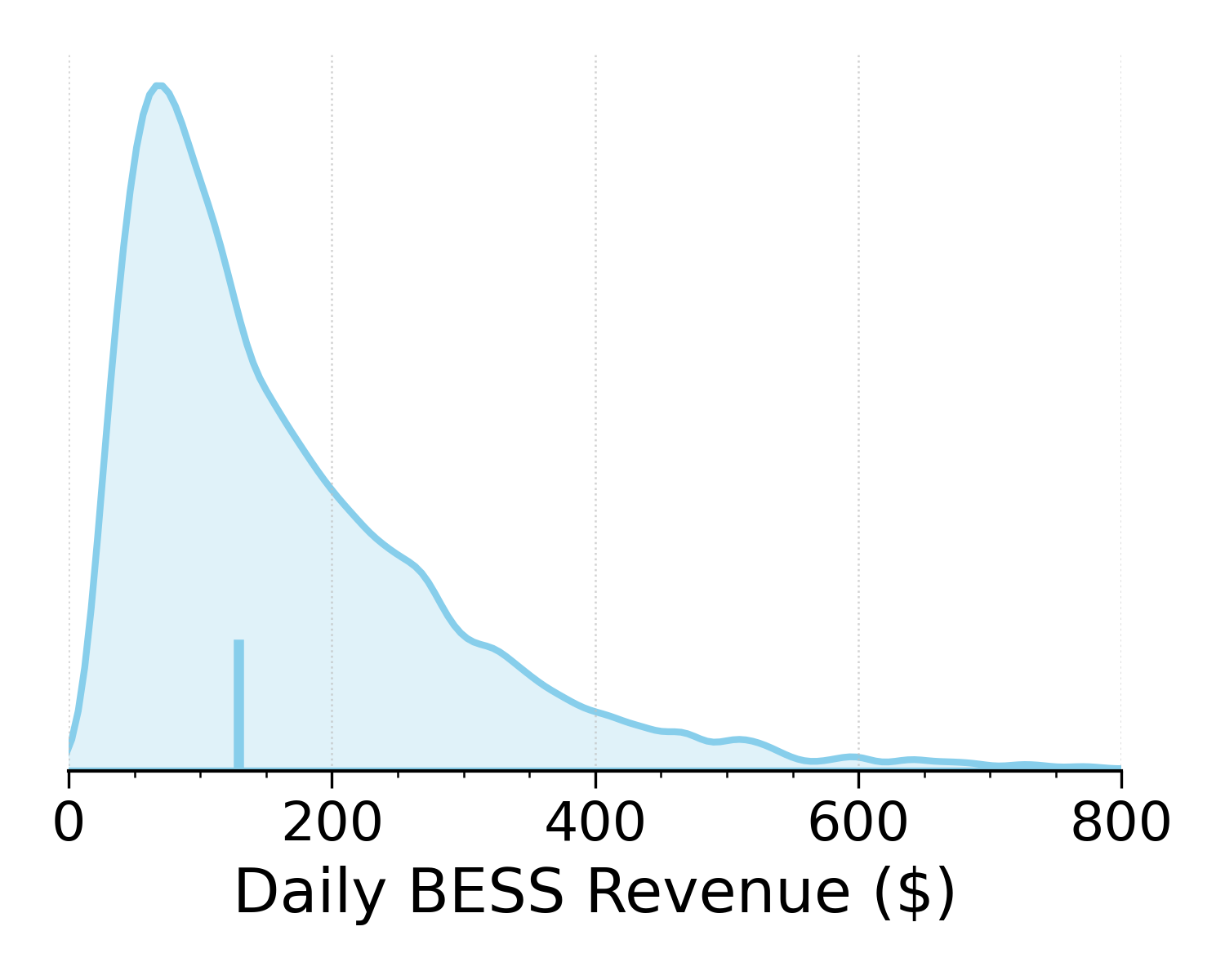}
         \caption{Daily BESS revenue}
         \label{fig:hetero-hist-revenue}
     \end{subfigure}
     \begin{subfigure}{0.325\linewidth}
         \includegraphics[width=\linewidth]{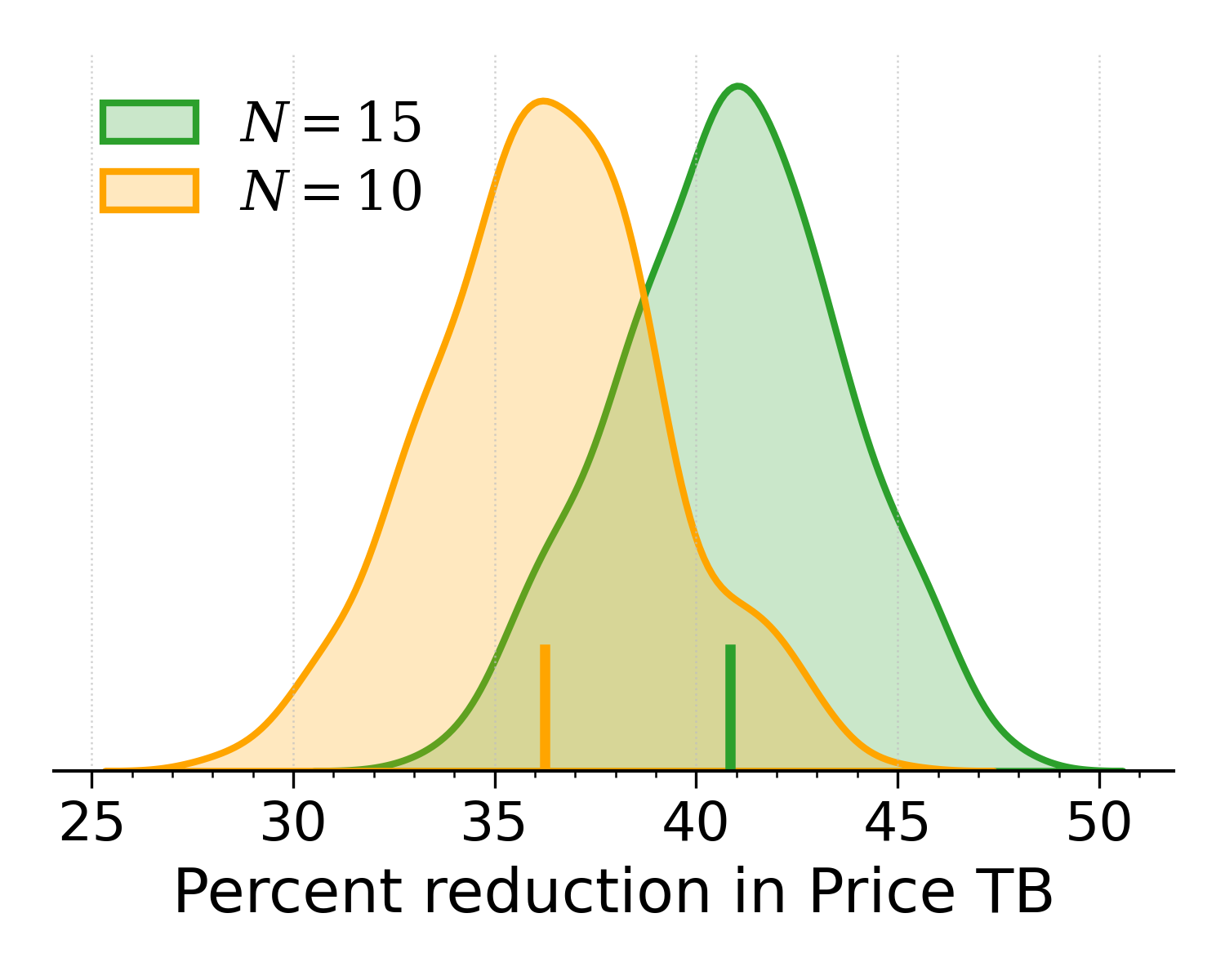}
         \caption{Percent reduction in daily TB price}
         \label{fig:theta-shape}
     \end{subfigure}
     \caption{Distributions across 500 heterogeneous markets with $N=10$ BESS operators. Vertical lines denote the corresponding medians.}
\end{figure}

\section{Impact of BESS Sizing}\label{sec:major-minor}

In real-life power markets, the relative size of hybrid assets is highly heterogeneous, and includes both relatively small and relatively large assets. The issue of sizing is pertinent to the question of market power, whereby some large operators might be able to influence prices through their charging strategies, in effect transitioning from price takers to price makers. A related issue concerns potential mergers and coalition formation (e.g.~a coordinated dispatch of multiple BESS) and their impact on the market dynamics. Finally, analysis of BESS size sheds light on the gap between monopoly and competitive equilibria. 

To analyze these market features, we use the  homogeneous setup introduced in \eqref{eq:homo-condition}, further restricting to the case of all $N$ BESS being  arbitrageurs, and $\rho_i =0 \, \forall i$. We then consider $\cR_m$ ``Minor'' operators each of whom control $m \ge 1$ assets and $\cR_M$  ``Major'' operators each of whom control $M \ge m$ assets. Since in total there are $N$ BESS units, the corresponding $(m,M,N)$ market is defined by
\begin{align}\label{merge_eq}
    m \times \calR_m + M \times \calR_M = N.
\end{align}
From the control perspective, the total number of operators is $\calR_m+\calR_M$, and thus $\balpha \in \bbR^{\calR_m+\calR_M}$. Below we concentrate on a single Major operator $\calR_M=1$. There market then consists of two types of agents, 1 Major and $\mathcal{R}_m$ Minors, where the control of $i^{th}$ operator of each type
is encoded as $\alpha^{i,m, (m,M,N)}$ and $\alpha^{i,M, (m,M,N)}$, in the $(m, M, N)$ market.

Managing multiple units corresponds to summing up the respective SOCs, revenue and cost of several otherwise identical assets. Because the assets are homogeneous and permutation-invariant (Section \ref{homo_sys}), the control $\alpha^{i,(\mathrm{Size})}_t$ for a block of  
$\mathrm{Size}$ units should match the sum of the individual controls, for $\mathrm{Size} = \{M, m\}$.
 Summing the constituent SOC's, the block SOCs denoted as $S^{i,(\mathrm{Size})}$ has the dynamics
\begin{align}
    S^{j,(\mathrm{Size})}_t &= S^{j,(\mathrm{Size})}_0 + \int_{0}^t\alpha^{j,(\mathrm{Size})}_s\,ds + \int_{0}^t \sqrt{\mathrm{Size}}\;\sigma\, d W^{j,(\mathrm{Size})}_s , \quad S^{j,(\mathrm{Size})}_0 = \mathrm{Size}\cdot S_0. \label{large_agent_dynamics}
\end{align}
The diffusion coefficient scales as $\sqrt{\mathrm{Size}}$ by independence of the underlying region-specific Brownian motions. Specializing to Major and Minor operators, denoted by the superscripts $S^{i,(M)}, S^{i,(m)}$, and generically as $S^{i,(\mathrm{Size})}$, $\mathrm{Size} \in \{m,M\}$, the objective is thus 
\begin{align}
\label{eq:J-major}
J^{\ell,(\mathrm{Size})}(\balpha) & = \mathbb{E} \Big[\underbrace{ \int_0^T 
\Big( \bar{P} - c_1 \Bigl(Q_t -  
\sum_{i=1}^{\calR_m} \alpha^{i,(m)}_t-\sum_{i=1}^{\calR_M}\alpha^{i,(M)}_t\Bigr)\Big) \alpha^{\ell,(\mathrm{Size})}_t }_{-Revenue} \Big] \\ \nonumber
& \qquad +  \mathbb{E} \Big[  \underbrace{\frac{c_2}{\mathrm{Size}}(\alpha^{\ell,(\mathrm{Size})}_t)^2 
   + \frac{c_3}{\mathrm{Size}}\big(S^{\ell,(\mathrm{Size})}_t - \mathrm{Size} \cdot \zeta (t)\big)^2 
+ \frac{c_4}{\mathrm{Size}}\big(S^{\ell,(\mathrm{Size})}_T - \mathrm{Size} \cdot \zeta (T)\big)^2 }_{Cost}\Big].
\end{align}

In \eqref{eq:J-major}, the cost coefficients are scaled appropriately to preserve incentives. The aggregate revenue is linear in $\alpha$, which is already consistent with summing $\alpha^{i,(\mathrm{Size})} = \sum_{\ell=1}^{\cR_{\mathrm{Size}}} \alpha^\ell$. To ensure that the scale of dispatch costs in \eqref{eq:J-major} matches the sum of individual asset costs,   the effective cost coefficients are divided by $\mathrm{Size}$. Similarly, the SOC target is scaled linearly as $\mathrm{Size}\cdot\zeta(t)$. 

\subsection{Cost of Competition and Major/Minor Market}\label{sec:major}

\begin{figure}[!htp]
\centering
\begin{tikzpicture}[
  x=0.52cm,
  y=0.78cm,
  every node/.style={font=\footnotesize},
  box/.style={draw, rounded corners=0.8pt, line width=0.4pt}
]


\def\H{0.9}
\def\W{5.2}
\def\B{0.62}
\def\G{1.1}
\def\Y{0.45}

\node[font=\small] at (-4, \Y) {a) Major operator size};

\begin{scope}[shift={(0,0)}]
\filldraw[fill=arbicolor!25, draw=arbicolor, box] (0,0) rectangle (\W,\H);

\filldraw[fill=arbicolor!25, draw=arbicolor, box] (0,0) rectangle (\B,\H);
\filldraw[fill=arbicolor!25, draw=arbicolor, box] (\B,0) rectangle (2*\B,\H);
\filldraw[fill=arbicolor!25, draw=arbicolor, box] (2*\B,0) rectangle (3*\B,\H);

\filldraw[fill=arbicolor!25, draw=arbicolor, box] (\W-3*\B,0) rectangle (\W-2*\B,\H);
\filldraw[fill=arbicolor!25, draw=arbicolor, box] (\W-2*\B,0) rectangle (\W-\B,\H);
\filldraw[fill=arbicolor!25, draw=arbicolor, box] (\W-\B,0) rectangle (\W,\H);

\filldraw[fill=hybridcolor!25, draw=hybridcolor, box]
  (0,0) rectangle (\B,\H);

\node at (0.5*\B,\Y) {$M$};
\node at (0.5*\B,\Y*2.5) {1};
\node at (1.5*\B,\Y) {$m$};
\node at (1.5*\B,\Y*2.5) {1};
\node at (2.5*\B,\Y) {$m$};
\node at (2.5*\B,\Y*2.5) {1};
\node at (0.5*\W,\Y) {$\cdots$};
\node at (\W-2.5*\B,\Y) {$m$};
\node at (\W-2.5*\B,\Y*2.5) {1};
\node at (\W-1.5*\B,\Y) {$m$};
\node at (\W-1.5*\B,\Y*2.5) {1};
\node at (\W-0.5*\B,\Y) {$m$};
\node at (\W-0.5*\B,\Y*2.5) {1};
\end{scope}

\node at ({\W+0.55},0.45) {$\longrightarrow$};

\begin{scope}[shift={({\W+\G},0)}]
\filldraw[fill=arbicolor!25, draw=arbicolor, box] (0,0) rectangle (\W,\H);

\filldraw[fill=arbicolor!25, draw=arbicolor, box] (0,0) rectangle (\B,\H);
\filldraw[fill=arbicolor!25, draw=arbicolor, box] (\B,0) rectangle (2*\B,\H);
\filldraw[fill=arbicolor!25, draw=arbicolor, box] (2*\B,0) rectangle (3*\B,\H);

\filldraw[fill=arbicolor!25, draw=arbicolor, box] (\W-3*\B,0) rectangle (\W-2*\B,\H);
\filldraw[fill=arbicolor!25, draw=arbicolor, box] (\W-2*\B,0) rectangle (\W-\B,\H);
\filldraw[fill=arbicolor!25, draw=arbicolor, box] (\W-\B,0) rectangle (\W,\H);

\filldraw[fill=hybridcolor!25, draw=hybridcolor, box]
  (0,0) rectangle (2*\B,\H);

\node at (\B,\Y) {$M$};
\node at (\B,2.5*\Y) {2};
\node at (2.5*\B,\Y) {$m$};
\node at (2.5*\B,2.5*\Y) {1};
\node at (0.5*\W,\Y) {$\cdots$};
\node at (\W-2.5*\B,\Y) {$m$};
\node at (\W-2.5*\B,2.5*\Y) {1};
\node at (\W-1.5*\B,\Y) {$m$};
\node at (\W-1.5*\B,2.5*\Y) {1};
\node at (\W-0.5*\B,\Y) {$m$};
\node at (\W-0.5*\B,2.5*\Y) {1};
\end{scope}

\node at ({2*\W+\G+0.55},0.45) {$\cdots$};

\begin{scope}[shift={({2*(\W+\G)},0)}]
\filldraw[fill=arbicolor!25, draw=arbicolor, box] (0,0) rectangle (\W,\H);

\filldraw[fill=arbicolor!25, draw=arbicolor, box] (0,0) rectangle (\B,\H);
\filldraw[fill=arbicolor!25, draw=arbicolor, box] (\B,0) rectangle (2*\B,\H);
\filldraw[fill=arbicolor!25, draw=arbicolor, box] (2*\B,0) rectangle (3*\B,\H);

\filldraw[fill=arbicolor!25, draw=arbicolor, box] (\W-3*\B,0) rectangle (\W-2*\B,\H);
\filldraw[fill=arbicolor!25, draw=arbicolor, box] (\W-2*\B,0) rectangle (\W-\B,\H);
\filldraw[fill=arbicolor!25, draw=arbicolor, box] (\W-\B,0) rectangle (\W,\H);

\filldraw[fill=hybridcolor!25, draw=hybridcolor, box]
  (0,0) rectangle (\W-\B,\H);

\node at ({0.5*(\W-\B)},\Y) {$M$};
\node at ({0.5*(\W-\B)},2.5*\Y) {31};
\node at ({\W-0.5*\B},\Y) {$m$};
\node at ({\W-0.5*\B},2.5*\Y) {1};

\end{scope}

\node at ({3*\W+2*\G+0.55},0.45) {$\longrightarrow$};

\begin{scope}[shift={({3*(\W+\G)},0)}]
\filldraw[fill=arbicolor!25, draw=arbicolor, box] (0,0) rectangle (\W,\H);

\filldraw[fill=arbicolor!25, draw=arbicolor, box] (0,0) rectangle (\B,\H);
\filldraw[fill=arbicolor!25, draw=arbicolor, box] (\B,0) rectangle (2*\B,\H);
\filldraw[fill=arbicolor!25, draw=arbicolor, box] (2*\B,0) rectangle (3*\B,\H);

\filldraw[fill=arbicolor!25, draw=arbicolor, box] (\W-3*\B,0) rectangle (\W-2*\B,\H);
\filldraw[fill=arbicolor!25, draw=arbicolor, box] (\W-2*\B,0) rectangle (\W-\B,\H);
\filldraw[fill=arbicolor!25, draw=arbicolor, box] (\W-\B,0) rectangle (\W,\H);

\filldraw[fill=hybridcolor!25, draw=hybridcolor, box]
  (0,0) rectangle (\W,\H);

\node at ({0.5*(\W)},\Y) {$M$};
\node at ({0.5*(\W)},2.5*\Y) {32};
\end{scope}

\end{tikzpicture}
\vspace*{6pt}

\begin{tikzpicture}[
  x=0.52cm,
  y=0.78cm,
  every node/.style={font=\footnotesize},
  box/.style={draw, rounded corners=0.8pt, line width=0.4pt}
]

\def\H{0.9}
\def\W{5.2}
\def\B{0.62}
\def\G{1.1}
\def\Y{0.38}
\def\YT{1.12}
\def\MW{0.625*\W} 

\node[font=\small] at (-4,\Y) {b) Minor operator size};

\begin{scope}[shift={(0,0)}]

\filldraw[fill=arbicolor!25, draw=arbicolor, box] (0,0) rectangle (\W,\H);

\filldraw[fill=hybridcolor!25, draw=hybridcolor, box]
(0,0) rectangle (\MW,\H);

\draw[box] (\MW,0) rectangle ({\MW+0.5*\B},\H);
\draw[box] (\MW+0.5*\B,0) rectangle ({\MW+1*\B},\H);
\draw[box] (\W-\B,0) rectangle ({\W-0.5*\B},\H);
\draw[box] ({\W-0.5*\B},0) rectangle (\W,\H);

\node at ({0.5*\MW},\Y) {$M$};
\node at ({0.5*\MW},\YT) {20};

\node at ({\MW+0.25*\B},\YT) {1};

\node at ({\MW+0.75*\B},\YT) {1};

\node at ({0.5*(\MW+\W)},\Y) {$\cdots$};

\node at ({\W-0.75*\B},\YT) {1};
\node at ({\W-0.25*\B},\YT) {1};

\end{scope}

\node at ({\W+0.55},0.45) {$\longrightarrow$};

\begin{scope}[shift={({\W+\G},0)}]

\filldraw[fill=arbicolor!25, draw=arbicolor, box] (0,0) rectangle (\W,\H);

\filldraw[fill=hybridcolor!25, draw=hybridcolor, box]
(0,0) rectangle (\MW,\H);

\draw[box] (\MW,0) rectangle ({\MW+\B},\H);
\draw[box] ({\W-\B},0) rectangle (\W,\H);

\node at ({0.5*\MW},\Y) {$M$};
\node at ({0.5*\MW},\YT) {20};

\node at ({\MW+0.5*\B},\Y) {$m$};
\node at ({\MW+0.5*\B},\YT) {2};

\node at ({0.5*(\MW+\W)},\Y) {$\cdots$};

\node at ({\W-0.5*\B},\Y) {$m$};
\node at ({\W-0.5*\B},\YT) {2};

\end{scope}

\node at ({2*\W+\G+0.55},0.45) {$\cdots$};

\begin{scope}[shift={({2*(\W+\G)},0)}]

\filldraw[fill=arbicolor!25, draw=arbicolor, box] (0,0) rectangle (\W,\H);

\filldraw[fill=hybridcolor!25, draw=hybridcolor, box]
(0,0) rectangle (\MW,\H);

\draw[box] (\MW,0) rectangle ({0.5*(\MW+\W)},\H);
\draw[box] ({0.5*(\MW+\W)},0) rectangle (\W,\H);

\node at ({0.5*\MW},\Y) {$M$};
\node at ({0.5*\MW},\YT) {20};

\node at ({0.5*(\MW+0.5*(\MW+\W))},\Y) {$m$};
\node at ({0.5*(\MW+0.5*(\MW+\W))},\YT) {6};

\node at ({0.5*(0.5*(\MW+\W)+\W)},\Y) {$m$};
\node at ({0.5*(0.5*(\MW+\W)+\W)},\YT) {6};

\end{scope}

\node at ({3*\W+2*\G+0.55},0.45) {$\longrightarrow$};

\begin{scope}[shift={({3*(\W+\G)},0)}]

\filldraw[fill=arbicolor!25, draw=arbicolor, box] (0,0) rectangle (\W,\H);

\filldraw[fill=hybridcolor!25, draw=hybridcolor, box]
(0,0) rectangle (\MW,\H);

\draw[box] (\MW,0) rectangle (\W,\H);

\node at ({0.5*\MW},\Y) {$M$};
\node at ({0.5*\MW},\YT) {20};

\node at ({0.5*(\MW+\W)},\Y) {$m$};
\node at ({0.5*(\MW+\W)},\YT) {12};

\end{scope}
\end{tikzpicture}

\caption{Illustration of Major-Minor market compositions. \emph{Top row:} increasingly dominant Major operator of size $M$ in a market with a fixed total of $N=32$ BESS units. \emph{Bottom row:} single Major operator that controls $M=20$ BESS units and a crowd of identical Minor operators each of whom control $m=1,2,3,4,6,12$ units. The latter case corresponds to a non-symmetric duopoly.}
\label{fig:market-size-32-demo1}
\end{figure}

We first consider the case where there is a single Major operator $\cR_M=1$ of size $M$ and $\cR_n = N-M$ Minor operators of size $m=1$, see the top diagram in Figure \ref{fig:market-size-32-demo1}. The base case $M=1$ corresponds to homogeneous, symmetric competition of equally-sized operators, while $M=N$ corresponds to a monopolist that single-handedly operates all the BESSs. Note that for this case study in order to isolate the competitive effects, we assume that all operators are arbitrageurs and there is zero self-generation $\mu^i_t \equiv 0$.

Figure \ref{fig:major_minor_profit} shows the relative magnitude of BESS charging rates as $M$ varies. The primary effect is that competition increases dispatch as the operators do not fully internalize the full impact of their actions on others. Consequently, as $M$ grows and there are fewer agents competing, the total BESS dispatch drops in magnitude. The monopolist ($M=32$) utilizes the battery nearly half as much as the crowd of $N=32$ individual assets. Hence, highly competitive markets generate extra ``churn'' as each operator aggressively charges and discharges, cannibalizing everyone's profits. Two related effects are documented in the left panel of Figure \ref{fig:major_minor_profit}. First, with less competition each operator increases their dispatch. Specifically, for a Minor operator who always operates an asset of unit size, we see a substantial increase (that is convex in $M$) as the Major agent ``absorbs'' other competitors. When only two operators with size $M=31$ and $m=1$ remain in the market, the Minor operator exerts roughly 3.10 times its baseline level of control reflecting stronger incentives due to reduced competition. Second, while a larger operator naturally accounts for a larger share of aggregate dispatch, that pattern is sub-linear and $S$-shaped. An operator controlling 5 units dispatches at 3.12x relative to the homogeneous baseline (internalizing competition causes this sub-linear growth), an operator of 16 units dispatches at 6.59x and finally a monopolist dispatches 16.16 times more. In proportional terms, a Major operator who controls half of all capacity uses only $6.59/(6.59+1.42 \cdot 16)=22.5\%$ of total dispatch and a Major operator who controls $M/N=31/32 \approx 96.9\%$ of total capacity only accounts for 83.9\% of aggregate dispatch. 

\begin{figure}[!htbp]
    \centering
       \includegraphics[width=0.325\textwidth]{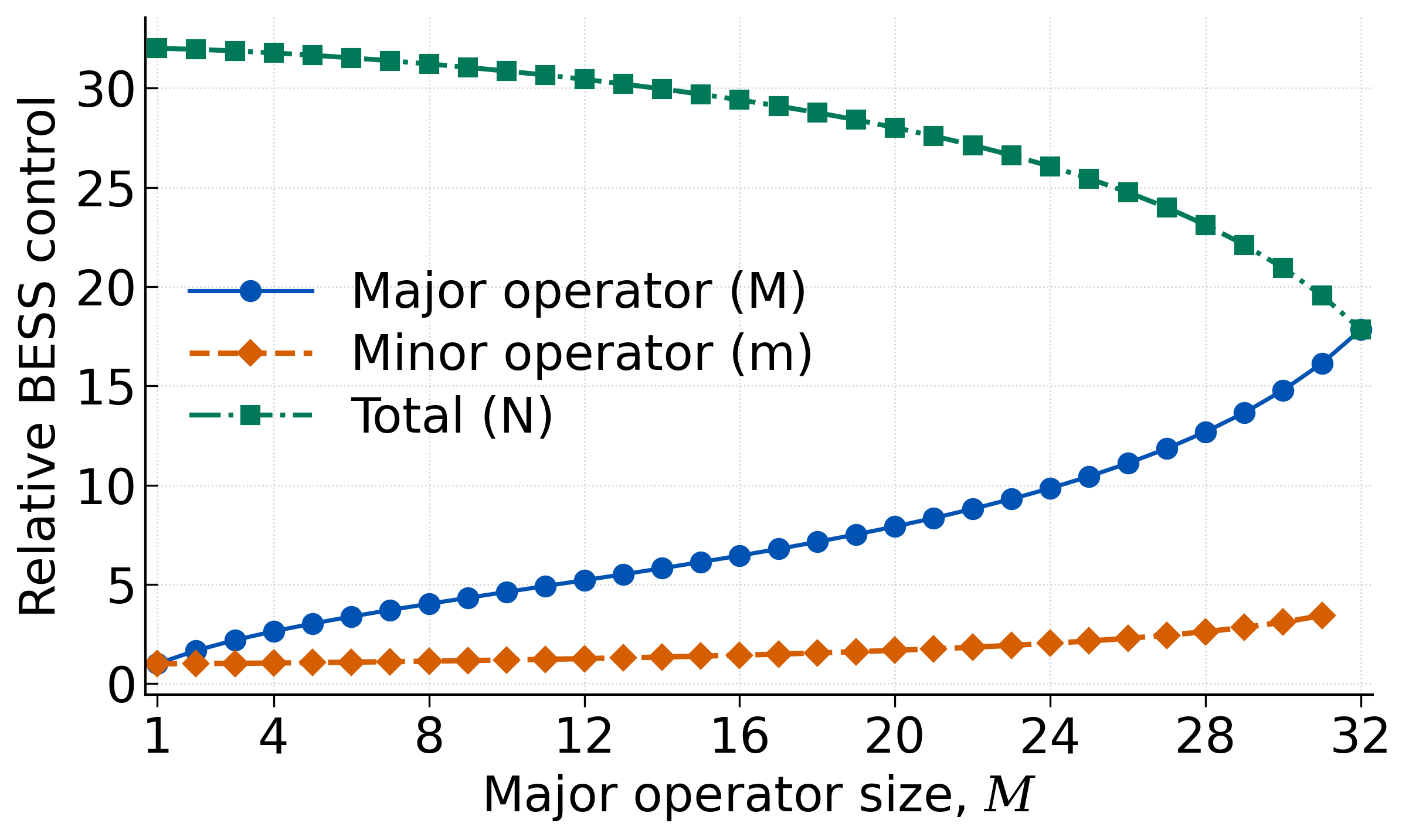}
        \includegraphics[width=0.325\linewidth]{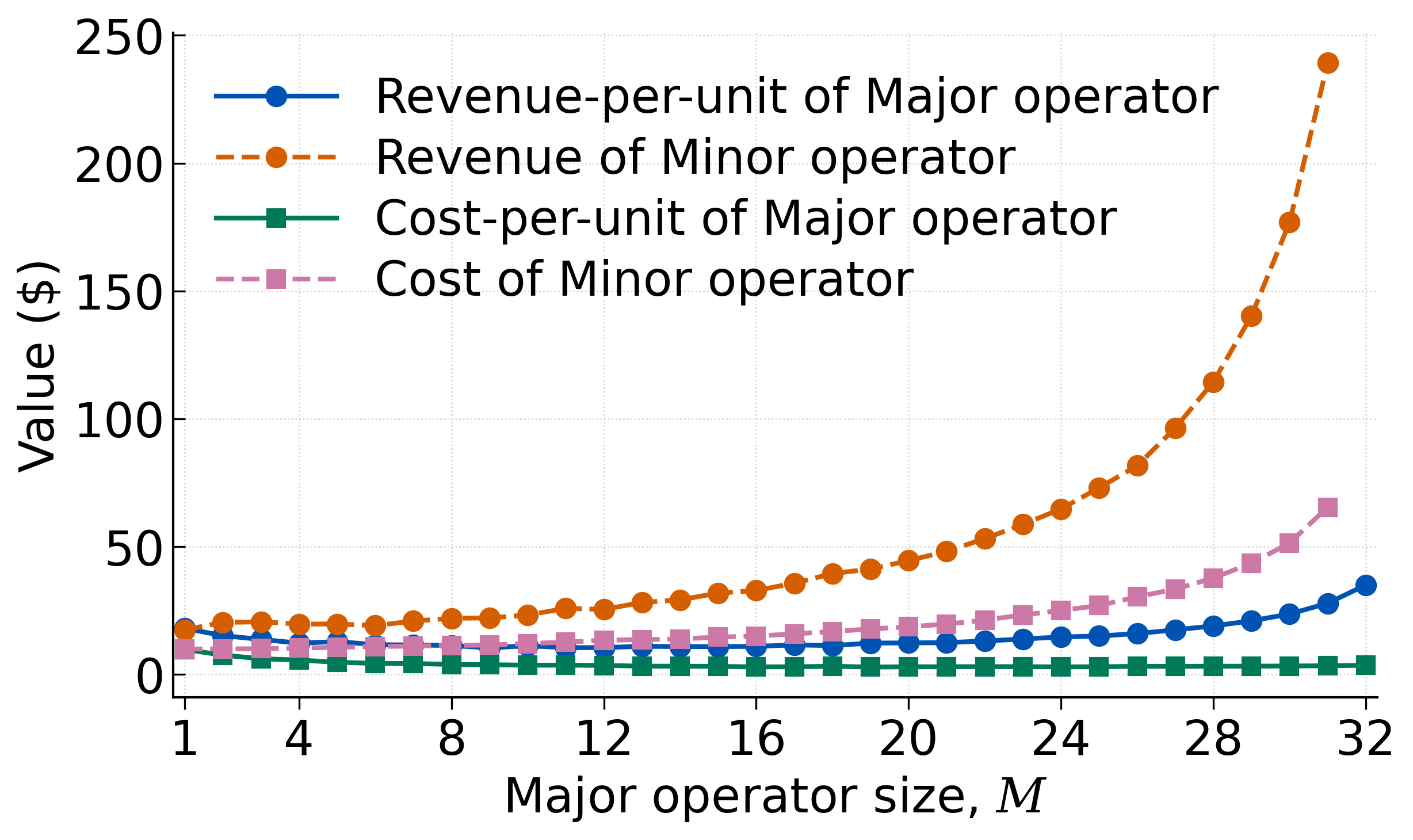}
        \includegraphics[width=0.325\linewidth]{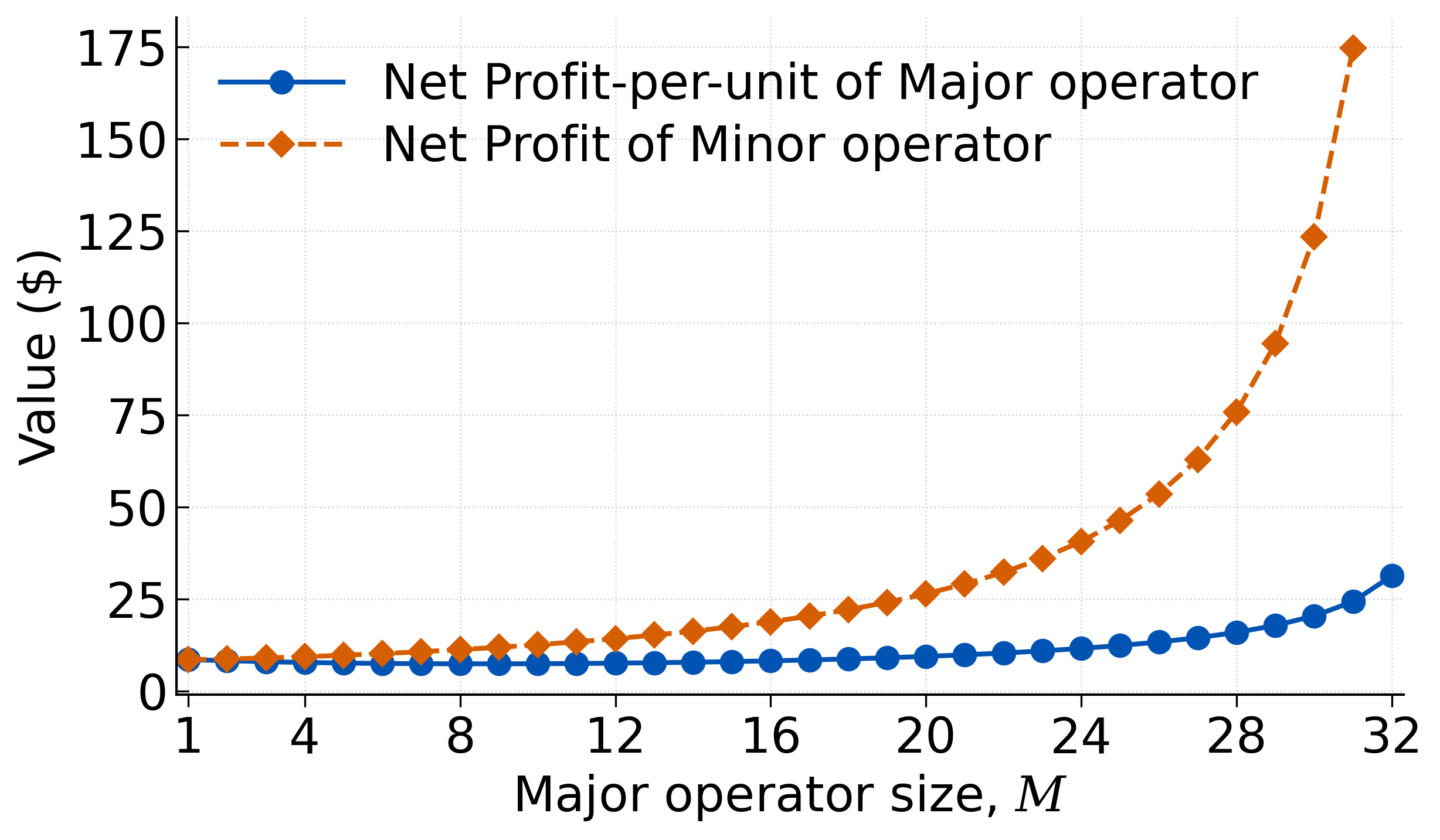}
       \caption{Average dispatch rate (left panel), per-unit revenue \& cost (middle panel) and net profit (right panel) of Major and Minor operators. We consider a market consisting of $N=32$ BESS units, with 1 Major operator controlling $M=1,\ldots,32$ units and $N-M$ Minor operators controlling 1 unit each. The left panel is normalized such that '1' represents the average maximum dispatch $\E[\max_{t \le T^\circ} |\hat{\alpha}_t|]$ of a single-unit operator in a homogeneous market of 32 assets, so that the green and orange curves start at 1.
       }
        \label{fig:major_minor_profit}
\end{figure}

To further understand the oligopoly effect of a monopolist better coordinating respective revenues and costs, the middle and right panels of Figure \ref{fig:major_minor_profit} visualize the revenues, costs and net profits of the Major and Minor operators, expressed in per-unit (i.e.~normalizing the Major operator's quantities by $M$) quantities. We observe that from a marginal per-unit perspective, the bigger beneficiary is actually the Minor agent: they get rid of competition and gain an edge. Their costs rise, but their revenues rise much faster. On the other hand, on a per-unit basis, the costs of the Major operator decline (and are essentially constant for $M>10$) due to diversification, but their revenue has a $U$-shape pattern and does not grow much for $M$ large. As a result for $M>20$, a Minor agent is actually more profitable on a per-unit basis. In other words, if a Minor operator is proposed to join an existing coalition of centrally run $M$ BESS and share the profits, they are better off remaining independent and can generate more profit on their own, if costs of both types of operators are scaled in the manner of \eqref{eq:J-major}.

\subsection{Oligopoly with a Major Operator vs Duopoly}\label{sec:duopoly}

As a complement to the above analysis, we consider the impact of competition on a Major operator of a fixed size. To this end we fix $N$ and take a single Major operator $\cR_M=1$ who controls a fixed fraction of all the assets, with the remaining BESS controlled by $\cR_m$ Minor operators of equal size $m$, $\calR_m = \frac{N-M}{m}$, see bottom row  diagram in Figure \ref{fig:market-size-32-demo1}.

In Figure \ref{fig:major_minor_profit_m}, we  take $N=32, M=20$ and then consider $m \in \{1,2,3,4,6,12\}$. For instance the first case corresponds to a single Major operator and 12 Minor operators of size 1, the last case corresponds to a duopoly $\cR_M+\cR_m = 2$ with an operator of size 20 and an operator of size 12. As $m$ increases, we again have the oligopoly effects: there are fewer controllers in the market, so the cannibalization is mitigated and agents internalize the cost of over-dispatch. When the market ultimately consists of two operators with assets of the same size ($M=m=16$), aggregate control falls to approximately $68.6\%$ of the baseline. This mirrors the result from Section \ref{sec:major}: consolidation reduces competition, which lowers aggregate control.

The left panel of Figure \ref{fig:major_minor_profit_m} shows the relative battery dispatch of the Major and Minor operators as $m$ varies. With fewer competitors, the market becomes more concentrated and the Major operator has stronger incentives to exert more control. As a result,  as $m$ increases, the magnitude of Major operator's charge/discharge rises. Compared to having to deal with 12 Minor competitors, in a duopoly the Major operator will dispatch 51\% more.  For the Minor operators, as their number shrinks, their dispatch grows almost linearly. For instance, if 12 Minor operators of size $m=1$ are replaced with 6 Minor operators of size $m=2$, their respective dispatch grows by 91\%, i.e., scales almost linearly. We refer to \cite{butters2025soaking} for a more structural comparison between monopoly, duopoly and full competition among strategic BESS.

\begin{figure}[!htbp]
    \centering
        \includegraphics[width=0.325\linewidth]{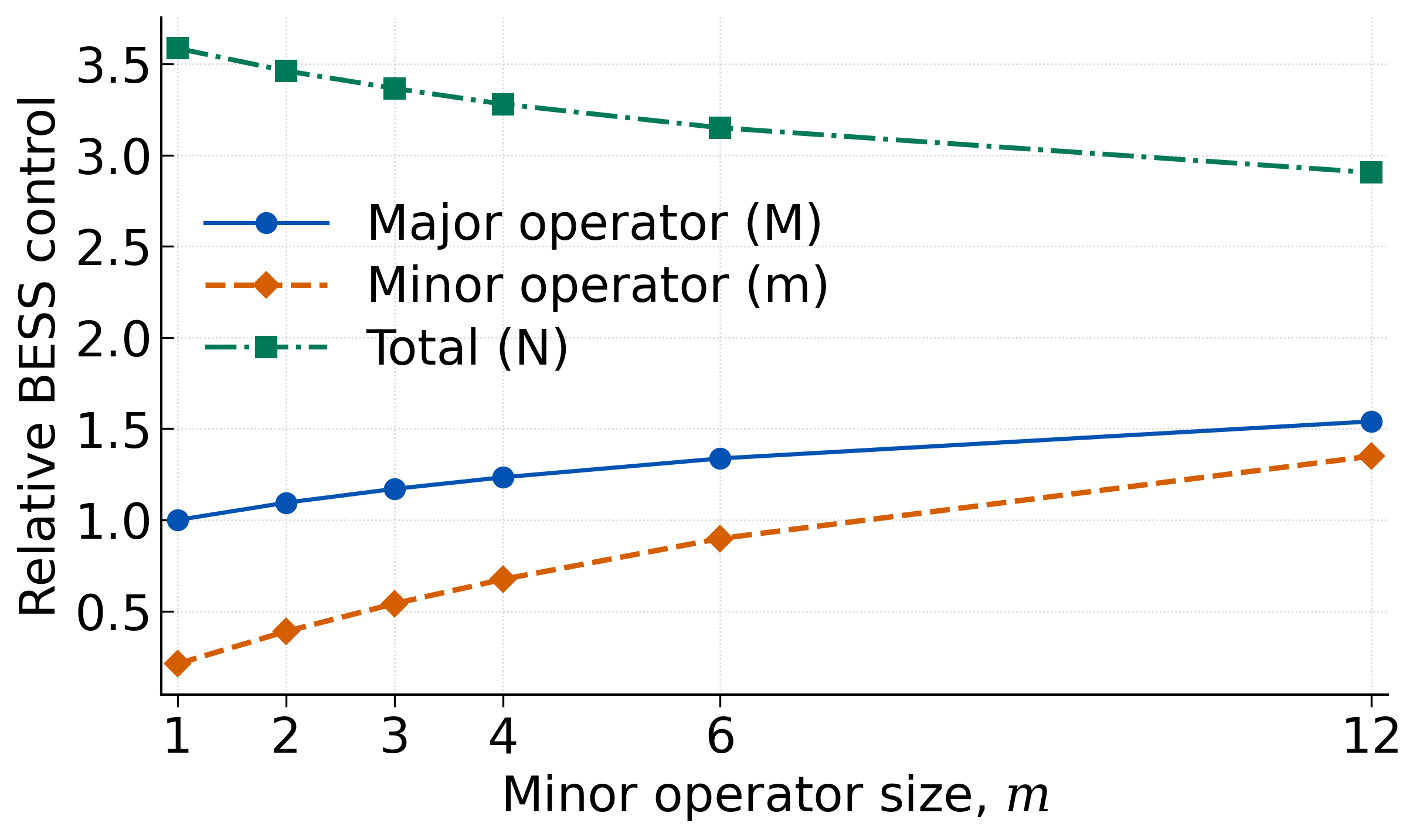}
        \includegraphics[width=0.325\linewidth]{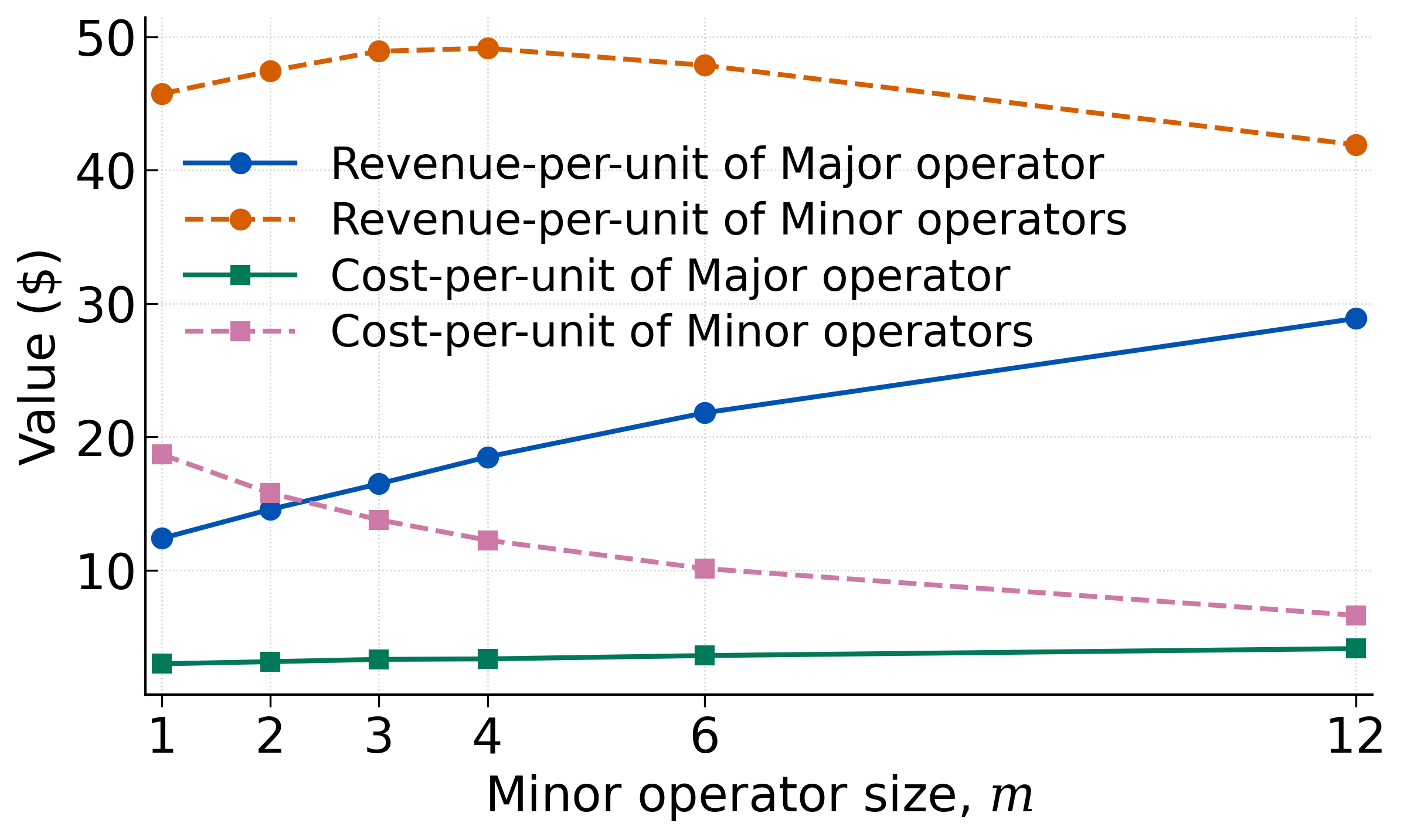}
        \includegraphics[width=0.325\linewidth]{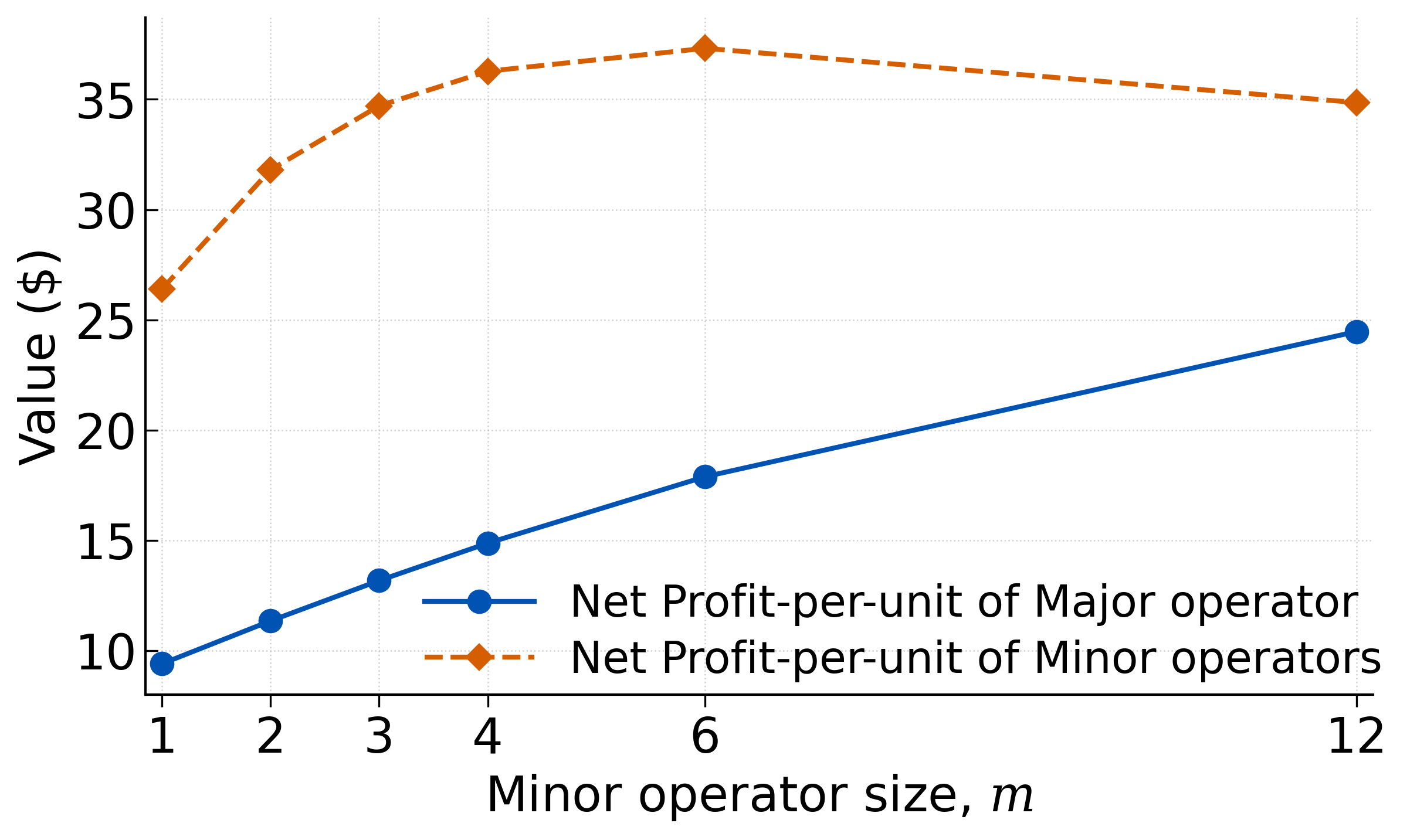}
        \caption{Average dispatch rate (\emph{left panel}), per-unit revenue \& cost (\emph{middle panel}) and net profit (\emph{right panel}) of Major and Minor operators. We consider a market consisting of $N=32$ BESS units, with ${\cal R}_M=1$ Major operator controlling $M=20$ units and ${\cal R}_m= (N-M)/m$ Minor operators controlling $m$ units each. The left panel is normalized such that `1' corresponds to the average maximum dispatch $\E[\max_{t \le T^\circ} |\hat{\alpha}^{(M)}_t|]$ of the Major operator with unit-size minor operators, $M=20,m=1$, so the blue curve starts out at 1.}
        \label{fig:major_minor_profit_m}
\end{figure}

As the number of Minor operators is reduced, their costs shrink dramatically (middle panel of Figure \ref{fig:major_minor_profit_m}) while their revenue is very stable, creating a major gain on a per-unit basis. In tandem, the profitability of the Major operator rises essentially linearly in $m$, though at a much smaller rate. Hence the primary benefactors of a market consolidation are the Minor operators.  Notably,  The right panel of Figure \ref{fig:major_minor_profit_m} shows that there is a loss in efficiency comparing Tri-opoly $m=6$ with a Duopoly $m=12$. Thus, there is no incentive for two smaller operators that are of size 6 to merge into a single duopolist of size 12. A coalition of size 6 is stable in the sense that further consolidation is uneconomic.

\section{Asymptotic Behavior of a Large $N$-player Game}\label{sec:asymptotics}

In this section, we study the asymptotic behavior of the Nash equilibrium in the homogeneous game as the number of operators $N \to \infty$. This provides insights regarding the behavior of very large markets, and moreover allows to quantify how the number of operators $N$ affects prices. Intuitively, the arbitrage driven behavior of each BESS leads to a completely flat price in the limit. Moreover, the competitive effects of over-dispatching depress prices and make everyone unprofitable. 

To do the asymptotic analysis,  we use a regular perturbation approach, expanding relevant quantities characterizing $(V, \hat\alpha)$ in \eqref{homo_ode_system} in powers of $\frac{1}{N}$ and computing the solutions up to a prescribed order. More precisely, letting $\epsilon = \frac{1}{N}$, we expand  $\tilde p_i(t,\epsilon)$\footnote{In what follows, whenever a quantity depends on $\epsilon = 1/N$, we will include $\epsilon$ explicitly as an argument to emphasize this dependence.} and $\tilde r_i(t,\epsilon)$, the solution to the system \eqref{homo_ode_system}, as 
\begin{align}
    \tilde p_i(t,\epsilon) = \tilde p_i^{(0)}(t) + \tilde p_i^{(1)}(t)\epsilon + \cO(\epsilon^2),\label{p_expansion}\\
    \tilde r_i(t,\epsilon) = \tilde r_i^{(0)}(t) + \tilde r_i^{(1)}(t)\epsilon + \cO(\epsilon^2).\label{r_expansion}
\end{align}
We then characterize $\tilde p_i^{(0)}, \tilde p_i^{(1)}, \tilde r_i^{(0)}, \tilde r_i^{(1)}$, which in turn show the asymptotic behavior of the equilbrium control $\hat \alpha^i_t$ and the state dynamics $\hat{\bS}_t$.   

By \eqref{homo_optimal control}, the equilibrium control depends on $\tilde p_2(t, \epsilon), \tilde p_4(t, \epsilon), \tilde p_5(t, \epsilon), \tilde r_2(t, \epsilon)$. Moreover, \eqref{homo_ode_system} shows that the ODEs for $\tilde p_4(t, \epsilon)$ and  $\tilde p_5(t, \epsilon)$ are coupled with those for $\tilde p_6(t, \epsilon)$ and $\tilde p_7(t, \epsilon)$. In addition, the ODE for  $\tilde p_2(t, \epsilon)$ (and $\tilde r_2(t, \epsilon)$) involves  $\tilde p_3(t, \epsilon)$ (and $\tilde r_3(t, \epsilon)$). Accordingly, we focus on the collection $$\tilde p_2(t, \epsilon),\; \tilde p_3(t, \epsilon), \; \tilde p_4(t, \epsilon), \; \tilde p_5(t, \epsilon), \; \tilde p_6(t, \epsilon), \;\tilde p_7(t, \epsilon), \;\tilde r_2(t, \epsilon), \; \tilde r_3(t, \epsilon).$$ The results are summarized as follows. All proofs are presented in Appendix~\ref{appen:asymptotics}. 

\begin{proposition} \label{p0_sol}
(i)    The leading order coefficients $\tilde p_i^{(0)}(t)$ in the expansion of $\tilde p_i(t, \epsilon)$ satisfy \\ $\tilde p_3^{(0)}  = \tilde p_5^{(0)} = \tilde p_6^{(0)} = \tilde p_7^{(0)} \equiv 0$, as well as 
\begin{equation}\label{p2^0 sol}
    \tilde p_2^{(0)}(t) = \int_t^T e^{-\kappa(s-t)}(c_3(T-s)+c_4)b(s)\,ds,
\end{equation}
\begin{equation} \label{p4^0 sol}
        \tilde p_4^{(0)}(t) = \sqrt{\frac{d^2c_3}{4d-4c_2}}\frac{\big(c_4+\sqrt{\frac{d^2c_3}{4d-4c_2}}\big)e^{2\sqrt{\frac{(4d-4c_2)c_3}{d^2}(T-t)}}+c_4-\sqrt{\frac{d^2c_3}{4d-4c_2}}}{\big(c_4+\sqrt{\frac{d^2c_3}{4d-4c_2}}\big)e^{2\sqrt{\frac{(4d-4c_2)c_3}{d^2}(T-t)}}-c_4+\sqrt{\frac{d^2c_3}{4d-4c_2}}}.
\end{equation}

\noindent (ii) The leading order coefficients  $\tilde r_i^{(0)}(t)$ in the expansion of $\tilde r_i(t, \epsilon)$, $i = 2, 3$ satisfy $\tilde r_3^{(0)}(t) \equiv 0 $ and
    \begin{align}\label{r_2^0 sol}
       \tilde r_2^{(0)}(t) &=  2\int_{t}^{T} (c_3(T-s)+c_4)a(s)-c_3 \zeta(s)\,ds + 2\int_t^T\int_s^T \kappa e^{-\kappa(u-s)}\theta(s)(c_3(T-u)+c_4)b(u)\,du\,ds \nonumber\\
       &\quad - 2c_4 \zeta(T).
    \end{align}

\noindent(iii) The first order corrections $\tilde p_i^{(1)}(t)$ to $\tilde p_i(t, \epsilon)$, $i = 4, 5$ solve the coupled ODE system
\begin{equation}\label{p1_sys_p4}
    \left\{
    \begin{aligned}
        - \dot{\tilde{p}}_4^{(1)}(t) &= \Big( \frac{8}{d} - \frac{8c_{2}}{d^{2}} \Big) 
\big( \tilde p_4^{(0)}(t) \big)^{2} 
+ \Big( \frac{12}{d} - \frac{8c_{2}}{d^{2}} \Big) 
\tilde p_4^{(0)}(t)\, \tilde p_5^{(1)}(t) 
+ \Big( \frac{8}{d} - \frac{8c_{2}}{d^{2}} \Big) 
\tilde p_4^{(0)}(t)\, \tilde p_4^{(1)}(t), \\
 -\dot{\tilde{p}}_5^{(1)}(t) &= 
 \Big(\frac{4}{d} - \frac{4c_{2}}{d^{2}}\Big) (\tilde p_4^{(0)}(t))^{2},
    \end{aligned}
    \right.
\end{equation}
with terminal conditions $\tilde p_4^{(1)}(T) = \tilde p_5^{(1)}(T) = 0$. Moreover, $\tilde p_6^{(1)}(t) = \tilde p_7^{(1)}(t) \equiv 0$.

\end{proposition}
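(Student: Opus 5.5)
Substitute the expansions \eqref{p_expansion}--\eqref{r_expansion} into the homogeneous Riccati system \eqref{homo_ode_system}--\eqref{homo_ode_system_r} and collect powers of $\epsilon=1/N$. First I expand the auxiliary quantities $g_1,\dots,g_8$ and $\bh_j$ of Theorem~\ref{Homo_case}. Since $\eta_1 = 1+Nc_1/d$, we have $1/\eta_1 = \frac{d}{c_1}\epsilon - \frac{d^2}{c_1^2}\epsilon^2+\cO(\epsilon^3)$ and $\eta_0/\eta_1 \to \bar P/c_1$. Hence terms of the form $N c_1/(d^2\eta_1)$ tend to $1/d$, with an explicit $\cO(\epsilon)$ correction. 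Applying this to $g_1$ gives $g_1 = -\frac{2\tilde p_2}{d}+\frac{2\tilde p_2}{d}+\cO(\epsilon)$, so $g_1^{(0)}=0$. Similarly $g_2^{(0)}=-2\tilde p_4^{(0)}/d$, and $g_3$ picks up $\frac{2}{d}(N-1)\epsilon\,\tilde p_5 + \cO(\epsilon)$ terms. A key point I must verify is that $N\tilde p_5$, $N\tilde p_6$, $N\tilde p_3$ stay $\cO(1)$. To make the leading-order analysis consistent, I would assume the ansatz that the cross coefficients $\tilde p_3,\tilde p_5,\tilde p_6,\tilde p_7,\tilde r_3$ are $\cO(\epsilon)$; I then check that this is self-consistent.

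With this ansatz the order-$\epsilon^0$ equations for $\tilde p_3,\tilde p_5,\tilde p_6,\tilde p_7,\tilde r_3$ are linear homogeneous in these unknowns and have zero terminal data, so uniqueness gives $\tilde p_3^{(0)}=\tilde p_5^{(0)}=\tilde p_6^{(0)}=\tilde p_7^{(0)}=\tilde r_3^{(0)}\equiv0$. The $\tilde p_4$ equation at order zero becomes a scalar Riccati equation,
\[
-\dot{\tilde p}_4^{(0)} = c_3 - \tfrac{4d-4c_2}{d^2}\,(\tilde p_4^{(0)})^2 ,
\]
obtained by combining $-c_1 g_2 g_6 + c_2 g_2^2 + \bh_2^\top\bh_6$ at leading order. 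The terminal condition is $\tilde p_4^{(0)}(T)=c_4$. Solving by the standard $\tanh$/coth formula yields \eqref{p4^0 sol}.

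For $\tilde p_2^{(0)}$ I would show the leading equation reduces to the linear ODE
\[
-\dot{\tilde p}_2^{(0)}=-\kappa\tilde p_2^{(0)}+(c_3(T-t)+c_4)\,b(t).
\]
The factor $c_3(T-t)+c_4$ should arise as the leading-order aggregate coefficient $\tilde p_4^{(0)}+N\tilde p_5$-type combination multiplying $b$ through $\bh_2^\top \bh_5$. To identify it, I would isolate the $\cO(1)$ combination $\tilde p_4+(N-1)\tilde p_5$, which satisfies a linear equation with source $c_3$ and terminal value $c_4$, hence equals $c_3(T-t)+c_4$ at leading order. Variation of constants then gives \eqref{p2^0 sol}. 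Likewise $\tilde r_2^{(0)}$ solves a linear ODE with sources $(c_3(T-t)+c_4)a(t) - c_3\zeta(t)$ and $\kappa\theta\,\tilde p_2^{(0)}$, with terminal value $-2c_4\zeta(T)$; integrating and inserting \eqref{p2^0 sol} gives \eqref{r_2^0 sol}.

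For part (iii) I collect the order-$\epsilon$ terms. The $\tilde p_5$ equation at order $\epsilon$ is sourced only by $(\tilde p_4^{(0)})^2$ terms from $-\frac12c_1(g_2g_7+g_3g_6)+c_2g_2g_3$, giving the second line of \eqref{p1_sys_p4}. The $\tilde p_4$ equation at order $\epsilon$ linearizes the Riccati term around $\tilde p_4^{(0)}$ and picks up $\tilde p_5^{(1)}$ through $(N-1)\tilde p_5$ inside $g_2$. For $\tilde p_6^{(1)},\tilde p_7^{(1)}$, the order-$\epsilon$ equations are again homogeneous with zero terminal data, since their sources $g_3^2$ and $g_3g_7$ are $\cO(\epsilon^2)$; hence they vanish.

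The main obstacle is bookkeeping the scaling: the sums weighted by $N-1$ and $N-2$ mix orders, so one must prove rather than assume that $N\tilde p_5$ and $N\tilde p_6$ remain bounded. I would handle this by rewriting the system in the rescaled variables $N\tilde p_5$, $N^2\tilde p_6$, etc. I would then appeal to smooth dependence of ODE solutions on the parameter $\epsilon$, using well-posedness from Proposition~\ref{parameters_prop} for $N\ge5$, which justifies the regular expansion.
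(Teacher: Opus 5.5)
Your overall route is the paper's: expand $g_1,\dots,g_8$ in $\epsilon$, match orders, eliminate the cross coefficients by uniqueness with zero terminal data, and feed the identity $\tilde p_4^{(0)}+\tilde p_5^{(1)}=c_3(T-t)+c_4$ into the $b$- and $a$-sources of the $\tilde p_2^{(0)}$ and $\tilde r_2^{(0)}$ equations. That identity comes from adding the $\tilde p_4^{(0)}$ Riccati equation to the $\tilde p_5^{(1)}$ equation.

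\textbf{The vanishing step in (i)--(ii) is circular as written.} You \emph{posit} that $\tilde p_3,\tilde p_5,\tilde p_6,\tilde p_7,\tilde r_3$ are $\cO(\epsilon)$, and then say uniqueness forces their leading terms to vanish. Under that ansatz they vanish by definition. The paper needs no such ansatz, because exactly
\[
g_3=\frac{2\epsilon\,\big(c_1\tilde p_4-(c_1+d)\tilde p_5\big)}{d\,(c_1+d\epsilon)}.
\]
Hence every $N$-weighted term in the $(\tilde p_4,\tilde p_5,\tilde p_6,\tilde p_7)$ block, namely $(N-1)\tilde p_5g_3$, $(N-2)\tilde p_5g_3$, $(N-2)g_3\tilde p_6$ and $(N-3)\tilde p_6g_3$, is already $\cO(1)$. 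That block is therefore regular in $\epsilon$ with no scaling assumption.

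Its $\cO(1)$ equations are homogeneous in $(\tilde p_5^{(0)},\tilde p_6^{(0)},\tilde p_7^{(0)})$. First $-\dot{\tilde p}_6^{(0)}=-\frac{4}{c_1}\tilde p_5^{(0)}\tilde p_6^{(0)}$. Then $-\dot{\tilde p}_5^{(0)}=-\frac{2}{c_1}(\tilde p_5^{(0)})^2$, which is quadratic, not linear. Finally $\tilde p_7^{(0)}$ satisfies a linear equation. Zero terminal data forces all three to vanish.

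Only after that, together with $\tilde p_6^{(1)}=0$ from the next order, are the terms $(N-1)\tilde p_5 b$ and $(N-2)\tilde p_6 b$ (and their $a$-analogues) in the $\tilde p_2,\tilde p_3,\tilde r_2,\tilde r_3$ equations seen to be $\cO(1)$. Then $\tilde p_3^{(0)}=\tilde r_3^{(0)}=0$ follow from homogeneous linear equations.

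Your rescaling remedy ($N\tilde p_3$, $N\tilde p_5$, $N\tilde r_3$, $N^2\tilde p_6$) is sound and adds something the paper lacks. The rescaled vector field is smooth at $\epsilon=0$, and the $\epsilon=0$ system is solvable on all of $[0,T]$. Smooth dependence on $\epsilon$ then justifies the expansion \eqref{p_expansion}, which the paper only postulates. The input there is the limit system, not Proposition~\ref{parameters_prop}, whose invariant region only gives $\tilde p_5=\cO(1)$.

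\textbf{The bookkeeping in (iii) also needs correcting.}
\begin{itemize}
\item The $\cO(\epsilon)$ source for $\tilde p_5^{(1)}$ does not come only from $-\frac12c_1(g_2g_7+g_3g_6)+c_2g_2g_3$. Those terms give $\frac2d\tilde p_4^{(0)}(\tilde p_4^{(0)}+\tilde p_5^{(1)})-\frac{4c_2}{d^2}(\tilde p_4^{(0)})^2$. The term $\tilde p_4g_3$ supplies the second $\frac2d(\tilde p_4^{(0)})^2$. The $\tilde p_4^{(0)}\tilde p_5^{(1)}$ terms cancel against $2\tilde p_5g_2+(N-2)\tilde p_5g_3$.
\item The $\tilde p_4^{(1)}$ equation is not just a linearization plus a $\tilde p_5^{(1)}$ term. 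The own-share $\epsilon\tilde p_4$ inside $g_2$ and $g_6$ (for example $g_2^{(1)}=\frac2d(\tilde p_4^{(0)}+\tilde p_5^{(1)}-\tilde p_4^{(1)})$) generates the inhomogeneous source $(\frac8d-\frac{8c_2}{d^2})(\tilde p_4^{(0)})^2$.
\item The linearization you describe gives $-(\frac8d-\frac{8c_2}{d^2})\tilde p_4^{(0)}\tilde p_4^{(1)}$, the derivative of $c_3-(\frac4d-\frac{4c_2}{d^2})(\tilde p_4^{(0)})^2$. The $+$ sign printed in \eqref{p1_sys_p4} and in the paper's proof is inconsistent with the paper's own leading-order Riccati equation. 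A careful execution of your plan will therefore not reproduce that line verbatim. This does not affect (i)--(ii), which use only the $\tilde p_5^{(1)}$ equation.
\end{itemize}
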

 
With the characterizations of  $\tilde p_i^{(0)}(t)$, $\tilde p_i^{(1)}(t)$, and $\tilde r_i^{(0)}(t)$ in hand, we now analyze the expansions for the individual $\hat\alpha_t^{i,\epsilon} = \hat\alpha^{i,\epsilon}(t,Q_t,\hat \bS_t)$ and aggregate control processes. 

\begin{theorem}\label{regional_control}
    In terms of $\epsilon$, the equilibrium control process $\hat\alpha_t^{i,\epsilon}$ of agent $i$ admits the expansion 
\begin{align}\label{regional_optimal}
\hat\alpha_t^{i,\epsilon}
&= \frac{2}{d}\tilde p_4^{(0)}(t)\big(\bar S_t-\hat S_t^i\big)  + \epsilon\Big[
\Big(1-\frac{2\tilde p_2^{(0)}(t)}{c_1}\Big)Q_t
+ \frac{2}{d}\big(\tilde p_5^{(1)}(t)-\tilde p_4^{(1)}(t)\big)\hat S_t^i
- \Big(\frac{1}{c_1}\tilde r_2^{(0)}(t)+\frac{\bar{P}}{c_1}\Big)
\Big] \nonumber\\
&\quad + \epsilon^2\Big(
\frac{2}{d}\tilde p_4^{(1)}(t)
-\frac{2(c_1+d)}{dc_1}\tilde p_5^{(1)}(t)
-\frac{2}{c_1}\tilde p_4^{(0)}(t)
\Big)\mathbf 1_{N-1}^T\hat \bS_t^{-i}
+ \cO(\epsilon^2).
\end{align}
where $\bar{S}_t = \frac{1}{N}\sum_{j=1}^{N} \hat S_t^j$ and $\hat \bS_t^{-i} = [\ldots,\hat S_t^{i-1},\hat S_t^{i+1},\ldots]^\top$. 

At equilibrium, the aggregate control process $\sum_{i=1}^N\hat{\alpha}_t^{i,\epsilon}$ is 
    \begin{equation}\label{agg_optimal}
\sum_{i=1}^N \hat{\alpha}_t^{i,\epsilon}
=
\Big(1-\frac{2\tilde p_2^{(0)}(t)}{c_1}\Big) Q_t
- \frac{2}{c_1}\big(c_3(T-t)+c_4\big)\bar S_t
- \Big(\frac{1}{c_1}\tilde r_2^{(0)}(t)+\frac{\bar{P}}{c_1}\Big)
+ \cO(\epsilon).
\end{equation}
\end{theorem}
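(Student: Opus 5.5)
The approach is to substitute the expansions \eqref{p_expansion}--\eqref{r_expansion}, together with the leading-order facts of Proposition~\ref{p0_sol}, into the feedback formula \eqref{homo_optimal control}. That is, we expand each of $g_1,\dots,g_4$ in powers of $\epsilon=1/N$ and then regroup the terms in $s^i$ and $\mathbf 1_{N-1}^\top\bs^{-i}$.

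The first step is to rewrite the $N$-dependent constants. Since $\eta_1 = 1+Nc_1/d$, we have $1/(d\eta_1) = \epsilon/(c_1+\epsilon d) = \frac{\epsilon}{c_1}\big(1-\frac{d}{c_1}\epsilon+\cO(\epsilon^2)\big)$. Also $\eta_0/\eta_1 = N\bar P/(d+Nc_1) = \frac{\bar P}{c_1}+\cO(\epsilon)$.

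Next we expand each $g_k$ in turn.

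\emph{$g_1$.} We get $g_1 = \frac{c_1}{d\eta_1} + \frac{2\tilde p_2}{d}\big(\frac{Nc_1}{d\eta_1}-1\big)$. Since $\frac{Nc_1}{d\eta_1}-1 = -\frac{1}{\eta_1}$, this gives $g_1 = \frac{1}{\eta_1}\big(\frac{c_1}{d}-\frac{2\tilde p_2}{d}\big) = \epsilon\big(1-\frac{2\tilde p_2^{(0)}}{c_1}\big)+\cO(\epsilon^2)$.

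\emph{$g_4$.} In the same way $g_4 = -\frac{1}{\eta_1}\frac{\tilde r_2+\bar P}{d}$, which becomes $-\epsilon\frac{\tilde r_2^{(0)}+\bar P}{c_1}+\cO(\epsilon^2)$.

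\emph{$g_2$ and $g_3$.} This is the delicate step, because $(N-1)\tilde p_5$ is $\cO(1)$: indeed $\tilde p_5=\epsilon\tilde p_5^{(1)}+\epsilon^2\tilde p_5^{(2)}+\dots$ since $\tilde p_5^{(0)}=0$. Write $A:=\tilde p_4+(N-1)\tilde p_5 = \tilde p_4^{(0)}+\tilde p_5^{(1)}+\epsilon(\tilde p_4^{(1)}-\tilde p_5^{(1)}+\tilde p_5^{(2)})+\dots$. Then $\frac{2c_1A}{d^2\eta_1}=\frac{2A}{d}\big(\epsilon-\frac{d}{c_1}\epsilon^2\big)+\dots$. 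Hence
\[
g_2 = -\frac{2}{d}\tilde p_4^{(0)} + \epsilon\frac{2}{d}\big(\tilde p_5^{(1)}+\tilde p_4^{(0)}-\tilde p_4^{(1)}\big)+\cO(\epsilon^2),\qquad g_3 = \epsilon\frac{2}{d}\tilde p_4^{(0)}+\epsilon^2(\cdots).
\]

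We then assemble. We write $g_2 s^i+g_3\mathbf 1^\top\bs^{-i}$ and split off $\frac{2}{d}\tilde p_4^{(0)}(\bar S-s^i)$, using $\epsilon\,\mathbf 1^\top\bs^{-i} = \bar S - \epsilon s^i$. What remains at order $\epsilon$ is the $s^i$-coefficient $\frac{2}{d}(\tilde p_5^{(1)}-\tilde p_4^{(1)})$. The $\epsilon^2$-coefficient of $\mathbf 1^\top\bs^{-i}$ is read off from the second-order part of $g_3$; in it the $-\frac{d}{c_1}\epsilon^2$ correction produces the $-\frac{2}{c_1}\tilde p_4^{(0)}$ and $-\frac{2}{c_1}\tilde p_5^{(1)}$ pieces. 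This term must be kept even though it is formally $\epsilon^2$, because $\mathbf 1^\top\bs^{-i}=\cO(N)$.

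\textbf{Main obstacle.} The $\epsilon^2$ coefficient of $g_3$ needs $\tilde p_5^{(2)}$, which Proposition~\ref{p0_sol} does not provide. I would go back to the $\tilde p_5$ ODE in \eqref{homo_ode_system} and argue that the contributions of $\tilde p_5^{(2)}$ cancel between the $A$-term and $-2\tilde p_5/d$ inside $g_3$. That cancellation leaves exactly $\frac{2}{d}\tilde p_4^{(1)}-\frac{2(c_1+d)}{dc_1}\tilde p_5^{(1)}-\frac{2}{c_1}\tilde p_4^{(0)}$. Checking this bookkeeping is where I expect most of the care to be needed.

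For the aggregate \eqref{agg_optimal}, sum over $i$: the terms $\bar S-\hat S^i$ cancel exactly. The $\epsilon$ terms summed $N$ times give the $Q_t$ and constant contributions, together with the $\bar S_t$-coefficient $\frac{2}{d}(\tilde p_5^{(1)}-\tilde p_4^{(1)})$ from the $\hat S^i_t$ term. The $\epsilon^2$ term summed gives $\epsilon^2\sum_i\mathbf 1^\top\hat\bS^{-i}_t=(1-\epsilon)\bar S_t$, which contributes its full $\epsilon^2$-coefficient times $\bar S_t$. In this combined coefficient of $\bar S_t$ the $\tilde p_4^{(1)}$ terms cancel. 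It then reduces to $-\frac{2}{c_1}\big(\tilde p_4^{(0)}+\tilde p_5^{(1)}\big)$.

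It remains to show that $\tilde p_4^{(0)}+\tilde p_5^{(1)}=c_3(T-t)+c_4$. Adding the $\tilde p_4^{(0)}$ Riccati equation (whose quadratic term is $-(\frac4d-\frac{4c_2}{d^2})(\tilde p_4^{(0)})^2$, consistent with the explicit solution \eqref{p4^0 sol}) to the $\tilde p_5^{(1)}$ equation in \eqref{p1_sys_p4} cancels the quadratic terms. This leaves $-\frac{d}{dt}(\tilde p_4^{(0)}+\tilde p_5^{(1)})=c_3$ with terminal value $c_4$. This also matches the kernel $c_3(T-s)+c_4$ appearing in \eqref{p2^0 sol}.
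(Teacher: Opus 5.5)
Your proposal is correct and follows essentially the same route as the paper. You expand $g_1,\dots,g_4$ in $\epsilon$, substitute into \eqref{homo_optimal control}, use $\epsilon\,\mathbf 1_{N-1}^\top\bs^{-i}=\bar s-\epsilon s^i$, sum over $i$, and close with $\tilde p_4^{(0)}+\tilde p_5^{(1)}=c_3(T-t)+c_4$. The step you flag as the main obstacle is purely algebraic and needs no return to the $\tilde p_5$ ODE: any second-order piece of $\tilde p_5$ enters $\frac{2c_1A}{d^2\eta_1}$ as $+\frac{2}{d}\epsilon^2\tilde p_5^{(2)}$ and $-\frac{2\tilde p_5}{d}$ as $-\frac{2}{d}\epsilon^2\tilde p_5^{(2)}$, so it cancels identically.
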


Notice that in the expansion of $\hat\alpha_t^{i,\epsilon}$ we include the term $\epsilon^2\big(
\frac{2}{d}\tilde p_4^{(1)}(t)
-\frac{2(c_1+d)}{dc_1}\tilde p_5^{(1)}(t)
-\frac{2}{c_1}\tilde p_4^{(0)}(t)
\big)\mathbf 1_{N-1}^T\hat \bS_t^{-i}$. Although this term is of order $\epsilon^2$ for each individual agent, it is multiplied by $1_{N-1}^T\hat\bS_t^{-i} = \sum_{j \neq i} \hat S_t^j$, and therefore contributes $\mathcal{O}(1)$ after aggregation. 

Substituting the aggregate equilibrium control \eqref{agg_optimal} into the expression for the price
$\hat P_t^{\epsilon}=\bar{P}
- c_1\big(Q_t-\sum_{j=1}^N \hat \alpha_t^{j,\epsilon}\big)$
yields the following

\begin{corollary}\label{cor:price}
    The equilibrium price process $\hat{P}^{\epsilon}_t$ admits the asymptotic expansion
    \begin{equation}\label{equi_price}
\hat P_t^{\epsilon}
=
-2\tilde p_2^{(0)}(t) Q_t
-2\big(c_3(T-t)+c_4\big)\bar S_t
- \tilde r_2^{(0)}(t)
+ \cO(\epsilon).
\end{equation}
\end{corollary}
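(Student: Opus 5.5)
This follows by direct substitution of the aggregate expansion \eqref{agg_optimal} from Theorem~\ref{regional_control} into the price formula. In the homogeneous setting with $\bW=\bJ_N$, all regional prices coincide:
\[
\hat P^{\epsilon}_t=\bar P-c_1\Big(Q_t-\sum_{j=1}^N\hat\alpha^{j,\epsilon}_t\Big).
\]
So the whole task is to insert the aggregate control and track the $\cO(\epsilon)$ remainder.

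\textbf{Step 1.} Multiply \eqref{agg_optimal} by $c_1$:
\[
c_1\sum_j\hat\alpha^{j,\epsilon}_t=(c_1-2\tilde p_2^{(0)}(t))Q_t-2(c_3(T-t)+c_4)\bar S_t-\tilde r_2^{(0)}(t)-\bar P+c_1\,\cO(\epsilon).
\]
Adding $\bar P-c_1Q_t$ to this, the $\bar P$ and $c_1Q_t$ terms cancel. What remains is exactly the right-hand side of \eqref{equi_price}, plus $c_1\,\cO(\epsilon)$. That term is again $\cO(\epsilon)$ because $c_1$ is fixed and does not depend on $N$.

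\textbf{Step 2.} Check the meaning of the remainder, which is the only real point of care. The $\cO(\epsilon)$ in \eqref{agg_optimal} multiplies state variables $Q_t$ and $\bar S_t$, and $\bar S_t$ depends on $N$. So the error should be read in the same sense as in Theorem~\ref{regional_control}: coefficients of $Q_t$, $\bar S_t$ and the constant term that are $\cO(\epsilon)$ uniformly on $[0,T]$, applied to processes whose moments are bounded uniformly in $N$.

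To support this, I would use Proposition~\ref{prop:homo_mean_control}. Its formulas for $m_S$, $V_{\bar S}$ and $C_{Q\bar S}$ involve only the $g_i$, which converge as $\epsilon\to0$ by Proposition~\ref{p0_sol}. They therefore give second moments of $\bar S_t$ and $Q_t$ that are bounded uniformly in $N$, so the remainder is $\cO(\epsilon)$ in $L^2$ as well.

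No further obstacle is expected, since the statement is a corollary of Theorem~\ref{regional_control} obtained by linear substitution.
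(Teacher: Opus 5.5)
Your proposal is correct and follows the paper's argument: substitute \eqref{agg_optimal} into $\hat P_t^{\epsilon}=\bar P-c_1\big(Q_t-\sum_j\hat\alpha_t^{j,\epsilon}\big)$, and the $\bar P$ and $c_1Q_t$ terms cancel. Your Step~2 on the meaning of the remainder is extra care the paper does not spell out here; note that the uniform-in-$N$ moment bounds need $\tilde g=g_2+(N-1)g_3$ to stay bounded in $N$, not merely each $g_i$ to converge.
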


To analyze the asymptotic behavior of the equilibrium price as $N \to \infty$, it remains to study the dynamics of the averaged SOC $(\bar S_t)$.
\begin{proposition}
\label{bar_S_t_limit}
The average SOC process $\bar S_t$ converges in $L^2$:
    \begin{equation}\label{eq:barS-limit}
         \bar{S}_t \xrightarrow{L^2} S_0 + \int_0^t (a(s) + b(s)Q_s)\,ds + \sigma\rho W_t^0.
    \end{equation}    
\end{proposition}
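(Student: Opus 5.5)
The plan is to work directly with the exact SDE for $\bar S_t$ given by the homogeneous equilibrium of Theorem~\ref{Homo_case}, and to use the expansions of Proposition~\ref{p0_sol} and Theorem~\ref{regional_control} only to show that the per-capita drift induced by the controls is $\cO(\epsilon)$.

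\textbf{Step 1 (exact dynamics of $\bar S_t$).} Averaging \eqref{S process} over $i$ and inserting \eqref{homo_optimal control} gives
\begin{equation*}
d\bar S_t = \big(a(t)+b(t)Q_t + g_1(t)Q_t + \tilde g(t)\bar S_t + g_4(t)\big)\,dt + \sigma\rho\, dW^0_t + \frac{\sigma\sqrt{1-\rho^2}}{N}\sum_{i=1}^N dW^i_t ,
\end{equation*}
where $\tilde g = g_2+(N-1)g_3$ as in Proposition~\ref{prop:homo_mean_control}. This uses $\frac1N\sum_i \hat\alpha^i_t = g_1Q_t+\tilde g\,\bar S_t+g_4$, which holds because $g_2 s^i + g_3\sum_{j\ne i}s^j$ summed over $i$ equals $N\tilde g\,\bar s$.

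Let $L_t := S_0+\int_0^t(a(s)+b(s)Q_s)\,ds+\sigma\rho W^0_t$ denote the claimed limit, and set $E_t := \bar S_t - L_t$. Then $E_0=0$ and
\begin{equation*}
dE_t = \big(g_1 Q_t + \tilde g\,(E_t+L_t) + g_4\big)\,dt + \frac{\sigma\sqrt{1-\rho^2}}{N}\sum_{i=1}^N dW^i_t .
\end{equation*}

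\textbf{Step 2 (coefficients are $\cO(\epsilon)$ uniformly on $[0,T]$).} We need
\begin{equation*}
\sup_{t\le T}\big(|g_1(t)|+|\tilde g(t)|+|g_4(t)|\big)\le C\epsilon .
\end{equation*}
Reading off the coefficients of $Q_t$, $\bar S_t$ and the constant in the aggregate control \eqref{agg_optimal}, we have $Ng_1 = 1-2\tilde p_2^{(0)}/c_1+\cO(\epsilon)$, $N\tilde g = -\frac{2}{c_1}(c_3(T-t)+c_4)+\cO(\epsilon)$ and $Ng_4=\cO(1)$. Alternatively, these follow directly from the formulas for $g_1,g_2,g_3,g_4$, using $\eta_1\sim N c_1/d$ and the expansions \eqref{p_expansion}--\eqref{r_expansion}.

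This is the step I expect to be the main obstacle. It requires the $\cO(\epsilon)$ remainders in the expansions of $\tilde p_i,\tilde r_i$ to hold uniformly in $t\in[0,T]$. I would justify this as follows. For $N\ge5$ the solution exists on all of $[0,T]$ by Proposition~\ref{parameters_prop}; its proof yields a bounded invariant region, which gives $N$-uniform bounds on $\tilde p_4,\tilde p_5,\tilde p_6,\tilde p_7$. After rescaling by $\epsilon$, the Riccati system depends smoothly on $\epsilon$ near $0$. Standard smooth dependence of ODE solutions on parameters on the compact interval $[0,T]$ then gives uniform first-order remainders. If the rescaled system turns out not to be regular at $\epsilon=0$, I would instead directly bound $N\tilde g = N g_2 + N(N-1)g_3$ using the $N$-uniform bounds on $\tilde p_4$ and $N\tilde p_5$ from the invariant region.

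\textbf{Step 3 (Gronwall).} Apply It\^o's formula to $E_t^2$ and take expectations. The martingale parts vanish, and the idiosyncratic noise contributes $\sigma^2(1-\rho^2)/N$ to the quadratic variation. Using $2xy\le x^2+y^2$ and Step 2,
\begin{equation*}
\frac{d}{dt}\E[E_t^2] \le C\epsilon\,\E[E_t^2] + C\epsilon\big(\E[Q_t^2]+\E[L_t^2]+1\big) + \frac{\sigma^2(1-\rho^2)}{N}.
\end{equation*}
Since $Q$ is an Ornstein--Uhlenbeck process with deterministic bounded $\theta$, we have $\sup_{t\le T}\E[Q_t^2]<\infty$, and hence also $\sup_{t\le T}\E[L_t^2]<\infty$. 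Gronwall's inequality then yields $\sup_{t\le T}\E[E_t^2]\le C'/N\to0$, which is the claimed $L^2$ convergence, with the explicit rate $\cO(N^{-1/2})$.
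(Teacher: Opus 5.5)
Your proposal is correct and is essentially the paper's proof. Both arguments compare $\bar S_t$ with the same limit process, apply It\^o's formula to the squared difference, and use the expansions of Proposition~\ref{p0_sol} and Theorem~\ref{regional_control} to make the per-capita control drift $\cO(1/N)$, while the averaged idiosyncratic noise contributes $\sigma^2(1-\rho^2)/N$; both then close with Gronwall to get an $\cO(1/N)$ mean-square bound. Your use of the exact coefficients $g_1,\tilde g,g_4$ together with $\bar S_t=E_t+L_t$ is if anything tidier than the paper's split $\frac1N G+R_N$, since it handles the state dependence of the remainder and avoids needing an $N$-uniform moment bound on $\bar S_t$.

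The one thing to discard is your fallback in Step~2. The invariant region only gives $0\le\tilde p_5\le\frac{c_1}{2(c_1+c_2)}\tilde p_4$ and does not control $N\tilde p_5$, so from it $\tilde g=-2\big(\tilde p_4+(N-1)\tilde p_5\big)/(d+Nc_1)$ is merely $\cO(1)$. The needed $\cO(\epsilon)$ bound therefore has to come from your rescaled regular-perturbation argument, which does go through, e.g.\ with $\tilde p_5=\epsilon q_5$ and $\tilde p_6=\epsilon^2 q_6$.
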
 
\noindent Observe that the sole stochastic driver in \eqref{eq:barS-limit} is the common noise $(W^0_t)$; the individual shocks $W^i$ vanish in the limit due to averaging across agents.

Economically, the limit $N \to \infty$ only makes sense when the operators are arbitrageurs with $\mu^i_t \equiv 0$; otherwise there would be infinite hybrid supply (even if the other quantities have well-defined limits). In the case $a(t) = b(t) = 0$, $\forall t$, we have that $\tilde{p}^{(0)}_2 = 0$ in \eqref{p2^0 sol} and $\tilde{r}^{(0)}_2$ in \eqref{r_2^0 sol} simplifies to $-2 c_3 \int_t^T \zeta(s) ds - 2 c_4 \zeta(T)$, making the asymptotic expansion in \eqref{equi_price} reduce to
$$
\hat P_t^{\epsilon} = -2\big(c_3(T-t)+c_4\big)(S_0 + \sigma \rho W^0_t)+ 2 c_3 \int_t^T \zeta(s) ds + 2c_4\zeta(T) + \cO(\epsilon).
$$

\section{Conclusion}\label{sec:conclusion}

In this article we construct a game-theoretic model for strategic interaction among battery energy storage systems during intraday dispatch. By explicitly modeling price formation we shift from the conventional view of BESS as a marginal price taker to BESS having significant market power, reflecting their growing aggregate dominance and the ensuing power price spread cannibalization. Moreover, we directly consider both pure-arbitrageur BESS and hybrid BESS that have their own generation source, building in a coupling between the net supply $Q_t$ and the self-generation sources $\mu^i_t$. Embedding the above features into the linear-quadratic paradigm guarantees a high degree of tractability for a generic heterogeneous multi-agent setup, for the first time offering a quantitative testbed to explore competitive BESS dispatch. Our flexible framework permits to directly study the role of asset size (Section \ref{sec:major-minor}) or of the mix between Arbitrageur and Hybrid operator types (Section \ref{sec:hybrid-arb}). We are also able to quantify the impact of additional players entering the market and the asymptotic behavior as $N \to \infty$.

While the linear-quadratic structure provides analytic tractability through Riccati equations and semi-explicit solutions, it abstracts away from important nonlinearities such as storage inefficiencies, degradation effects, and nonlinear cost penalties. In these non-LQ problems, the associated HJB equations become analytically intractable and numerically challenging. Recent advances in deep learning via neural network offer alternatives, such as fictitious play algorithms for finite-agent stochastic games  \citep{han2020deep,hu2021deep}. A related key extension is to allow for nonlinear inverse demand curves. In particular, the merit-order mechanism for power prices is intrinsically convex, creating an asymmetry between prices soaring when net load is large and price elasticity being much less when net load is low. Consequently, BESS will asymmetrically flatten price peaks and only marginally fill price valleys, leading to well-documented reduction in average price \citep{lamp2022large,karadumaneconomics}.

A different open problem is to address the Major-Minor non-LQ game, focusing on the interaction between a Major operator and a a continuum of Minor agents. The Major operator is a price maker while the Minor ones are price takers. Recent contributions \citep{feron2020price,fujii2022equilibrium, bichuch2025stackelberg} have studied related Major-Minor formulations in energy markets, as well as the Stackelberg mean field game version. Finally, the present setup postulated standard ``symmetric-type'' sub-game perfect Nash equilibria. Many more equilibria can be in principle envisioned, for example breaking the symmetry even when all operators are homogeneous.

\bibliographystyle{plainnat}

\appendix
\section{Appendix}
\subsection{Proofs for Section \ref{sec:HJB_sys}}\label{General Theorem Proof}
\begin{proof}[Proof of Theorem \ref{General Theorem}]
Fixing a Markovian control profile $\bar{\balpha} ^{-i} = [ \bar\alpha^1,\ldots,\bar \alpha^{i-1},\bar\alpha^{i+1},\ldots,\bar\alpha^N]$ of all other agents, agent $i$ faces a standard stochastic control problem, and her optimal cost is given by the best-response value function $ V^i(t,q,\bs; \bar{\balpha}^{-i})$ in \eqref{general_value_function}.  By dynamic programming principle (see, e.g. \citep{pham2009continuous}), her specific value function  $ V^i(t,q,\bs; \bar{\balpha}^{-i})$  then satisfies the HJB equation \eqref{general HJBE-PDE} with $\hat\alpha^j$ replaced by $\bar \alpha^j$, $j \neq i$. The respective best-response feedback control for agent $i$ is then obtained by minimizing the Hamiltonian which depends on $ V^i(t,q,\bs; \bar{\balpha}^{-i})$ and $\bar\balpha^{-i}$. 

At a Markovian Nash equilibrium $\hat{\balpha}$, each component $\hat\alpha^i$ is a best response to $\hat{\balpha}^{-i}$ for $i=1,2,\ldots,N$. Aggregating, the collection of best-response value functions $\{V^i(t,q,\bs; \bar{\balpha}^{-i})\}_{i=1}^N$ solves the coupled HJB system \eqref{general HJBE-PDE}. Define the Hamiltonian of agent $i$, $H(\alpha^i;\balpha^{-i})$ 
\begin{equation*}
    H(\alpha^i; \balpha^{-i}) := \alpha^i \big( \bar{P}^i -  c_1^i (q - \bw_{i}^{\top} \balpha ) \big) 
    + c_2^i (\alpha^i)^2 + \partial_{s^i} V^i \cdot \alpha^i,
\end{equation*}
which summarizes the dependence of agent $i$'s Hamiltonian on its control $\alpha^i$. Differentiating with respect to $\alpha^i$ and  setting $\frac{\partial H}{\partial \alpha^i} =0$ gives the first-order condition 
\begin{equation}\label{eq:alpha-foc}
   \hat \alpha^i  + \frac{ c_1^i \bw_i^\top \balpha }{c_1^i w_{ii} + 2 c_2^i}  = \frac{ c_1^i q}{c_1^i w_{ii} + 2 c_2^i} 
- \frac{\partial_{s^i} V^i}{c_1^i w_{ii} + 2 c_2^i} 
- \frac{\bar{P}^i}{c_1^i w_{ii} + 2 c_2^i}.
\end{equation}
Note that the denominators $(c_1^i w_{ii} + 2 c_2^i)$ are precisely the $i$-th entry of $\bd$ in \eqref{d,U,u}. At a Nash equilibrium $\hat\balpha$, the above condition must hold simultaneously for each $i = 1, 2, \ldots, N$. Assuming that $ \bI_N +  \diag \bd ^{-1}\diag {\bc_1}\bW$ is invertible and vectorizing \eqref{eq:alpha-foc}, the equilibrium control profile $\hat\balpha$ is therefore characterized by the linear system
\begin{equation*}
   (\bI_N +  \diag \bd ^{-1}\diag {\bc_1}\bW)\hat\balpha = \diag \bd ^{-1} (\bc_1 q -[\partial_{s^1}V^1,\ldots,\partial_{s^N}V^N]^{\top} -\ubarP).
\end{equation*}
This yields the representation \eqref{opt_alpha} for agent $i$'s equilibrium control  $\hat \alpha^i$ (recall the definition of $\bM$ in \eqref{d,U,u}).

We next derive the ODE system~\eqref{general_ode_sys}--\eqref{general_terminal} that characterizes the game value $V^i(t, q, \bs)$ of agent $i$. Substituting \eqref{opt_alpha} into equation \eqref{general HJBE-PDE}, the HJB equation for agent $i$ becomes
\begin{align} \label{general_HJBE}
- \partial_t V^i =\
&\underbrace{\bar{P}^i\, \hat\alpha^i}_{\footnotesize\textcircled{0}}
-\underbrace{ c_1^i \hat\alpha^i
\big(q- \bw_{i}^\top\hat\balpha\big) }_{\footnotesize\textcircled{1}} +\underbrace{c_2^i (\hat\alpha^i)^2}_{\footnotesize\textcircled{2}} + \underbrace{c_3^i (s^i - \zeta^i(t))^2}_{\footnotesize\textcircled{3}} 
+ \underbrace{\partial_q V^i \cdot \kappa(\theta(t) - q)}_{\footnotesize\textcircled{4}} \nonumber\\
&+ \underbrace{\sum_{j=1}^N \partial_{s^j} V^i 
\big(\hat\alpha^j + a^j(t) + b^j(t) q\big)}_{\footnotesize\textcircled{5}} + \underbrace{\frac{1}{2}\mathrm{Tr}(\Sigma\Sigma^\top \partial_{q,\bs}^2V^i)}_{\footnotesize\textcircled{6}}.
\end{align}
We adopt the quadratic ansatz \eqref{ansatz}
and compute the partial derivatives (with $\dot{p}(t)$ generically denoting the time-derivative of the function $p(t)$)
\begin{align*}
    \partial_t V^i(t,q,\bs) &=  \dot{p}^{i}_{0}(t) q^2 + 2(\dot{\bp}^i(t))^\top \bs q + \bs^\top \dot{\bP}^i(t) \bs + \dot{r}_0^i(t) q + (\dot{\br}^i(t))^\top \bs + \dot{u}^i(t), \\
\partial_q V^i(t,q,\bs) &= 2 p^{i}_{0}(t) q + 2 \bp^i(t)^\top  \bs + r_0^i(t),\quad
\partial_{\bs} V^i(t,q,\bs) = 2 \bp^i(t)q + 2  \bP^i(t) \bs + \br^i(t),\\ 
\partial_{qq} V^i(t,q,\bs) &= 2 p^{i}_{0}(t),\quad
\partial_{q\bs} V^i(t,q,\bs) = 2 \bp^i(t), \quad
\partial_{\bs \bs} V^i(t,q,\bs) = 2  \bP^i(t).
\end{align*}
To alleviate notation, we introduce the intermediate quantities in \eqref{intermediates}. Then under the quadratic ansatz, agent $i$'s optimal control $\hat \alpha^i$ takes the affine form
\begin{equation*}
   \hat \alpha^i = k_1^i(t)\,q - \bk_2^i\,\bs - k_3^i(t),
\end{equation*}
and the interaction term in \textcircled{1}, $q-\bw_{i}^\top\balpha$, becomes 
\begin{equation*}
    q-\bw_{i}^\top\balpha = k_4^{i}(t)\,q + \bk_5^{i}(t)\,\bs + k_6^{i}(t).
\end{equation*}
Substituting the ansatz and these expressions into \eqref{general_HJBE}, we obtain seven labeled terms whose explicit forms are listed below:
\begin{align*}
\textcircled{0}
   &= \bar{P}^i \big(k_1^{i}(t)\,q -\bk_2^i(t)\,\bs - k_3^i(t)\big),\nonumber \\
    \textcircled{1} 
&=  -c_1^i\,k_1^i(t)k_4^i(t)\,q^2 
- c_1^i \big(k_1^i(t)\bk_5^i(t)-\bk_2^i(t)k_4^i(t)\big)\, \bs q + \bs^\top c_1^i \bk_2^i(t)^{\top}\bk_5^i(t)\, \bs 
 \\
&\quad + c_1^i \big( k_3^i(t)k_4^i(t)-k_1^i(t)k_6^i(t) \big)\, q + c_1^i  \big(\bk_2^i(t)k_6^i(t)+k_3^i(t)\bk_5^i(t) \big)\, \bs + c_1^i k_3^i(t)k_6^i(t) , \\
\textcircled{2}
&= c_2^i \big(k_1^i(t)\big)^2 q^2 
- 2 c_2^i k_1^i(t)\bk_2^i(t)\, \bs q + \bs^\top c_2^i\,\bk_2^i(t)^{\top}\bk_2^i(t)\, \bs 
 - 2 c_2^i k_1^i(t)k_3^i(t)\, q  + 2 c_2^i \bk_2^i(t)k_3^i(t) \bs \\
&\quad + c_2^i \big(k_3^i(t)\big)^2, \\
\textcircled{3} &= \bs^\top c_3^i \be_i \be_i^\top \bs 
- 2 c_3^i \zeta^i(t)\be_i^\top \bs 
+ c_3^i (\zeta^i(t))^2, \\
\textcircled{4} 
&= -2 \kappa \, p^{i}_{0}(t) \, q^2 
- 2 \kappa \, \bp^i(t)^\top \bs \, q
 + \kappa \big( 2 \theta(t) \, p^{i}_{0}(t) - r_0^i(t) \big) q + 2 \kappa \, \theta(t) \, \bp^i(t)^\top \bs 
+ \kappa \, r_0^i(t) \, \theta(t), \\
\textcircled{5} 
&=  2 {\bp^i(t)}^\top (\bk_1(t)+\bb(t)) \, q^2 
+ 2 \big( (\bk_1(t)+\bb(t))^\top  \bP^i(t) - {\bp^i(t)}^\top \bK_2(t) \big) \bs q - 2 \bs^\top { \bP^i(t)}^\top \bK_2(t) \bs \\ &\quad + \big( {\br^i(t)}^\top (\bk_1(t)+\bb(t)) - 2 {\bp^i(t)}^\top (\bk_3(t)-\ba(t)) \big) q 
- \big( {\br^i(t)}^\top \bK_2(t) + 2 (\bk_3(t)-\ba(t))^\top  \bP^i(t) \big) \bs \\
&\quad
- {\br^i(t)}^\top (\bk_3(t)-\ba(t)),  \\
\textcircled{6} 
&= \mathrm{Tr}\Big(\Sigma\Sigma^\top \big[\begin{smallmatrix}
p^{i}_{0}(t) & \bp^i(t)^\top \\
\bp^i(t) &  \bP^i(t)
\end{smallmatrix}\big]\Big).
\end{align*}
Collecting terms according to their polynomial degrees in $(q,\bs)$,
namely $q^2$, $\bs q$, $\bs^{\top}(\cdot)\bs$, $q$, $\bs$, and constants,
and matching coefficients on both sides of \eqref{general_HJBE},
we obtain the coupled ODE system \eqref{general_ode_sys}.

It remains to enforce the terminal condition. The terminal cost is given explicitly as
\begin{equation*}
V^i(T, q, \bs) 
= \bs^\top c_4^i \be_i \be_i^\top \bs 
- 2 c_4^i \zeta^i(T) \be_i^\top \bs 
+ c_4^i (\zeta^i(T))^2,
\end{equation*}
whereas evaluating the ansatz at $t=T$ gives
\begin{equation*}
V^i(T, q, \bs) =  p^{i}_{0}(T) q^2
+ 2 \bp^i(T)^\top \bs q 
+ \bs^\top  \bP^i(T) \bs
 + r_0^i(T) q 
+ \br^i(T)^\top \bs 
+ u^i(T).
\end{equation*}
Equating these two expressions yields \eqref{general_terminal}.
\end{proof}

\begin{proof}[Proof of Proposition \ref{general_wp}]\label{hetero_wp}

The coupled ODE system \eqref{general_ode_sys} has a triangular structure. Note that $\bP^i$, $i=1,\ldots,N$ form a closed Riccati system. After applying time reversal on $[0,T]$, this becomes initial-value problem. Once the existence solution of $\bP^i$ has been established on some interval in $[0,T]$, the equations for $p_0^i, \bp^i, r_0^i,$ and $u^i$ form an affine linear system with coefficients depending on $\bP^i$, and classical ODE theory yields existence and uniqueness on the same interval. Hence, the well-posedness of the full coefficient system reduces to that of the Riccati system for $\bP^i$, $i=1,\ldots,N$.

We introduce the block-diagonal matrix $\calP(t) := \text{diag}(\bP^1(t), \ldots, \bP^N(t)) \in \bbR^{N^2 \times N^2} $, whose $i$-th diagonal block is $\bP^i(t)$. By the ODE of $\bP^i(t)$ in \eqref{general_ode_sys}, we have 
\begin{equation}\label{calP_ode}
    -\dot \calP(t) = \calC_3 - \calP(t)\calJ\calS\calP(t)\calJ - \calJ\calP(t)\calS^\top\calJ\calP(t) +\calG(\calP(t)),\quad \calP(T) = \calC_4,
\end{equation}
where $\calJ = \bJ_N \otimes \bI_N$, $\calG(\calP) = \text{diag}(\sum_{j,l=1}^N \bP^j \bB^1\bP^l,\ldots,\sum_{j,l=1}^N \bP^j \bB^N\bP^l)$, $\calC_3$, $\calS$, $\{\bB^i\}_{i=1}^N$,and $\calC_4$ defined through \eqref{QSK_0} and \eqref{B_entries}.
 Following the idea of \citep{papavassilopoulos2003existence}, we first set $\bar\calP(t) = \calP(T-t)$, and we can convert \eqref{calP_ode} to the initial value problem on $[0,T]$ 
 \begin{equation}\label{cal_barP_ode}
     \dot{\bar{\calP}}(t) = \calC_3 - \bar\calP(t)\calJ\calS\bar\calP(t)\calJ - \calJ\bar\calP(t)\calS^\top\calJ\bar\calP(t) +\calG(\bar\calP(t)),\quad \bar\calP(0) = \calC_4,
 \end{equation}
 Introducing any matrix induced norm $\|\cdot\|$,  we can show that $\|\calP(t)\| = \max_{1\leq i \leq N}\|\bP^i(t)\|$ and also 
 \begin{align*}
     \|\calG(\calP(t))\|  = \max_{1\leq i\leq N}\| \sum_{j,l=1}^N \bP^j \bB^i\bP^l\|\leq N^2 \max_{1\leq i\leq N}\|\bB^i\|\|\calP\|^2.
 \end{align*}
 Then if $\bar\calP(t)$ is a solution of \eqref{cal_barP_ode} on $[0,t')$ where $t'\leq T$, then
 \begin{align}
     \|\dot{\bar{\calP}}(t)\| \leq N^2\beta \|\bar\calP(t)\|^2 + \|\calC_3\|,
 \end{align}
 where $\beta = \|\calS\|+\|\calS^\top\| + \max_{1\leq i\leq N}\| \bB^{i} \|$. By the third case of the Proposition in \citep{papavassilopoulos2003existence}, a sufficient condition of existence of the solution is that if $T < \frac{1}{2N\sqrt{\|\calC_3\|\beta}}\big[\pi - 2\tan^{-1}\big(\frac{N\beta\|\calC_4\|}{\sqrt{\|\calC_3\|\beta}}\big)\big]$, then the solution of \eqref{calP_ode} exists and moreover,
 \begin{equation*}
     \|\calP(t)\| \leq \frac{\sqrt{\|\calC_3\|\beta}}{N\beta}\tan\Big(N\sqrt{\|\calC_3\|\beta} \cdot (T-t)+\tan^{-1}\big(\frac{N\beta\|\calC_4\|}{\sqrt{\|\calC_3\|\beta}}\big)\Big), \quad t \in[0,T].
 \end{equation*}
 The uniqueness of the solution is a direct consequence of the local Lipschitz property of the vector field induced by the linear–quadratic structure of the ODE system. On a compact time interval, standard existence–uniqueness results for ODEs therefore imply that, once existence is established, the solution is necessarily unique.
\end{proof}

\begin{proof}[Proof of Theorem \ref{Homo_case}]
In the homogeneous-agent setting \eqref{eq:homo-condition}, the value functions and equilibrium controls are permutation invariant. More precisely, for any permutation \( \pi: \{1, \dots, N\} \to \{1, \dots, N\} \), the value function $\tilde V^i$ and the equilibrium control $\hat\alpha^i$ satisfy:
\[
\tilde V^i(t, q,\bs) = \tilde V^{\pi(i)}(t, q,\pi(\bs)), \quad \hat\alpha^i(t, q,\bs) = \hat\alpha^{\pi(i)}(t, q,\pi(\bs)),
\]
where \( \bs = ( s^1, \dots, s^N) \in \bbR^{N} \), and \( \pi(\bs) \) denotes the permuted vector $\pi(\bs) = (s^{\pi(1)}, \ldots, s^{\pi(N)})^\top$. Furthermore, fixing an index $i$ and splitting $\bs$ as $\bs = [s^i,\bs^{-i}]$, the symmetry implies that the remaining components $\bs^{-i}$ enter only through permutation-invariant combinations. Hence, for any permutation
\(\pi'\) acting on the indices $\{1, \ldots, N\} \setminus \{i\}$, we have 
\[
\tilde V^i(t, q, s^i, \bs^{-i}) = \tilde V^i(t, q, s^i, \pi'(\bs^{-i})), \quad
\hat\alpha^{i}(t, q, s^i, \bs^{-i}) = \hat\alpha^{i}(t, q, s^i, \pi'(\bs^{-i})),
\]
where
\[
\pi'(\bs^{-i}) := \big( s^{\pi'(1)}, \dots, s^{\pi'(i-1)}, s^{\pi'(i+1)}, \dots, s^{\pi'(N)} \big)^\top.
\]
This permutation symmetry motivates the reduced quadratic ansatz \eqref{homoHJBE}, which is a symmetric specialization of the general quadratic form \eqref{ansatz}.

We now compute the derivatives of $\tilde V^i$ given by \eqref{homoHJBE}. A direct calculation yields
\begin{equation*}
\begin{aligned}
    \partial_t \tilde V^i &= 
\dot{\tilde p}_1(t)\, q^2 + 2 \dot{\tilde p}_2(t)\, s^i q + 2 \dot{\tilde p}_3(t)\, \mathbf{1}_{N-1}^\top \bs^{-i} q + \dot{\tilde p}_4(t)\, (s^i)^2 
+ 2 \dot{\tilde p}_5(t)\, \mathbf{1}_{N-1}^\top s^i \bs^{-i} \\
&\quad + (\bs^{-i})^\top \dot{B}(t)\, \bs^{-i} + \dot{\tilde r}_1(t)\, q + \dot{\tilde r}_2(t)\, s^i + \dot{\tilde r}_3(t)\, \mathbf{1}_{N-1}^\top \bs^{-i} + \dot{\tilde{u}}(t), \\
\partial_q \tilde V^i &= 2 \tilde p_1(t)\, q + 2 \tilde p_2(t)\, s^i + 2 \tilde p_3(t)\, \mathbf{1}_{N-1}^\top \bs^{-i} + \tilde r_1(t), \\
\partial_{s^i} \tilde V^i &= 2 \tilde p_2(t)\, q + 2 \tilde p_4(t)\, s^i + 2 \tilde p_5(t)\, \mathbf{1}_{N-1}^\top \bs^{-i} + \tilde r_2(t), \\
\partial_{\bs^{-i}} \tilde V^i &= 2 \tilde p_3(t)\, \mathbf{1}_{N-1} q + 2 \tilde p_5(t)\, \mathbf{1}_{N-1} s^i + 2 ((\tilde p_7(t) - \tilde p_6(t)) \bI_{N-1} + \tilde p_6(t)\, \bJ_{N-1})\, \bs^{-i} \\ &\quad + \tilde r_3(t)\, \mathbf{1}_{N-1}, \\
\partial_{qq} \tilde V^i &= 2 \tilde p_1(t),\,
\partial_{s^i s^i} \tilde V^i = 2 \tilde p_4(t),\,
\partial_{\bs^{-i} \bs^{-i}} \tilde V^i = 2 ((\tilde p_7(t) - \tilde p_6(t)) \bI_{N-1} + \tilde p_6(t)\, \bJ_{N-1}).
\end{aligned}
\end{equation*}

Next, substituting $\bw = \mathbf{1}_N$ into \eqref{simp_M} and then inserting the resulting $\bM$ into equation \eqref{opt_alpha}, we obtain the equilibrium control in the homogeneous case:
\begin{align}\label{homo_opt}
     \hat{\alpha}_i &=  \frac{c_1 \big(q + \frac{1}{d} \sum_{k=1}^N \partial_{s^k} \tilde V^k + \eta_{0} \big)}{d\eta_{1}}- \frac{\partial_{s^i} \tilde V^i}{d} - \frac{\bar{P}}{d}.
\end{align}

Substituting \eqref{homo_opt} into \eqref{general HJBE-PDE}, the equation expands into the following labeled components:
\begin{align}
-\partial_t \tilde V^i
=\;&
\underbrace{
\bar{P}\Big(
\frac{c_1\big(q+\frac{1}{d}\sum_{k=1}^N \partial_{s^k}\tilde V^k+\eta_{0}\big)}{d\eta_{1}}
-\frac{\partial_{s^i}\tilde V^i}{d}
-\frac{\bar{P}}{d}
\Big)
}_{\footnotesize\textcircled{0}}
\nonumber\\
&-
\underbrace{
c_1\Big(
\frac{c_1\big(q+\frac{1}{d}\sum_{k=1}^N \partial_{s^k}\tilde V^k+\eta_{0}\big)}{d\eta_{1}}
-\frac{\partial_{s^i}\tilde V^i}{d}
-\frac{\bar{P}}{d}
\Big)
\Big(
\frac{q+\frac{1}{d}\sum_{k=1}^N \partial_{s^k}\tilde V^k+\eta_{0}}{\eta_{1}}
\Big)
}_{\footnotesize\textcircled{1}}
\nonumber\\
&+
\underbrace{
c_2\Big(
\frac{c_1\big(q+\frac{1}{d}\sum_{k=1}^N \partial_{s^k}\tilde V^k+\eta_{0}\big)}{d\eta_{1}}
-\frac{\partial_{s^i}\tilde V^i}{d}
-\frac{\bar{P}}{d}
\Big)^2
}_{\footnotesize\textcircled{2}} 
+
\underbrace{
c_3 (s^i-\zeta(t))^2
}_{\footnotesize\textcircled{3}}
+
\underbrace{
\partial_q \tilde V^i \,\kappa(\theta(t)-q)
}_{\footnotesize\textcircled{4}}
\nonumber\\
&+
\underbrace{
\sum_{j=1}^N \partial_{s^j}\tilde V^i
\Big(
\frac{c_1\big(q+\frac{1}{d}\sum_{k=1}^N \partial_{s^k}\tilde V^k+\eta_{0}\big)}{d\eta_{1}}
-\frac{\partial_{s^j}V^j}{d}
-\frac{\bar{P}}{d}
+a(t) + b(t)q
\Big)
}_{\footnotesize\textcircled{5}}
\nonumber\\
&+ \underbrace{\frac{1}{2}\mathrm{Tr}(\Sigma\Sigma^\top \partial_{q,\bs}^2\tilde V^i)}_{\footnotesize\textcircled{6}}
\end{align}
To simplify the algebra, we introduce the intermediate quantities $\{g_1, \ldots, g_8, \bh_1, \bh_2, \bh_4, \bh_5, \bh_6, \bh_8 \in \bbR^N\}$, rewriting the control and interaction terms in the compact forms
\begin{align*}
    \frac{c_1 \big(q + \frac{1}{d} \sum_{k=1}^N \partial_{s^k} \tilde V^k + \eta_{0} \big)}{d\eta_{1}}- \frac{\partial_{s^i} \tilde V^i}{d} - \frac{\bar{P}}{d}  
     &= g_1(t)q + g_2(t) s^i + g_3(t) \, \mathbf{1}_{N-1}^\top \bs^{-i} 
+ g_4(t),\\
    \frac{q + \frac{1}{d} \sum_{k=1}^{N} \partial_{s^k} \tilde V^k + \eta_{0}}{\eta_{1}}
&= g_5(t) q + g_6(t) s^i + g_7(t) \, \mathbf{1}_{N-1}^\top \bs^{-i} + g_8(t).
\end{align*}
With the additionally defined $\bH_3, \bH_7 \in \bbR^{N \times (N-1)}$:
\begin{equation*}
     \bH_3(t) := \begin{bmatrix}
2\tilde p_5(t)\, \mathbf{1}_{N-1}^\top \\ 2((\tilde p_7(t) - \tilde p_6(t)) \bI_{N-1} + \tilde p_6(t)\, \bJ_{N-1})
\end{bmatrix}, \; 
\bH_7(t) := \begin{bmatrix}
g_3(t)\, \mathbf{1}_{N-1}^\top \\
(g_2(t) - g_3(t)) \bI_{N-1} + g_3(t)\, \bJ_{N-1} 
\end{bmatrix},
\end{equation*}
the interaction term \textcircled{5} can be written compactly as
\begin{align*}
   \textcircled{5} &= \big(
\bh_1(t) q +
\bh_2(t) s^i +
\bH_3(t)
\bs^{-i}
 +
\bh_4(t)
\big)^\top
\big(
\bh_5(t)
q +
\bh_6(t)
s^i +
\bH_7(t)
\bs^{-i}
 +
\bh_8(t)
\big).
\end{align*}

Finally, substituting these expressions into the HJB equation,  grouping terms according to the monomials in $(q, s, \bs^{-i})$ and matching the corresponding coefficients, we obtain a system of ODEs for the time-dependent coefficients $\tilde p_i(t)$, $\tilde r_i(t)$, and $\tilde u(t)$, which is precisely the system \eqref{homo_ode_system}.
\end{proof}

\begin{proof}[Proof of Proposition \ref{homo_existence}]
Observe that in \eqref{homo_ode_system}, the equations for $\{\tilde p_4(t),\tilde p_5(t),\tilde p_6(t),\tilde p_7(t)\}$ form an autonomous sub-system, while those for $\{\tilde p_1(t),\tilde p_2(t),\tilde p_3(t),\tilde r_1(t), \tilde r_2(t), \tilde r_3(t),\tilde u(t)\}$ are affine in $\{\tilde p_4(t),\tilde p_5(t),\tilde p_6(t),\tilde p_7(t)\}$ once the latter are solved. Moreover, 
\begin{equation*}
    \dot{\tilde p}_6(t) - \dot{\tilde p}_7(t) = (g_3(t)-g_2(t))(\tilde p_6(t)-\tilde p_7(t)), \quad \tilde p_6(T) - \tilde p_7(T) = 0,
\end{equation*}
which gives $\tilde p_6(t) = \tilde p_7(t)$. We conclude that the triple $\tilde \bp(t) = [\tilde p_4(t),\tilde p_5(t),\tilde p_6(t)]$ forms a closed Ricatti system. Reversing time via $\bar \bp(t) := \tilde \bp(T-t)$, the triple $\bar \bp(t)$ satisfies an initial-value ODE
\begin{equation} \label{bar_bp}
    \dot{\bar \bp}(t) = \begin{bmatrix}
        \bar \bp(t)^\top \calM_1 \bar\bp(t) \\
        \bar \bp(t)^\top \calM_2 \bar\bp(t) \\
        \bar \bp(t)^\top \calM_3 \bar\bp(t) 
    \end{bmatrix} + \begin{bmatrix}
        c_3 \\
        0   \\
        0
    \end{bmatrix}, \qquad \bar \bp(0) = \begin{bmatrix}
        c_4\\
        0 \\
        0\\
    \end{bmatrix},
\end{equation}
where the coefficient matrices $\calM_1$, $\calM_2$ and $\calM_3$ take the form
\begin{equation}\label{calM_matrix}
    \calM_1 = \begin{bmatrix}
        m^{(1)}_{44} & m^{(1)}_{45} & 0 \\
        m^{(1)}_{45}& m^{(1)}_{55}&0 \\
        0 & 0& 0
    \end{bmatrix},\quad
    \calM_2 = \begin{bmatrix}
        m^{(2)}_{44} & m^{(2)}_{45} & m^{(2)}_{46} \\
        m^{(2)}_{45}& m^{(2)}_{55}& m^{(2)}_{56}\\
        m^{(2)}_{46} & m^{(2)}_{56} & 0
    \end{bmatrix},\quad
    \calM_3 = \begin{bmatrix}
        m^{(3)}_{44} & m^{(3)}_{45} & m^{(3)}_{46} \\
       m^{(3)}_{45}& m^{(3)}_{55}& m^{(3)}_{56} \\
       m^{(3)}_{46} & m^{(3)}_{56} & 0
    \end{bmatrix}.
\end{equation}
A direct computation of these coefficients from the expression of $\tilde \bp(t)$ in equation \eqref{homo_ode_system} and intermediate quantities $\{g_2(t), g_3(t), g_6(t), g_7(t), \bh_2(t), \bh_6(t)\}$ gives
\begin{equation}\label{calM}
    \left\{
    \begin{aligned}
     m^{(1)}_{44} &= -\frac{4(c_1+c_2)}{d^2}+\frac{4c_1}{d^2\eta_1}+\frac{4c_1^2c_2}{d^4\eta_1^2},\quad
     m^{(1)}_{45} = (N-1)\big(\frac{6c_1^2+8c_1c_2}{d^3\eta_1}-\frac{4c_1^3+4c_1^2c_2}{d^4\eta_1^2}\big); \\
     m^{(1)}_{55} &= -(N-1)\frac{4(c_1+c_2)}{d^2\eta_1}-(N-1)^2\frac{2c_1^3+2c_1^2c_2}{d^4\eta_1^2}; \\
     m^{(2)}_{44} &= \frac{4c_1^2+4c_1c_2}{d^3\eta_1}-\frac{4c_1^3+4c_1^2c_2}{d^4\eta_1^2}, \quad
     m^{(2)}_{45} = -\frac{4(c_1+c_2)}{d^2\eta_1} -4(N-1)\frac{c_1^3+c_1^2c_2}{d^4\eta_1^2}; \\
     m^{(2)}_{55} &= -\frac{4N^2(c_1+c_2)c_1}{d^3\eta_1^2}+ \frac{4N(c_1+c_2)(3c_1^2+2c_1c_2-4c_2^2)}{d^4\eta_1^2}+\frac{4(c_1+c_2)(6c_1c_2+8c_2^2)}{d^4\eta_1^2}; \\ m^{(2)}_{46} &= \frac{(N-1)c_1}{d^2\eta_1},\quad
     m^{(2)}_{56} = -\frac{2(N-1)(c_1+c_2)}{d^2\eta_1}; \\
      m^{(3)}_{44} &= -\frac{4c_1^3+4c_1^2c_2}{d^4\eta_1^2},\quad m^{(3)}_{45} = \frac{8c_1(c_1+c_2)^2}{d^4\eta_1^2},\quad m^{(3)}_{55} = -\frac{16(c_1+c_2)^3}{d^4\eta_1^2}; \\
        m^{(3)}_{46} &= -\frac{4(c_1+c_2)}{d^2\eta_1},\quad m^{(3)}_{56} = -\frac{2N}{d\eta_1} + \frac{6c_1+8c_2}{d^2\eta_1}.
    \end{aligned}
    \right.
\end{equation}
Applying the norm $\|\cdot\|_2$ on both sides of equation \eqref{bar_bp} and using  \eqref{gamma} we have
\begin{equation*}
    \|\dot{\bar \bp}(t)\|_2 \leq \sqrt{\|\calM_1\|_2^2 + \|\calM_2\|_2^2 +\|\calM_3\|_2^2} \cdot \|\bar \bp(t)\|_2^2 + c_3 = \beta_{Hom} \|\bar \bp(t)\|_2^2 + c_3, \qquad \|\bar \bp(0)\|_2 = c_4.
\end{equation*}
 The upper bound on $T$ follows by the same argument as in the proof of Proposition \ref{general_wp}, with $N^2\beta$ replaced by $\beta_{Hom}$ and $\|\calC_3\|$ replaced by $c_3$.
\end{proof}

\begin{proof}[Proof of Proposition~\ref{parameters_prop}]
From the proof of Proposition \ref{homo_existence}, we have established that the existence of the solution is determined by the existence of the system $\bar{\bp}(t)$ defined by \eqref{bar_bp}. We will prove the result by analyzing the bounded simplex-like region
\begin{align}\label{invariant region}
\calI \equiv \calI_N^{c_1,c_2,c_3,c_4}
:=
\Big\{
(x_1,x_2,x_3)\in\bbR^3 \ \big|\
&0\le x_1\le \max\big\{c_4,\sqrt{\frac{c_3}{-\lambda_N(c_1,c_2)}}\big\},\\
 & 0\le x_2\le \frac{c_1}{2(c_1+c_2)}x_1,
-\frac{c_1}{Nc_1+2c_2}x_2\le x_3\le0 \nonumber
\Big\},
\end{align}
where
\[
\lambda_N(c_1,c_2)
=
-\frac{1}{c_1+c_2}
-
\frac{4Nc_1^3}{(c_1+2c_2)^2((N+1)c_1+2c_2)^2}.
\]

We show that if $\bar\bp(0)\in \calI$ then $\bar\bp(t') = [\bar p_4(t'), \bar p_5(t'),\bar p_6(t')]^\top \in \calI$ for all $t' > 0$. To wit, it is enough to verify that the vector field $\dot{\bar\bp}$ is inward-pointing or tangent on each boundary face of the domain $\calI$. 
Geometrically, the latter boundary consists of the \textit{vertex} $(0,0,0)$, one \textit{line segment} $\{(x_1,0,0):0\le x_1\le \max\big\{c_4,\sqrt{\frac{c_3}{-\lambda_N(c_1,c_2)}}\big\}\}$, and four two-dimensional faces:  \textit{upper $x_1$-face} $x_1=\max\big\{c_4,\sqrt{\frac{c_3}{-\lambda_N(c_1,c_2)}}\big\}$, \textit{upper $x_2$-face} $x_2=\frac{c_1}{2(c_1+c_2)}x_1$, \textit{upper $x_3$-face} $x_3=0$, and \textit{lower $x_3$-face} $x_3=-\frac{c_1}{Nc_1+2c_2}x_2$. 
We verify each of these six boundaries utilizing equations \eqref{bar_bp}-\eqref{calM} which reduces to analyzing the gradient with respect to the coordinate(s)  corresponding to the boundary face under consideration; the remaining coordinates are either not constrained by this face or correspond to one of the other five boundary faces.

\medskip
\noindent
\textbf{Case 1: Vertex $(0,0,0)$.} This corresponds to $\bar p_4(t)=\bar p_5(t)=\bar p_6(t)=0$. Thus, at the vertex,
\begin{equation*}
    \dot{\bar p}_4(t)=c_3 \geq 0,
\end{equation*}
which is inward-pointing or tangent w.r.t.~$\calI$.
Hence the vector field has nonnegative component in the $x_1$-direction. The $x_2$-direction corresponding to $\bar p_5(t)$ and $x_3$-direction $\bar p_6(t)$ are treated in Case 2 and Case 5, respectively.

\medskip
\noindent
\textbf{Case 2: Line Segment $\big\{(x_1,0,0): 0 \le x_1 \le \max\big\{c_4,\sqrt{\frac{c_3}{-\lambda_N(c_1,c_2)}}\big\}\big\}$} corresponding to $\bar p_5(t)=0, \bar p_6(t)=0$. 
 On this line segment,
\begin{equation*}
    \dot{\bar p}_5(t)=m^{(2)}_{44}\bar p_4(t)^2, \qquad\text{where  }\quad m^{(2)}_{44}
=
\frac{4c_1(c_1+c_2)(Nc_1+2c_2)}{d^4\eta_1^2}>0.
\end{equation*}
Therefore, $\dot{\bar p}_5(t)\ge0$. The $x_1$-direction corresponding to $\bar p_4(t)$ is controlled at the endpoints of the line segment, which correspond to Case 1 and Case 3. The $x_3$-direction corresponding to $\bar p_6(t)$ is treated in Case 5. Thus, the vector field is inward-pointing or tangent on the line segment. 

\medskip
\noindent
\textbf{Case 3: Upper $x_1$-face $\bar p_4(t)=\max\big\{c_4,\sqrt{\frac{c_3}{-\lambda_N(c_1,c_2)}}\big\}$.} 
Let
\begin{equation}\label{xi}
    \xi=\frac{\bar p_5(t)}{\bar p_4(t)}.
\end{equation}
Then
\begin{equation*}
    \dot{\bar p}_4(t)
=
\bar p_4(t)^2\big(m^{(1)}_{44}+2m^{(1)}_{45}\xi+m^{(1)}_{55}\xi^2\big)+c_3.
\end{equation*}
The maximum of the quadratic-in-$\xi$ expression multiplying $\bar{p}_4(t)^2$ is $\lambda_N(c_1,c_2)$. Since by the definition of $\calI$, $0\le \xi\le \frac{c_1}{2(c_1+c_2)}$, we obtain
\begin{equation*}
    \dot{\bar p}_4(t)
\le
\big(\max\big\{c_4,\sqrt{\frac{c_3}{-\lambda_N(c_1,c_2)}}\big\}\big)^2\lambda_N(c_1,c_2)+c_3\le 0.
\end{equation*}
For the remaining coordinates: the $x_2$-direction corresponding to $\bar p_5(t)$ on the endpoints are treated by Case 2 and Case 4, while $x_3$-direction corresponding to $\bar p_6(t)$ on the endpoints are treated by Case 5 and Case 6.
Hence the vector field is inward-pointing or tangent on this boundary.

\medskip
\noindent
\textbf{Case 4: Upper $x_2$-face $x_2=\frac{c_1}{2(c_1+c_2)}x_1$.} This corresponds to $\bar p_5(t)=\frac{c_1}{2(c_1+c_2)}\bar p_4(t)$. Differentiating $F_{4,5}(t) :=\frac{c_1}{2(c_1+c_2)}\bar p_4(t) - \bar p_5(t)$ gives
\begin{equation*}
\dot{F}_{4,5}(t)
= \frac{c_1}{2(c_1+c_2)}\Big(c_3
+
\frac{c_1^2}{c_1+c_2}
\frac{
(N^2-4N-1)c_1^2+(2N^2-2N-4)c_1c_2+4(N-1)c_2^2
}{d^4\eta_1^2}\Big)\bar p_4(t)^2.
\end{equation*}
It follows that for $N\ge5$, $\dot{F}_{4,5}(t) \ge 0$ is inward-pointing or tangent. For the remaining coordinates: the $x_1$-direction corresponding to $\bar p_4(t)$ on the endpoints are treated by Case 1 and Case 3, while $x_3$-direction corresponding to $\bar p_6(t)$ on the endpoints are treated by Case 5 and Case 6.
Hence the vector field is inward-pointing or tangent on this boundary.

\medskip
\noindent
\textbf{Case 5: Upper $x_3$-face $x_3=0$.}  This corresponds to $\bar p_6(t)=0$. By equation \eqref{xi}
\begin{equation*}
\dot{\bar p}_6(t) =
\bar p_4(t)^2\big(m^{(3)}_{44}+2m^{(3)}_{45}\xi+m^{(3)}_{55}\xi^2\big)\\
=
-\frac{4(c_1+c_2)\big(2(c_1+c_2)\xi-c_1\big)^2}{d^4\eta_1^2}\bar p_4(t)^2
\le0.
\end{equation*}
For the remaining coordinates: the $x_1$-direction corresponding to $\bar p_4(t)$ on the endpoints are treated by Case 1 and Case 3, while $x_2$-direction corresponding to $\bar p_5(t)$ on the endpoints are treated by Case 2 and Case 4.
Hence the vector field is inward-pointing or tangent on this boundary.

\medskip
\noindent
\textbf{Case 6: Lower $x_3$-face $x_3=-\frac{c_1}{Nc_1+2c_2}x_2$.} This corresponds to $\bar p_6(t)=-\frac{c_1}{Nc_1+2c_2}\bar p_5(t)$. Since by equation \eqref{xi} $0 \le \xi  \le \frac{c_1}{2(c_1+c_2)}$, let $\xi = \frac{\phi c_1}{2(c_1+c_2)}$, $0\le \phi \le 1$. Differentiating $F_{5,6}(t) := \bar p_6(t)+\frac{c_1}{Nc_1+2c_2}\bar p_5(t)$ gives
\begin{equation*}
\dot F_{5,6}(t) =
\frac{c_1^2\phi H_N(c_1,c_2,\phi)}
{(c_1+c_2)^2(c_1+2c_2)(Nc_1+2c_2)^2((N+1)c_1+2c_2)}
\bar p_4(t)^2,
\end{equation*}
where the numerator is
\begin{align*}
H_N(c_1,c_2,\phi)
&=\big(3N+1-(2N+1)\phi \big)c_1^3
+\big(7N+9+\phi(N^2-5N-7) \big)c_1^2c_2\\
&+(4N+16-12\phi)c_1c_2^2+(8-4\phi)c_2^3.
\end{align*}
For $N\ge5$, we have $H_N(c_1,c_2,\phi)\ge 0$ and therefore $\dot F_{5,6}(t) \ge0$ is inward-pointing or tangent. For the remaining coordinates: the $x_1$-direction corresponding to $\bar p_4(t)$ on the endpoints are treated by Case 1 and Case 3, while $x_2$-direction corresponding to $\bar p_5(t)$ on the endpoints are treated by Case 2 and Case 4.

Since the vector field on all boundary components is inward-pointing or tangent, if at $t\ge 0$, $\bar \bp(t) \in \calI$ then  $\bar{\bp}(t') \in \calI$ for all $t' > t$. In particular, since the initial condition is $\bar\bp(0)=[c_4,0,0]^\top\in \calI$, $\bar{\bp}(t) \in \calI$ for all $t \ge 0$, establishing boundedness and therefore global well-posedness on time interval $[0,T]$ for every $T>0$.
\end{proof}

\subsection{Proofs for Section \ref{sec:comparative}}\label{appen:comparative}

\begin{proof}[Proof of Proposition~\ref{prop:homo_mean_control}]
Under the homogeneous-agent setting, the state dynamics are permutation invariant, and the distribution of $\hat S_t^i$ and $\hat \alpha_t^i$, $i=1,2,\ldots,N$ are identical across agents. In particular, their expectations do not depend on $i$. 

From the equilibrium control in \eqref{homo_optimal control}, $
\hat\alpha_t^i
= g_1(t)Q_t + g_2(t)\hat S_t^i + g_3(t)\sum_{j\ne i}\hat S_t^j + g_4(t)$,
taking expectations on both sides and using exchangeability gives \eqref{mean_control}. Next, using the pricing relation $
\hat P_t^i=\bar P-c_1\big(Q_t-\sum_{j=1}^N \hat\alpha_t^j\big)$ 
and again using homogeneity, we obtain $
\E[\hat P_t^i]
= \bar P-c_1\big(m_Q(t)-N\E[\hat\alpha_t^i]\big)$, which yields \eqref{mean_price} after substituting \eqref{mean_control}.

To compute the mean $m_Q(t)$ and variance $V_Q(t)$ of $Q_t$, we first write the explicit solution of $Q_t$ from \eqref{Q process}
\begin{equation}\label{Q_sol}
    Q_t = e^{-\kappa t}Q_0 + \kappa\int_0^t e^{-\kappa(t-s)}\theta(s)\,ds + \sigma^0 \int_0^t e^{-\kappa(t-s)}\,dW_s^0.
\end{equation}
Since $Q_0$ is deterministic, taking expectation in \eqref{Q_sol} gives \eqref{mean_Q}, while It\^o isometry yields the well-known expression for $V_Q(t)$.

We now derive $m_S(t)$. Taking expectation on both sides of \eqref{S process} and substitute \eqref{mean_control} for $\E[\hat\alpha_t^i]$, we obtain an ODE for $m_S(t)$:
\begin{equation}\label{mean_S_ODE}
    \dot{m_S}(t) = (g_2(t)+(N-1)g_3(t))m_S(t)+(b(t)+g_1(t))m_Q(t) + a(t)+g_4(t), 
\end{equation}
with initial condition $m_S(0) = S_0$. Solving this linear ODE gives \eqref{mean_S} for $m_S(t)$.

We next study the second moments of the state processes. Recall the average equilibrium SOC process defined by $\bar S_t = \frac{1}{N}\sum_{i=1}^N \hat S_t^i$. Averaging \eqref{S process} over \(i\) and using
$
\bar\alpha_t:=\frac1N\sum_{i=1}^N \hat\alpha_t^i
= g_1(t)Q_t+\bigl(g_2(t)+(N-1)g_3(t)\bigr)\bar S_t+g_4(t)$, we obtain
\begin{align}\label{bar_S_SDE}
    d\bar S_t &= (a(t)+b(t)Q_t+\bar\alpha_t)\,dt + \sigma\rho dW_t^0 + \sigma\sqrt{1-\rho^2}\frac{1}{N} \sum_{j=1}^N d W_t^j \nonumber \\
    &= ((g_2(t)+(N-1)g_3(t))\bar S_t + (b(t)+g_1(t))Q_t+ a(t)+g_4(t))\,dt \nonumber\\ 
    &\quad + \sigma\rho dW_t^0 + \sigma\sqrt{1-\rho^2}\frac{1}{N} \sum_{j=1}^N d  W_t^j.
\end{align}

Then we are ready to derive $\Cov(Q_t,\bar S_t) := C_{Q \bar S}(t)$. By It\^o's formula,
\begin{align*}
    d(Q_t\bar S_t) &= Q_t\,d\bar S_t + \bar S_t\,dQ_t + d\langle Q,\bar S\rangle_t \\ 
    &= \Big[(g_2(t)+(N-1)g_3(t)-\kappa)Q_t\bar S_t +(b(t)+g_1(t)) Q_t^2 + (a(t)+g_4(t))Q(t) \\
    &\quad + \kappa\theta(t)\bar S_t + \sigma\sigma^0\rho\Big]\,dt + \bar S_t\sigma^0dW_t^0 + Q_t\big(\sigma\rho\,dW_t^0+\sigma\sqrt{1-\rho^2}\frac{1}{N}\sum_{j=1}^N d W_t^j\big).
\end{align*}
Taking expectations on both sides, writing $\E[Q_t\bar S_t] := m_{Q\bar S}(t)$, and use the ODEs satisfied by $m_Q(t)$, namely, $\dot{m}_Q(t) = \kappa(\theta(t)-m_Q(t))$, and by $m_{\bar S}(t)$ (cf. \eqref{mean_S_ODE}), we obtain
\begin{align*}
  \dot{C}_{Q\bar S}(t) &= \dot{m}_{Q\bar S}(t) -\dot{m}_Q(t)m_{\bar S}(t) - m_Q(t)\dot{m}_{\bar S}(t) \\
  &= (g_2(t) + (N-1)g_3(t)-\kappa)C_{Q\bar S}(t) + (b(t)+g_1(t)) V_Q(t) + \sigma\sigma^0\rho.
  \end{align*}
Solving this linear ODE yields \eqref{cov_Q_bar_S}. 

Next, to compute $\Var(\bar S_t)$, we apply It\^{o}'s formula to $\bar S_t^2$ yields
\begin{align*}
    d \bar S_t^2 &= 2\bar S_t\,d\bar S_t + d \langle\bar S,\bar S\rangle_t \\
    &= \Big[2\bar S_t((g_2(t) + (N-1)g_3(t))\bar S_t+(b(t)+g_1(t))Q_t+a(t)+g_4(t))\\ 
    &\quad +\sigma^2\rho^2+\frac{\sigma^2(1-\rho^2)}{N}\Big]\,dt + 2\bar S_t\big(\sigma\rho\,dW_t^0 + \sigma\sqrt{1-\rho^2}\frac{1}{N}\sum_{j=1}^N d W_t^j\big).
\end{align*}
 Let $\E[\bar S_t^2] = m_{\bar S^2}(t)$. Then
\begin{align*}
    \dot{V}_{\bar S}(t) &=  \dot{m}_{\bar S^2}(t)- 2\dot{m}_{\bar S}(t)m_{\bar S}(t)\\
    &= 2(g_2(t) + (N-1)g_3(t))V_{\bar S}(t) + 2 (b(t)+g_1(t)) C_{Q\bar S}(t) + \sigma^2\rho^2 + \frac{1-\rho^2}{N}\sigma^2.
\end{align*}
Solving this ODE together with the expression for $C_{Q\bar S}(t)$ yields \eqref{Var_bar_S}. 

Using the price identity $\hat P_t^i = \bar P - c_1\big(Q_t - \sum_{i=1}^N \hat\alpha_t^i\big) = \bar P - c_1 (1-Ng_1(t)Q_t) + c_1(g_2(t) + (N-1)g_3(t))N\bar S_t + c_1Ng_4(t)$, we express $\Var (\hat P_t^i)$ in terms of $V_Q(t)$, $V_{\bar S}(t)$ and $C_{Q\bar S}(t)$, which gives \eqref{var_price}.

To derive $\Var(\hat \alpha_t^i)$, we define the deviation process $\tilde S_t^i = \hat S_t^i - \bar S_t$. Subtracting \eqref{bar_S_SDE} from \eqref{S process} gives the SDE
\begin{equation}\label{tilde_S_SDE}
    d\tilde S_t^i = d \hat S_t^i - d\bar S_t =(g_2(t)-g_3(t))\tilde S_t^i\,dt + \sigma \sqrt{1-\rho^2}\big(d W_t^i - \frac{1}{N}\sum_{j=1}^N d W_t^j\big).
\end{equation}
Solving it explicitly gives $
    \tilde S_t^i = \sigma\sqrt{1-\rho^2}\int_0^t e^{\int_s^t (g_2(u)-g_3(u))\,du}\,\big( dW_s^i-\frac{1}{N}\sum_{j=1}^N  dW_s^j\big)$,
and hence 
\begin{equation}\label{Var_tilde_S}
    \Var(\tilde S_t^i) = \sigma^2(1-\rho^2)\big(1-\frac{1}{N}\big)\int_0^t e^{2\int_s^t (g_2(u)-g_3(u))\,du}\,ds. 
\end{equation}
This definition yields the nice decomposition $\hat\alpha_t^i = g_1(t) Q_t + (g_2(t) + (N-1)g_3(t))\bar S_t + g_4(t) + (g_2(t) - g_3(t)) \tilde S_t^i$. Moreover, $\Cov(Q_t,\tilde S_t^i)=0$, since $Q_t$ depends only on $W^0$ while $\tilde S_t^i$ is driven only by the idiosyncratic noises $W^1, \ldots, W^N$. Also, \(\Cov(\bar S_t,\tilde S_t^i)=0\): indeed, $\E[\bar S_t \tilde S_t^i]= \E[\tilde S_t^i]=0$ due to the symmetry of the agents. Therefore, using the previous results together with $\Cov(Q_t,\tilde S_t^i)=0$, $\Cov(\bar S_t,\tilde S_t^i)=0$,
we obtain \eqref{var_control}.
\end{proof}

\subsection{Proofs for Section \ref{sec:asymptotics}}\label{appen:asymptotics}

\begin{proof}[Proof of Proposition~\ref{p0_sol}]

To identify the leading-order coefficients and the first-order corrections of $\tilde p_i(t, \epsilon)$ and $\tilde r_i(t, \epsilon)$, we first rewrite the ODE system~\eqref{homo_ode_system} in terms of $\epsilon = 1/N$:
\begin{align}\label{homo_p_eps}
\begin{cases}
\begin{aligned}
-\dot{\tilde p}_1(t,\epsilon)
&= -c_1\, g_1(t,\epsilon)\, g_5(t,\epsilon)
+ c_2\, g_1^2(t,\epsilon)
- 2\kappa\, \tilde p_1(t,\epsilon)
+ 2\tilde p_2(t,\epsilon)g_1(t,\epsilon)  \\
&\quad
+ 2\Big(
\frac{(1 - \epsilon)c_{1}}{\epsilon d + c_{1}}
+ \frac{2(\epsilon-1)}{\epsilon d + c_{1}}\,\tilde p_2(t,\epsilon)
\Big) \tilde p_3(t,\epsilon)
+ 2\tilde p_2(t,\epsilon)b(t)
+ \frac{2(1-\epsilon)}{\epsilon}\tilde p_3(t,\epsilon)b(t),
\\
-2\dot{\tilde p}_2(t,\epsilon)
&= -c_1 \Big(
g_1(t,\epsilon)\, g_6(t,\epsilon)
+ g_2(t,\epsilon)\, g_5(t,\epsilon)
\Big)
+ 2c_2\, g_1(t,\epsilon)\, g_2(t,\epsilon)
- 2\kappa\, \tilde p_2(t,\epsilon) \\ 
&\quad
+ 2\tilde p_2(t,\epsilon)g_2(t,\epsilon)
+ \frac{2(1 - \epsilon) c_{1}}{\epsilon d^{2} + d c_{1}}\,
  \tilde p_3(t,\epsilon)\tilde p_4(t,\epsilon)
- \frac{2(1 - \epsilon)(c_{1} + d)}{\epsilon d^{2} + d c_{1}}\,
  \tilde p_3(t,\epsilon)\tilde p_5(t,\epsilon)\\
&\quad
+ 2 \tilde p_4(t,\epsilon)g_1(t,\epsilon)
+ 2\Big(
\frac{(1 - \epsilon)c_{1}}{\epsilon d + c_{1}}
+ \frac{2(\epsilon-1)}{\epsilon d + c_{1}}\,\tilde p_2(t,\epsilon)
\Big) \tilde p_5(t,\epsilon)
+ 2\tilde p_4(t,\epsilon)b(t)
\\
&\quad+ \frac{2(1-\epsilon)}{\epsilon}\tilde p_5(t,\epsilon)b(t),
\\
-2\dot{\tilde p}_3(t,\epsilon)
&= -c_1 \Big(
g_1(t,\epsilon)\, g_7(t,\epsilon)
+ g_3(t,\epsilon)\, g_5(t,\epsilon)
\Big)
+ 2c_2\, g_1(t,\epsilon)\, g_3(t,\epsilon)
- 2\kappa\, \tilde p_3(t,\epsilon) \\
&\quad
+ \Big(
2\tilde p_2(t,\epsilon)\, g_3(t,\epsilon)
+ 2\tilde p_3(t,\epsilon)\, g_2(t,\epsilon)
\Big)
 \\
&\quad + 2\Big(
\frac{2(1 - 2\epsilon) c_{1}}{\epsilon d^{2} + d c_{1}}\, \tilde p_4(t,\epsilon)
- \frac{2(1 - 2\epsilon)(c_{1} + d)}{\epsilon d^{2} + d c_{1}}\, \tilde p_5(t,\epsilon)
\Big)\tilde p_3(t,\epsilon) \\
&\quad
+ \Big(
2g_1(t,\epsilon)\, \tilde p_5(t,\epsilon)
+ 2g_1(t,\epsilon)\, \tilde p_7(t,\epsilon)
\Big) \\
&\quad
+ 2\Big(
\frac{(1 - 2\epsilon)c_{1}}{\epsilon d + c_{1}}
- \frac{2(1 - 2\epsilon)}{\epsilon d + c_{1}}\, \tilde p_2(t,\epsilon)
\Big)\tilde p_6(t,\epsilon) \\
&\quad
+ 2\tilde p_5(t,\epsilon)b(t)
+ 2\tilde p_7(t,\epsilon)b(t)
+ 2(N-2)\tilde p_6(t,\epsilon)b(t),
\\
-\dot{\tilde p}_4(t,\epsilon)
&= -c_1\, g_2(t,\epsilon)\, g_6(t,\epsilon)
+ c_2\, g_2^2(t,\epsilon)
+ c_3
+ 2\tilde p_4(t,\epsilon)g_2(t,\epsilon) \\
&\quad
+ 2\Big(
\frac{2(1 - \epsilon) c_{1}}{\epsilon d^{2} + d c_{1}}\, \tilde p_4(t,\epsilon)
- \frac{2(1 - \epsilon)(c_{1} + d)}{\epsilon d^{2} + d c_{1}}\, \tilde p_5(t,\epsilon)
\Big)\,\tilde p_5(t,\epsilon),
\\
-2\dot{\tilde p}_5(t,\epsilon)
&= -c_1 \Big(
g_2(t,\epsilon)\, g_7(t,\epsilon)
+ g_3(t,\epsilon)\, g_6(t,\epsilon)
\Big)
+ 2c_2\, g_2(t,\epsilon)\, g_3(t,\epsilon)\\
&\quad
+ \Big(
2\tilde p_4(t,\epsilon)\, g_3(t,\epsilon)
+ 2\tilde p_5(t,\epsilon)\, g_2(t,\epsilon)
\Big) \\
&\quad
+ 2 \Big(
\frac{2(1 - 2\epsilon) c_{1}}{\epsilon d^{2} + d c_{1}}\, \tilde p_4(t,\epsilon)
- \frac{2(1 - 2\epsilon)(c_{1} + d)}{\epsilon d^{2} + d c_{1}}\, \tilde p_5(t,\epsilon)
\Big)\tilde p_5(t,\epsilon)
 \\
&\quad + \Big(
2g_2(t,\epsilon)\, \tilde p_5(t,\epsilon)
+ 2g_3(t,\epsilon)\, \tilde p_7(t,\epsilon)
\Big)  \\
&\quad
+ 2\Big(
\frac{2(1 - 2\epsilon) c_{1}}{\epsilon d^{2} + d c_{1}}\, \tilde p_4(t,\epsilon)
- \frac{2(1 - 2\epsilon)(c_{1} + d)}{\epsilon d^{2} + d c_{1}}\, \tilde p_5(t,\epsilon)
\Big) \tilde p_6(t,\epsilon),
\\
-\dot{\tilde p}_6(t,\epsilon)
&= -c_1\, g_3(t,\epsilon)\, g_7(t,\epsilon)
+ c_2\, g_3^2(t,\epsilon)
+ 2\tilde p_5(t,\epsilon)\, g_3(t,\epsilon)  \\
&\quad
+ 2\Big(
\tilde p_6(t,\epsilon)\, g_2(t,\epsilon)
+ \tilde p_7(t,\epsilon)\, g_3(t,\epsilon)
\Big) \\
&\quad
+ 2\Big(
\frac{2(1 - 3\epsilon) c_{1}}{\epsilon d^{2} + d c_{1}}\, \tilde p_4(t,\epsilon)
- \frac{2(1 - 3\epsilon)(c_{1} + d)}{\epsilon d^{2} + d c_{1}}\, \tilde p_5(t,\epsilon)
\Big)\,\tilde p_6(t,\epsilon),
\\
-\dot{\tilde p}_7(t,\epsilon)
&= -c_1\, g_3(t,\epsilon)\, g_7(t,\epsilon)
+ c_2\, g_3^2(t,\epsilon)
+ 2\tilde p_5(t,\epsilon)\, g_3(t,\epsilon)
+ 2\tilde p_7(t,\epsilon)\, g_2(t,\epsilon) \\
&\quad
+ 2 \Big(
\frac{2(1 - 2\epsilon) c_{1}}{\epsilon d^{2} + d c_{1}}\, \tilde p_4(t,\epsilon)
- \frac{2(1 - 2\epsilon)(c_{1} + d)}{\epsilon d^{2} + d c_{1}}\,  \tilde p_5(t,\epsilon)
\Big)\tilde p_6(t,\epsilon).
\end{aligned}
\end{cases}
\end{align}
Here $g_i(t,\epsilon)$, $i= 1, \ldots, 8$, are defined in Theorem~\ref{Homo_case} and admit the following expansions in powers of $\epsilon$:
\begin{align*}
g_1(t,\epsilon)
&= \Big(1-\frac{2\tilde p_2^{(0)}(t)}{c_1}\Big)\epsilon + \cO(\epsilon^2),
\\
g_2(t,\epsilon)
&= \frac{2}{d}\big(\tilde p_5^{(0)}(t)-\tilde p_4^{(0)}(t)\big)
+\Big(
\frac{2}{d}\Big(\tilde p_5^{(1)}(t)-\tilde p_4^{(1)}(t)\Big)
-\frac{2}{d}\Big(\tilde p_5^{(0)}(t)-\tilde p_4^{(0)}(t)\Big)
-\frac{2}{c_1}\tilde p_5^{(0)}(t)
\Big)\epsilon
+ \cO(\epsilon^2),
\\
g_3(t,\epsilon)
&= \Big(\frac{2\tilde p_4^{(0)}(t)}{d}- \frac{2(c_1+d)}{dc_1}\tilde p_5^{(0)}(t)\Big)\epsilon
\\
&\quad
+\Big(
\frac{2\tilde p_4^{(1)}(t)}{d}
-\frac{2(c_1+d)\tilde p_5^{(1)}(t)}{dc_1}
-\Big(\frac{2\tilde p_4^{(0)}(t)}{c_1}-\frac{2(c_1+d)\tilde p_5^{(0)}(t)}{c_1^2}\Big)
\Big)\epsilon^2
+ \cO(\epsilon^3),
\\
g_4(t,\epsilon)
&= -\Big(\frac{1}{c_1}\tilde r_2^{(0)}(t) + \frac{\bar{P}}{c_1}\Big)\epsilon + \cO(\epsilon^2),
\\
g_5(t,\epsilon)
&= \frac{2}{c_1}\tilde p_2^{(0)}(t)
+\Big(\frac{2\tilde p_2^{(1)}(t)}{c_1}
- \frac{2d}{c_1^2}\tilde p_2^{(0)}(t)
+\frac{d}{c_1}\Big)\epsilon
+ \cO(\epsilon^2),
\\
g_6(t,\epsilon)
&= \frac{2\tilde p_5^{(0)}(t)}{c_1}
+\Big(\frac{2\tilde p_4^{(0)}(t)}{c_1}
+\frac{2\tilde p_5^{(1)}(t)}{c_1}
- \frac{2(d+c_1)}{c_1^2}\tilde p_5^{(0)}(t)\Big)\epsilon
+ \cO(\epsilon^2),
\\
g_7(t,\epsilon)
&= \frac{2\tilde p_5^{(0)}(t)}{c_1}
+\Big(\frac{2\tilde p_4^{(0)}(t)}{c_1}
+ \frac{2\tilde p_5^{(1)}(t)}{c_1}
- \frac{2(d+c_1)}{c_1^2}\tilde p_5^{(0)}(t)\Big)\epsilon
+ \cO(\epsilon^2),
\\
g_8(t,\epsilon)
&= \Big(\frac{\tilde r_2^{(0)}(t)}{c_1}+\frac{\bar{P}}{c_1}\Big)
-\Big(\frac{d\tilde r_2^{(0)}(t)}{c_1^2}
+\frac{d\bar{P}}{c_1^2}
-\frac{\tilde r_2^{(1)}(t)}{c_1}\Big)\epsilon
+ \cO(\epsilon^2).
\end{align*}

We next expand $\tilde p_i(t, \epsilon)$ according to \eqref{p_expansion}. Collecting terms of $\mathcal{O}(1)$ and matching the terminal conditions~\eqref{homo_terminal} gives the following ODE system for the leading-order terms $\tilde p_i^{(0)}$, $i = 4, 5, 6, 7$:
    \begin{align*}
\left\{
\begin{aligned}
-\dot{\tilde p}_4^{(0)}(t)
&=
-\frac{4}{d}\big(\tilde p_5^{(0)}(t)-\tilde p_4^{(0)}(t)\big)\tilde p_5^{(0)}(t)
+ c_2\Big(\frac{2}{d}(\tilde p_5^{(0)}(t)-\tilde p_4^{(0)}(t))\Big)^2 
+ c_3
-\frac{4}{d}\big(\tilde p_4^{(0)}(t)\big)^2\\
&\quad
-\frac{4(c_1+d)}{dc_1}\big(\tilde p_5^{(0)}(t)\big)^2,
\\
-\dot{\tilde p}_5^{(0)}(t)
&=
-\frac{2}{c_1}\big(\tilde p_5^{(0)}(t)\big)^2
+\frac{2}{d}\tilde p_4^{(0)}(t)\tilde p_6^{(0)}(t)
-\frac{2(c_1+d)}{dc_1}\tilde p_5^{(0)}(t)\tilde p_6^{(0)}(t),
\\
-\dot{\tilde p}_6^{(0)}(t)
&=
-\frac{4}{c_1}\tilde p_5^{(0)}(t)\tilde p_6^{(0)}(t),
\\
-\dot{\tilde p}_7^{(0)}(t)
&=
\frac{4}{d}\tilde p_7^{(0)}(t)\big(\tilde p_5^{(0)}(t)-\tilde p_4^{(0)}(t)\big)
+\frac{4}{d}\tilde p_4^{(0)}(t)\tilde p_6^{(0)}(t)
-\frac{4(c_1+d)}{dc_1}\tilde p_5^{(0)}(t)\tilde p_6^{(0)}(t),
\end{aligned}
\right.
\end{align*}
with terminal conditions $\tilde  p_4^{(0)}(T) = c_4$ and $\tilde p_5^{(0)}(T) =\tilde p_6^{(0)}(T) = \tilde p_7^{(0)}(T) = 0$.

By standard existence and uniqueness theory for linear ODE, we immediately obtain $\tilde p_6^{(0)}(t) \equiv 0$, then $\tilde p_5^{(0)}(t) \equiv 0$, and finally $\tilde p_7^{(0)}(t) \equiv 0$. Substituting these into the first equation reduces the system to the Riccati equation for  $\tilde p_4^{(0)}(t)$
\begin{equation}\label{p_4^{0}}
    \dot{\tilde p}_4^{(0)}(t) = \Big(\frac{4}{d}-\frac{4c_2}{d^2}\Big)(\tilde p_4^{(0)}(t))^2 - c_3, \quad 
    \tilde p_4^{(0)}(T) = c_4, 
\end{equation}
whose explicit solution is given in \eqref{p4^0 sol}.

Next, collecting terms of $\mathcal{O}(\epsilon)$ in \eqref{homo_p_eps} yields the following coupled ODE system for first-order corrections $\tilde p_i^{(1)}(t)$, $i = 4, 5, 6, 7$:
\begin{align*}
\left\{
\begin{aligned}
-\dot{\tilde p}_4^{(1)}(t)
&=
\Big(\frac{8}{d}-\frac{8c_2}{d^2}\Big)\big(\tilde p_4^{(0)}(t)\big)^2
+ \Big(\frac{12}{d}-\frac{8c_2}{d^2}\Big)
  \tilde p_4^{(0)}(t)\tilde p_5^{(1)}(t) 
+ \Big(\frac{8}{d}-\frac{8c_2}{d^2}\Big)
  \tilde p_4^{(0)}(t)\tilde p_4^{(1)}(t),
\\
-\dot{\tilde p}_5^{(1)}(t)
&=
\Big(\frac{4}{d}-\frac{4c_2}{d^2}\Big)\big(\tilde p_4^{(0)}(t)\big)^2
+ \frac{2}{d}\tilde p_4^{(0)}(t)\tilde p_6^{(1)}(t),
\\[1ex]
-\dot{\tilde p}_6^{(1)}(t)
&= 0,
\\[1ex]
-\dot{\tilde p}_7^{(1)}(t)
&=
\frac{4}{d}\tilde p_4^{(0)}(t)\tilde p_6^{(1)}(t)
- \frac{4}{d}\tilde p_7^{(1)}(t)\tilde p_4^{(0)}(t),
\end{aligned}
\right.
\end{align*}
with terminal conditions $\tilde p_4^{(1)}(T)  = \tilde p_5^{(1)}(T) = \tilde p_6^{(1)}(T) = 0 =  \tilde p_7^{(1)}(T) \equiv 0$. 

Using the same argument as for the leading-order system, we deduce $\tilde p_6^{(1)}(t) \equiv 0$, and then $\tilde p_7^{(1)}(t) \equiv 0$. Consequently, the system reduces to the couple equations for $\tilde p_4^{(1)}(t)$ and $\tilde p_5^{(1)}(t)$ givein in \eqref{p1_sys_p4}, which can be solved either numerically or analytically. In particular, combining equations~\eqref{p_4^{0}} with \eqref{p1_sys_p4} shows that the ODE for $\tilde p_5^{(1)}(t)$ simplifies to $\dot{\tilde{p}}_5^{(1)}(t) = -\dot{\tilde p}_4^{(0)}(t)-c_3$. Integrating from $t$ to $T$ and using the terminal conditions $\tilde p_4^{(0)}(T) = c_4$ and $\tilde p_5^{(1)}(T) = 0$, we obtain
\begin{equation}\label{p_4p_5}
    \tilde p_4^{(0)}(t) + \tilde p_5^{(1)}(t) = c_3(T-t)+c_4.
\end{equation}

We next consider the ODEs for the leading-order terms $\tilde p^{(0)}_2$ and $\tilde p^{(0)}_3$, obtained by collecting $\mathcal{O}(1)$ terms in \eqref{homo_p_eps}:
\begin{align*}
\left\{\begin{aligned} 
-\dot{\tilde{p}}_3^{(0)}(t) &= -\kappa \tilde p_3^{(0)}(t) - \frac{2}{c_1}\tilde p_3^{(0)}(t)\tilde p_5^{(0)}(t) + \tilde p_6^{(0)}(t) - \frac{2}{c_1}\tilde p_2^{(0)}(t)\tilde p_6^{(0)}(t)\\
& \quad + (\tilde p_5^{(0)}(t)+\tilde p_7^{(0)}(t) +\tilde p_6^{(1)}(t))b(t), \\
  -\dot{\tilde{p}}_2^{(0)}(t) &= -\kappa \tilde p_2^{(0)}(t) + \frac{1}{d} \tilde p_3^{(0)}(t)\tilde p_4^{(0)}(t) - \frac{c_1+d}{dc_1}\tilde p_3^{(0)}(t)\tilde p_5^{(0)}(t) + \tilde p_5^{(0)}(t) - \frac{2}{c_1}\tilde p_2^{(0)}(t)\tilde p_5^{(0)}(t) \\
    &\quad + (\tilde p_4^{(0)}(t)+\tilde p_5^{(1)}(t))b(t), 
\end{aligned}\right.
\end{align*}
with terminal conditions $\tilde p_2^{(0)}(T) = \tilde p_3^{(0)}(T) \equiv 0$.
Terms such as $\frac{2(1-2\epsilon)}{\epsilon}\tilde p_6^{(0)}(t)$ and $\frac{2(1-\epsilon)}{\epsilon}\tilde p_5^{(0)}(t)b(t)$ vanish because we have already shown that $\tilde p_5^{(0)}(t) = \tilde p_6^{(0)}(t) \equiv 0$. Moreover, using the fact $\tilde p_5^{(0)}(t)=\tilde p_6^{(0)}(t)=\tilde p_6^{(1)}(t)=\tilde p_7^{(0)}(t) \equiv 0$, we easily conclude $\tilde p_3^{(0)}(t) = 0$. Finally, using \eqref{p_4p_5}, the ODE for $\tilde p_2^{(0)}(t)$ simplifies to
\begin{align*}
    \dot{\tilde{p}}_2^{(0)}(t) = \kappa \tilde p_2^{(0)}(t) - (c_3(T-t)+c_4)b(t),
\end{align*}
whose solution is given in \eqref{p2^0 sol}. 

When rewriting \eqref{homo_ode_system} in terms of $\epsilon = 1/N$, the subsystem for $\tilde r_i(t,\epsilon)$ becomes:
\begin{align}\label{homo_r_eps}
\left\{
\begin{aligned}
-\dot{\tilde r}_1(t,\epsilon)
&=
\bar{P}\, g_1(t,\epsilon)
- c_1 \big(
g_4(t,\epsilon) g_5(t,\epsilon)
+ g_1(t,\epsilon) g_8(t,\epsilon)
\big)
+ 2c_2\, g_1(t,\epsilon) g_4(t,\epsilon)
\\
&\quad
+ \big(
2\kappa\, \tilde p_1(t,\epsilon)\theta(t)
- \kappa\, \tilde r_1(t,\epsilon)
\big)
+ 2\tilde p_2(t,\epsilon)\big(g_4(t,\epsilon)+a(t)\big)
\\
&\quad
- 2\Big(
\frac{1-\epsilon}{\epsilon d+c_1}\tilde r_2(t,\epsilon)
+ \frac{\bar{P}(1-\epsilon)}{\epsilon d+c_1}
\Big)\tilde p_3(t,\epsilon)
+ \frac{2(1-\epsilon)}{\epsilon}\tilde p_3(t,\epsilon)a(t)
\\
&\quad
+ \tilde r_2(t,\epsilon)\big(g_1(t,\epsilon)+b(t)\big)
+ \Big(
\frac{(1-\epsilon)c_1}{\epsilon d+c_1}
+ \frac{2\epsilon-2}{\epsilon d+c_1}\tilde p_2(t,\epsilon)
+ \frac{1-\epsilon}{\epsilon}b(t)
\Big)\tilde r_3(t,\epsilon),
\\
-\dot{\tilde r}_2(t,\epsilon)
&=
\bar{P}\, g_2(t,\epsilon)
- c_1 \big(
g_4(t,\epsilon) g_6(t,\epsilon)
+ g_2(t,\epsilon) g_8(t,\epsilon)
\big)
+ 2c_2\, g_2(t,\epsilon) g_4(t,\epsilon)
- 2c_3\, \zeta(t)
\\
&\quad
+ 2\kappa\, \tilde p_2(t,\epsilon)\theta(t,\epsilon)
+ 2\tilde p_4(t,\epsilon)\big(g_4(t,\epsilon)+a(t)\big)
\\
&\quad
- 2\Big(
\frac{1-\epsilon}{\epsilon d+c_1}\tilde r_2(t,\epsilon)
+ \frac{\bar{P}(1-\epsilon)}{\epsilon d+c_1}
\Big)\tilde p_5(t,\epsilon)
+ \frac{2(1-\epsilon)}{\epsilon}\tilde p_5(t,\epsilon)a(t)
\\
&\quad
+ \tilde r_2(t,\epsilon) g_2(t,\epsilon)
+ \Big(
\frac{2(1-\epsilon)c_1}{\epsilon d^2+dc_1}\tilde p_4(t,\epsilon)
- \frac{2(1-\epsilon)(c_1+d)}{\epsilon d^2+dc_1}\tilde p_5(t,\epsilon)
\Big)\tilde r_3(t,\epsilon),
\\
-\dot{\tilde r}_3(t,\epsilon)
&=
\bar{P}\, g_3(t,\epsilon)
- c_1 \big(
g_4(t,\epsilon) g_7(t,\epsilon)
+ g_3(t,\epsilon) g_8(t,\epsilon)
\big)
+ 2c_2\, g_3(t,\epsilon) g_4(t,\epsilon) \\
&\quad
+ 2\kappa\, \tilde p_3(t,\epsilon)\theta(t)
+ \big(g_4(t,\epsilon)+a(t)\big)
  \big(2\tilde p_5(t,\epsilon)+2\tilde p_7(t,\epsilon)\big)
\\
&\quad
- 2\Big(
\frac{1-2\epsilon}{\epsilon d+c_1}\tilde r_2(t,\epsilon)
+ \frac{\bar{P}(1-2\epsilon)}{\epsilon d+c_1}
\Big)\tilde p_6(t,\epsilon)
+ \frac{2(1-2\epsilon)}{\epsilon}\tilde p_6(t,\epsilon)a(t)
\\
&\quad
+ \tilde r_2(t,\epsilon) g_3(t,\epsilon)
+ \tilde r_3(t,\epsilon) g_2(t,\epsilon)
\\
&\quad
+ \Big(
\frac{2(1-2\epsilon)c_1}{\epsilon d^2+dc_1}\tilde p_4(t,\epsilon)
- \frac{2(1-2\epsilon)(c_1+d)}{\epsilon d^2+dc_1}\tilde p_5(t,\epsilon)
\Big)\tilde r_3(t,\epsilon).
\end{aligned}
\right.
\end{align}
The  expansion and coefficient-matching procedure is analogous to that for the $\tilde p_i(t,\epsilon)$ system. At the leading order, the equation for $\tilde r_3^{(0)}(t)$ reduces $-\dot{\tilde{r}}_3^{(0)}(t) = 0$ with terminal condition $\tilde r_3^{(0)}(T) = 0$, and hence $\tilde r_3^{(0)}(t) \equiv 0$.
For $\tilde r_2^{(0)}(t)$, we have 
\begin{align*}
    -\dot{\tilde{r}}_2^{(0)}(t) &= -2c_3\zeta(t) + 2\kappa\tilde p_2^{(0)}(t)\theta(t) + 2(\tilde p_5^{(1)}(t)+\tilde p_4^{(0)}(t))a(t) + \frac{2}{d}\tilde p_4^{(0)}(t)\tilde r_3^{(0)}(t) \\
    &= -2c_3\zeta(t) + 2\kappa\tilde p_2^{(0)}(t)\theta(t) + 2(c_3(T-t)+c_4)a(t),
\end{align*}
with terminal condition $\tilde r_2^{(0)}(T) = -2c_4\zeta(T)$, where we have used $\tilde r_3^{(0)}(t) \equiv 0$ and the relation~\eqref{p_4p_5}.
Solving this ODE and substituting $\tilde p_2^{(0)}(t)$ from \eqref{p2^0 sol} yields \eqref{r_2^0 sol}. 
\end{proof}

\begin{proof}[Proof of Theorem \ref{regional_control}]
    Using the expansions of $g_i(t,\epsilon)$, $i=1,2,3,4$, together with \eqref{homo_optimal control}, the equilibrium control function $\hat{\alpha}^{i,\epsilon}(t,q,\bs)$ admits the expansion
\begin{align*}
\hat{\alpha}^{i,\epsilon}(t,q,\bs)
&=
\epsilon
\Big(1-\frac{2\tilde p_2^{(0)}(t)}{c_1}\Big) q
+ \frac{2}{d}\tilde p_4^{(0)}(t)\big(\bar s-s^i\big)
+ \epsilon\,
\frac{2}{d}\big(\tilde p_5^{(1)}(t)-\tilde p_4^{(1)}(t)\big)s^i
- \epsilon\Big(\frac{1}{c_1}\tilde r_2^{(0)}(t)+\frac{\bar{P}}{c_1}\Big)
\\
&\quad
+ \epsilon^2
\Big(
\frac{2}{d}\tilde p_4^{(1)}(t)
- \frac{2(c_1+d)}{dc_1}\tilde p_5^{(1)}(t)
- \frac{2}{c_1}\tilde p_4^{(0)}(t)
\Big)\mathbf 1_{N-1}^{\top}\bs^{-i}
+ \cO(\epsilon^2).
\end{align*}
where we use the equality $\epsilon\mathbf{1}_{N-1}^{\top} \bs^{-i} = \bar s - \epsilon s^i$ since $\epsilon = \frac{1}{N}$. This gives \eqref{regional_optimal} after noticing $\hat\alpha_t^{i,\epsilon} = \hat\alpha^{i,\epsilon}(t, Q_t, \hat \bS_t)$.

To derive the aggregate control function, we sum  \eqref{regional_optimal} over $i = 1, \ldots, N$. Using $\sum_{i=1}^N \epsilon q = q$ and $\sum_{i=1}^N s^i = N \bar s$, we obtain 
  \begin{align*}
\sum_{i=1}^N \hat\alpha^{i,\epsilon}(t,q,\bs)
&=
\sum_{i=1}^N \Big(1-\frac{2\tilde p_2^{(0)}(t)}{c_1}\Big)\epsilon q
+ \Big(
-\frac{2}{d}\tilde p_4^{(0)}(t)
+ \big(
\frac{2}{d}\big(\tilde p_5^{(1)}(t)-\tilde p_4^{(1)}(t)\big)
+ \frac{2}{d}\tilde p_4^{(0)}(t)
\big)\epsilon
\Big)s^{i}
\nonumber\\
&\quad \qquad
+ \Big(
\frac{2}{d}\tilde p_4^{(0)}(t)\epsilon
+ \Big(
\frac{2}{d}\tilde p_4^{(1)}(t)
- \frac{2(c_{1}+d)}{d c_{1}}\tilde p_5^{(1)}(t)
- \frac{2}{c_{1}}\tilde p_4^{(0)}(t)
\Big)\epsilon^{2}
\Big)\mathbf{1}^{\top}\bs^{-i} \nonumber\\
&\quad \qquad - \Big(\frac{1}{c_{1}}\tilde r_2^{(0)}(t)+\frac{c_{0}}{c_{1}}\Big)\epsilon
+ \cO(\epsilon^2)
\nonumber\\
&=
\Big(1-\frac{2\tilde p_2^{(0)}(t)}{c_1}\Big)q
+ \frac{2}{d}\big(\tilde p_5^{(1)}(t)-\tilde p_4^{(1)}(t)\big)\bar{s}
\nonumber\\
&\quad
+ \sum_{i=1}^{N}
\Big(
\frac{2}{d}\tilde p_4^{(1)}(t)
- \frac{2(c_{1}+d)}{d c_{1}}\tilde p_5^{(1)}(t)
- \frac{2}{c_{1}}\tilde p_4^{(0)}(t)
\Big)\epsilon^{2}
\big(\sum_{j=1}^{N} s_{j} - s_{i}\big) \nonumber\\
&\quad
- \Big(\frac{1}{c_{1}}\tilde r_2^{(0)}(t)+\frac{c_{0}}{c_{1}}\Big)
+ \cO(\epsilon)
\nonumber\\
&=
\Big(1-\frac{2\tilde p_2^{(0)}(t)}{c_1}\Big)q
+ \frac{2}{d}\big(\tilde p_5^{(1)}(t)-\tilde p_4^{(1)}(t)\big)\bar{s}
+ \Big(
\frac{2}{d}\tilde p_4^{(1)}(t)
- \frac{2(c_{1}+d)}{d c_{1}}\tilde p_5^{(1)}(t)
- \frac{2}{c_{1}}\tilde p_4^{(0)}(t)
\Big)\bar{s}
\nonumber\\
&\quad
- \Big(\frac{1}{c_{1}}\tilde r_2^{(0)}(t)+\frac{c_{0}}{c_{1}}\Big)
+ \cO(\epsilon)
\nonumber\\
&=
\Big(1-\frac{2\tilde p_2^{(0)}(t)}{c_1}\Big)q
- \frac{2}{c_{1}}\big(\tilde p_5^{(1)}(t)+\tilde p_4^{(0)}(t)\big)\bar{s}
- \Big(\frac{1}{c_{1}}\tilde r_2^{(0)}(t)+\frac{c_{0}}{c_{1}}\Big)
+ \cO(\epsilon),
\end{align*}
which yields \eqref{agg_optimal}.
\end{proof}

\begin{proof}[Proof of Theorem \ref{bar_S_t_limit}]
By the definition of $S_t^i$ and Theorem~\ref{regional_control}, the averaged SOC process $\bar S_t$ satisfies the SDE 
\begin{equation}\label{bar S_t}
d\bar S_t
=
\big(
a(t)
+ b(t) Q_t
+ \frac{1}{N} G(t,Q_t,\bar S_t) 
+ R_N
\big)\, dt
+ \sigma\rho\, dW_t^0
+ \frac{\sigma}{\sqrt N}\sqrt{1-\rho^2}\, d\overline W_t,
\end{equation}
with initial condition $\bar S_0 = S_0$. Here $\overline W_t$ is a standard Brownian motion independent of $W_t^0$, and $G(t,q ,\bar s)
=
\big(1-\frac{2\tilde p_2^{(0)}(t)}{c_1}\big)q
- \frac{2}{c_1}\big(c_3(T-t)+c_4\big)\bar s
- \big(\frac{1}{c_1}\tilde r_2^{(0)}(t)+\frac{\bar{P}}{c_1}\big)$ represents the leading-order contribution of the aggregate control. The remainder term $R_N \sim \cO(\frac{1}{N^2})$. 

We next introduce the limiting process $\bar S_t^*$ defined by
\begin{align}\label{bar S_t star}
    d\bar S_t^* = (a(t) + b(t)Q_t)\,dt + \sigma\rho\,dW_t^0, \qquad \bar S_0^* = S_0.
\end{align}
Our goal is to prove that, for any fixed $T$, there exists a constant $C_T>0$ independent of $N$, such that $\sup_{0\leq t \leq T}\mathbb{E}\big[|\bar S_t - \bar S_t^*|^2\big] \leq \frac{C_T}{N}$.

First, observe that the coefficients for the coupled system for $Q_t, \bar S_t, \bar S_t^\ast$ (cf. \eqref{Q process}, \eqref{S process} and \eqref{bar S_t star}) are global Lipchitz with linear growth, the system admits a unique strong solution. Standard moment estimates then yield the uniform second-moment bound
\begin{equation}\label{second moment bound}
    \sup_{N\geq1}\sup_{0\leq t \leq T} \mathbb{E}[|Q_t|^2+ |\bar S_t|^2 + |\bar S_t^\ast|^2] < \infty.
\end{equation}

Define the difference process $X^N_t = \bar S_t - \bar S_t^*$. Subtracting \eqref{bar S_t star} from \eqref{bar S_t}, we obtain that $X^N_t$ satisfies
\begin{equation*}
dX_t^N
=
\Big[
\frac{1}{N}G(t,Q_t,\bar S_t)
+ R_N
\Big]\,dt
+ \frac{\sigma}{\sqrt N}\sqrt{1-\rho^2}\, d\overline W_t, \quad X_0^N = 0.
\end{equation*}
By the linear growth of $G$ in $(Q_t,\bar S_t)$ and the estimate $R_N \sim \mathcal{O}\big(\frac{1}{N^2}\big)$, there exist constants $C_G, C_R>0$ such that
\begin{equation}\label{X^N_t drift bound}
\Big|
\frac{1}{N}G(t,Q_t,\bar S_t)+R_N
\Big|
\leq
\frac{1}{N}\big|G(t,Q_t,\bar S_t)\big|
+ |R_N|
\leq
\frac{C_G}{N}\big(1+|Q_t|+|\bar S_t|\big)
+ \frac{C_R}{N^2}.
\end{equation}
Applying Ito's formula to $(X_t^N)^2$ gives
\begin{equation*}
d(X_t^N)^2
=
\Big(
2X_t^N\Big(\frac{1}{N}G(t,Q_t,\bar S_t)+R_N\Big)
+ \frac{\sigma^2(1-\rho^2)}{N}
\Big)\, dt
+ 2X_t^N\frac{\sigma\sqrt{1-\rho^2}}{\sqrt N}\, d\overline W_t.
\end{equation*}
Integrating from 0 to $t$, using $X_t^N = 0$ and taking expectations on both sides yields
\begin{equation*}
\mathbb{E}\big[(X_t^N)^2\big] =
\int_0^t
\mathbb{E}\Big[
2X_s^N\Big(\frac{1}{N}G(s,Q_s,\bar S_s)+R_N\Big)
+ \frac{\sigma^2(1-\rho^2)}{N}
\Big]\, ds,
\end{equation*}
where the It\^o integral term varnishes since $X_t^N$ is squared integrable on $[0,T]$ which follows from \eqref{second moment bound}. 

We now estimate the drift term $2X_s^N\big(\frac{1}{N}G(s,Q_s,\bar S_s) + R_N\big)$. Using Young's inequality $2ab \leq \frac{1}{2}a^2+2b^2$, together with \eqref{X^N_t drift bound}, we deduce
\begin{align*}
2\Big|
X_s^N\Big(\frac{1}{N}G(s,Q_s,\bar S_s)+R_N\Big)
\Big|
&\leq
2|X_s^N|
\Big(
\frac{C_G}{N}\big(1+|Q_s|+|\bar S_s^N|\big)
+ \frac{C_R}{N^2}
\Big)
\\
&\leq
\frac{1}{2}|X_s^N|^2
+ \frac{4C_G^2}{N^2}\big(1+|Q_s|+|\bar S_s^N|\big)^2
+ \frac{4C_R^2}{N^4}. 
\end{align*}
Using the uniform second-moment bound \eqref{second moment bound}, there exists a constant \(C_1>0\) such that,
\begin{equation*}
\mathbb{E}\Big[
2X_s^N\Big(\frac{1}{N}G(s,Q_s,\bar S_s)+R_N\Big)
\Big]
\leq
\frac{1}{2}\mathbb{E}\big[|X_s^N|^2\big]
+ \frac{C_1}{N^2}.
\end{equation*}
Consequently, for some constant $C_2 > 0$,
\begin{equation*}
\mathbb{E}\big[(X_t^N)^2\big]
\leq
\frac{1}{2}\int_0^t \mathbb{E}\big[(X_s^N)^2\big]\, ds
+ \frac{C_1 t}{N^2}
+ \frac{\sigma^2(1-\rho^2)t}{N}\leq
\frac{1}{2}\int_0^t \mathbb{E}\big[(X_s^N)^2\big]\, ds
+ \frac{C_2 T}{N}.
\end{equation*}
since $t \le T$ and $N \ge 1$. By Gronwall's inequality, for any $0\leq t \leq T$,
\begin{align*}
   \mathbb{E}[(X_t^N)^2] &\leq \frac{C_2 T e^{t/2}}{N} \leq \frac{C_2 T e^{T/2}}{N},
\end{align*}
Setting $C_T = C_2 T e^{T/2}$ completes the proof.
\end{proof}

\end{document}